\newcommand{\e}{\varepsilon}
\newcommand{\eps}{\varepsilon}
\newcommand{\R}{\mathbb{R}}
\newcommand{\RN}{{\mathbb{R}^N}}
\newcommand{\de}{\partial}
\renewcommand{\le}{\leslant}
\renewcommand{\ge}{\geslant}
\newcommand{\g }{\gamma }
\newcommand{\n }{\nabla }
\renewcommand{\O}{\Omega}
\newcommand{\G}{\Gamma}
\renewcommand{\H}{H^1(\RN)}
\newcommand{\Hr}{H^1_r(\RN)}
\renewcommand{\P}{\mathcal{P}}
\newcommand{\N}{\mathbb{N}}
\newcommand{\D }{{\mathcal D}^{1,2}(\RN)}
\newcommand{\irn }{\int_{\RN}}
\def\bbm[#1]{\mbox{\boldmath $#1$}}
\newcommand{\beq }{\begin{equation}}
\newcommand{\eeq }{\end{equation}}
\newcommand{\weakto}{\rightharpoonup}
\newcommand{\CS}{\mathcal{S}_{\rm rad}}
\renewcommand{\le}{\leqslant}
\renewcommand{\ge}{\geqslant}
\newcommand{\dis}{\displaystyle}
\newcommand{\Div}{\,\mathrm{div}}
\newcommand{\ef}{\eqref}
\newcommand{\quas}{\phi \left(\frac{u^2+|\n u|^2 }{2}\right)}
\newcommand{\Quas}{\Phi \left(\frac{u^2+|\n u|^2 }{2}\right)}
\newcommand{\quasd}{\phi' \left(\frac{u^2+|\n u|^2 }{2}\right)}
\newcommand{\quasn}{\phi \left(\frac{u_n^2+|\n u_n|^2 }{2}\right)}
\newcommand{\quasb}{\phi \left(\frac{s^2+b^2 }{2}\right)}
\newcommand{\Quasb}{\Phi \left(\frac{s^2+b^2 }{2}\right)}
\newcommand{\quasbd}{\phi' \left(\frac{s^2+b^2 }{2}\right)}
\newcommand{\quasbdd}{\phi'' \left(\frac{s^2+b^2 }{2}\right)}
\newcommand{\quasi}{\phi \left(\frac{u^2+|\bp|^2 }{2}\right)}
\newcommand{\quasid}{\phi' \left(\frac{u^2+|\bp|^2 }{2}\right)}
\newcommand{\quasidd}{\phi'' \left(\frac{u^2+|\bp|^2 }{2}\right)}
\newcommand{\bp}{\bm{p}}
\providecommand\@dotsep{5}
\def\listtodoname{List of Todos}
\def\listoftodos{\@starttoc{tdo}\listtodoname}
\newtheorem{theorem}{Theorem}[section]
\newtheorem{lemma}[theorem]{Lemma}
\newtheorem{remark}[theorem]{Remark}
\newtheorem{proposition}[theorem]{Proposition}
\numberwithin{equation}{section}
\title
[
Quasilinear scalar field equations
]
{
Ground state solutions 
for quasilinear scalar field equations \\arising in nonlinear optics
}
\author[A. Pomponio]{Alessio Pomponio}
\author[T. Watanabe]{Tatsuya Watanabe}
\address[A. Pomponio]{\newline\indent
Dipartimento di Meccanica, Matematica e Management
\newline\indent 
Politecnico di Bari
\newline\indent
Via Orabona 4,  70125  Bari, Italy}
\email{alessio.pomponio@poliba.it}
\address[T. Watanabe]{\newline\indent 
Department of Mathematics, 
\newline\indent 
Faculty of Science, Kyoto Sangyo University,
\newline\indent
Motoyama, Kamigamo, Kita-ku, Kyoto-City, 603-8555, Japan}
\email{tatsuw@cc.kyoto-su.ac.jp}
\thanks{}
\subjclass[2010]{35J62, 35J20, 35Q60}
\date{}
\keywords{Quasilinear elliptic equation, variational method, monotone operator}
\begin{document}

\begin{abstract}
In this paper, we study a class of quasilinear elliptic equations 
which appears in nonlinear optics.
By using the mountain pass theorem together with a 
technique of adding one dimension of space \cite{HIT,jj}, 
we prove the existence of a non-trivial weak solution
for general nonlinear terms of Berestycki-Lions' type.
The existence of a radial ground state solution and 
a ground state solution is also established under
stronger assumptions on the quasilinear term.
\end{abstract}

\maketitle

\section{Introduction}
In this paper, we study the following quasilinear elliptic problem:
\begin{equation} \label{eq:1} 
\begin{cases}
\dis -\Div \left\{ \phi \left(\frac{u^2+|\n u|^2 }{2}\right) \n u\right\}
+\phi \left(\frac{u^2+|\n u|^2 }{2}\right)u = g(u) & \hbox{  in }\RN, \\
u(x)\to 0& \hbox{ as }|x|\to +\infty,
\end{cases}
\end{equation}
where $N\ge 3$. 
The purpose of this paper is to establish the existence of
nontrivial solutions and ground states solutions of \ef{eq:1} for general nonlinear term $g$
by using the variational method. 

On the map $\phi :[0,+\infty)\to \R$, we assume:
\begin{enumerate}[label=($\phi$\arabic{*}),ref=$\phi$\arabic{*}]
\item\label{phi1} $\phi \in C\big([0,+\infty)\big)$
and there exist two constants $0< \phi_0 <\phi_1$ such that
\[
\phi_0 \le \phi (s) \le \phi_1 \quad \hbox{for all} \ \ s \in [0,+\infty); 
\]
\item\label{phi2} the map $t \mapsto \Phi( \frac{t^2}{2} )$ 
is strictly convex on $\R$, where $\Phi(s)=\int_0^s \phi(\tau) \,d\tau$. 
\end{enumerate} 
We note that the condition \eqref{phi2} is equivalent to 
\[
\hbox{the map} \ t \mapsto t\phi(t^2) \ \hbox{is increasing on} \ [0,+\infty). 
\]
Typical examples of $\phi(s)$ are given by
\begin{itemize}
\item $\phi(s)=K+(1+s)^{-\alpha}$ with $0 \le \alpha \le \frac{1}{2}$, $K>0$ or $\alpha > \frac{1}{2}$, $K \gg 1$;

\item $\phi(s)=K-(1+s)^{-\alpha}$ with $\alpha \ge 0$, $K>1$;

\item $\phi(s)=K+(1+s)^{-\alpha}-(1+s)^{-\beta}$ with
$0 \le \alpha \le \frac{1}{2}$, $\beta \ge 0$, $K >1$ or
$\alpha > \frac{1}{2}$, $ \beta \ge 0$, $K \gg 1$;

\item $\phi(s)=K+(1+s)^{-\alpha}s^{\beta}$ with $0 \le \beta \le \alpha \le \beta + \frac{1}{2}$,
$K>0$ or $\beta \ge 0$, $\alpha > \beta + \frac{1}{2}$, $K \gg 1$;

\item $\phi(s)=K+\log\big(1+(1+s)^{-\alpha}\big)$ with $\alpha\ge 0 $, $K \gg 1$.
\end{itemize}
In the case $\phi(s)=1+\frac{1}{\sqrt{1+s}}$, the operator
$\Div \{ \phi( |\n u|^2 ) \n u \}$ is exactly the sum of 
the Laplacian $\Delta$ and the mean curvature operator
$\Div \left( \frac{\n u}{\sqrt{1+|\n u|^2}} \right)$. 
General quasilinear elliptic problems of the form:
\[
- \Div \{ \phi( |\n u|^2 ) \n u \} = g(u) \]
have been studied, for example, in \cite{ADP, CGD, DCSG,FLS,FIN,KS}.
Especially the assumption \eqref{phi2} is related with 
so-called $\Delta_2$-condition in the literature.

In our problem \ef{eq:1}, the quasilinear term depends not on $| \n u|^2$
but on $\frac{u^2+| \n u|^2}{2}$.
This particular quasilinear term appears in the study of
a nonlinear optics model describing the propagation of self-trapped beam 
in a cylindrical optical fiber made from a self-focusing dielectric material.
(See \cite{StZ1, StZ2} for the derivation.)
In this model, \eqref{phi2} plays a fundamental role as well.
We also refer to \cite{JR, Stu} for similar problems in a bounded domain. 

On the nonlinearity $g$, we require that
\begin{enumerate} [label=(g\arabic{*}),ref=g\arabic{*}]
\item \label{g1} $g\in C(\R,\R)$ and $g$ is odd;

\item \label{g2} there exists $m\in (-\phi_0, +\infty)$ such that
$$ 
-\infty <\liminf_{s\to 0} \frac{g(s)}{s} 
\le \limsup_{s \to 0} \frac{g(s)}{s} =-m;
$$

\item \label{g3}
denoting by $2^*=\frac{2N}{N-2}$, it holds
\begin{equation*} 
\displaystyle -\infty \le \limsup_{s \to +\infty}
\frac{g(s)}{s^{2^*-1}} \le 0;
\end{equation*}

\item \label{g4} there exists $\zeta>0$ such that $G(\zeta) >\Phi (\frac{\zeta^2}2)$,
where $G(s)=\int_0^{s} g(\tau) \,d\tau $. 
\end{enumerate}

Whenever $m \in (-\phi_0,0)$, instead of \eqref{phi2}, we assume 
\begin{enumerate}[label=($\phi$\arabic{*}'),ref=$\phi$\arabic{*}']
\setcounter{enumi}{1}
\item \label{phi2'} 
the map $t \mapsto \Phi\left( \frac{t^2}{2} \right)+\frac{m}{2}t^2$ 
is strictly convex on $\R$.
\end{enumerate}

The conditions \ef{g1}-\ef{g4} can be seen as a variant of
Berestycki-Lions' condition \cite{BL}
for the semilinear scalar field equation:
\begin{equation} \label{eq:2}
-\Delta u =g(u) \quad \hbox{in} \ \RN.
\end{equation}
Indeed, under additional assumptions on $\phi(s)$, 
one can show that \ef{g3} and \ef{g4} are {\it almost} optimal
for the existence of non-trivial solutions of \ef{eq:1}.
(See Theorem \ref{thm:5.4} below.)
An important feature is that the mass constant $m$ in \ef{g2} can be {\it negative}
because of the presence of a {\em mass term} in $\quas$ by \eqref{phi1}.
Our assumption \ef{g2} means that a total mass is positive so that
our problem \ef{eq:1} is actually the {\it positive mass case}. 

To state our main results, we prepare some notations.
By a {\it weak} solution of \ef{eq:1}, we mean a solution which satisfies \ef{eq:1} 
in the distribution sense,
equivalently, a critical point of the associated functional 
$I: \H \to \R$ defined by 
\begin{equation*}
I(u):= \irn \Phi \left(\frac{u^2+|\n u|^2 }2\right) \,dx - \irn G(u) \,dx.
\end{equation*} 
Our first result is the following one. 

\begin{theorem}\label{main1}
Assume \eqref{g1}-\eqref{g4}, \eqref{phi1} and 
\eqref{phi2} when $m \in [0,+\infty)$, or \eqref{phi2'} when $m \in (-\phi_0,0)$.
Then there exists a non-negative non-trivial weak solution of \eqref{eq:1}.
\end{theorem}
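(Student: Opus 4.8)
The natural strategy is the ``one extra dimension'' device of Hirata--Ikoma--Tanaka and J\'eanjean--J\'eanjean, adapted to the present functional. The obstruction to working directly with $I$ on $\H$ is that the mountain pass geometry is delicate and, more seriously, that Palais--Smale sequences for $I$ need not be bounded in $\H$ because of the quasilinear term; the trick is to add a variable $s\in\R$ and consider the auxiliary functional
\begin{equation*}
J(u,s):= e^{(N-2)s}\irn \Phi\!\left(\frac{u^2+|\n u|^2}{2}\right)dx - e^{Ns}\irn G(u)\,dx
\end{equation*}
on $\H\times\R$, which is, heuristically, $I$ evaluated on the rescaling $u(x/e^{s})$. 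I would first record that critical points of $J$ project to critical points of $I$: the $s$-derivative equation is precisely the Pohozaev identity for \eqref{eq:1}, and the $u$-derivative equation is the weak formulation of \eqref{eq:1} after one undoes the scaling. So it suffices to produce a nontrivial critical point of $J$.

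Next I would set up the mountain pass theorem for $J$ on $\H\times\R$. The local geometry near $(0,0)$ uses \eqref{phi1} (which makes $\Phi(r)\ge\phi_0 r$, giving coercive control of the $H^1$-norm in the quadratic part), together with \eqref{g2} and \eqref{g3}: \eqref{g2} supplies the term $-m$ which, combined with the mass term $\phi_0\|u\|_{L^2}^2/2$ hidden in $\Phi$ and the sign/convexity hypothesis \eqref{phi2} or \eqref{phi2'}, yields strict positivity of the quadratic form $\irn(\Phi(\tfrac{u^2+|\n u|^2}{2}))\,dx$ near $0$ in the relevant norm; \eqref{g3} controls the superlinear growth at infinity so the subtracted term is lower order near $0$. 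This gives $J(u,s)\ge\rho>0$ on a small sphere. The global condition — existence of a point where $J<0$ — comes from \eqref{g4}: using $\zeta$ with $G(\zeta)>\Phi(\zeta^2/2)$ one builds a ``bump'' $u_0$ for which $\irn G(u_0)>\irn\Phi(\tfrac{u_0^2+|\n u_0|^2}{2})$ (a careful Lipschitz cutoff plus the quadratic-energy bound $\Phi(r)\le\phi_1 r$), and then sending $s\to+\infty$ makes the $e^{Ns}$ term dominate, so $J(u_0,s)\to-\infty$.

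Then I would extract a Palais--Smale (in fact Cerami) sequence $(u_n,s_n)$ at the mountain pass level $c$. The key gain of the extra dimension is that the $s$-component of $J'(u_n,s_n)\to0$ forces an almost-Pohozaev identity, and combining it with $J(u_n,s_n)\to c$ bounded yields: first that $s_n$ is bounded (otherwise the two energies would have to balance in a way inconsistent with $c>0$ and \eqref{g4}), and second that $\irn\Phi(\tfrac{u_n^2+|\n u_n|^2}{2})\,dx$ is bounded, hence by \eqref{phi1} that $\{u_n\}$ is bounded in $\H$. After a scaling we may as well take $s_n\to s_\infty$; passing to a subsequence, $u_n\weakto u$ in $\H$. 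To recover strong convergence and rule out $u=0$ I would invoke concentration-compactness (or Lions' lemma): vanishing is excluded because $c>0$ together with the growth control \eqref{g3} would force $\irn G(u_n)\to0$ and hence $c\le0$; dichotomy is excluded by subadditivity of the mountain pass level coming from the scaling structure. This pins down a nontrivial weak limit, and the monotone-operator structure implicit in \eqref{phi2}/\eqref{phi2'} (the map $\bp\mapsto\phi(\tfrac{u^2+|\bp|^2}{2})\bp$ is monotone) upgrades weak to strong convergence of the gradients, so $u$ is a nontrivial critical point of $I$. Finally, replacing $g$ by its odd extension is harmless by \eqref{g1}, and testing the equation against $u^-$ together with \eqref{phi1} shows $u\ge0$, possibly after using the truncated nonlinearity $\bar g(s)=g(s)$ for $s\ge0$, $\bar g(s)=0$ for $s<0$ and noting any solution of the truncated problem is non-negative and solves the original one.

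The main obstacle I expect is the compactness step: establishing that bounded Palais--Smale sequences converge strongly, because the quasilinear principal part is not of the form $\irn|\n u|^2$ and one cannot simply pass to the limit in $\irn\phi(\tfrac{u_n^2+|\n u_n|^2}{2})\n u_n\cdot\n\varphi$. Here the strict convexity \eqref{phi2} (or \eqref{phi2'} in the negative-mass regime) is essential: it makes the operator strictly monotone, so a Minty-type argument, combined with the Brezis--Lieb lemma to handle the nonlinear term under \eqref{g3}, should close the gap. A secondary technical point is the interplay between the possibly negative mass $m$ and the lower bound $\phi_0$ on $\phi$: the hypothesis $m>-\phi_0$ is exactly what keeps the quadratic form positive, and \eqref{phi2'} is the correct replacement for \eqref{phi2} that preserves monotonicity once the mass term is absorbed.
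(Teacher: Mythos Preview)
Your overall strategy---the Hirata--Ikoma--Tanaka augmented functional, then monotone-operator arguments for strong convergence of the gradients---is the paper's strategy. But there is one concrete error and one genuine divergence in the compactness step.

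\textbf{The augmented functional is wrong.} You write
\[
J(u,s)= e^{(N-2)s}\irn \Phi\!\left(\frac{u^2+|\n u|^2}{2}\right)dx - e^{Ns}\irn G(u)\,dx,
\]
but this is \emph{not} $I\big(u(\cdot/e^s)\big)$ for the present problem: rescaling $x\mapsto x/e^s$ replaces $|\n u|^2$ by $e^{-2s}|\n u|^2$ \emph{inside} $\Phi$, so the correct functional (the paper's) is
\[
J(\theta,u)=e^{N\theta}\irn \Phi\!\left(\frac{u^2+e^{-2\theta}|\n u|^2}{2}\right)dx - e^{N\theta}\irn G(u)\,dx.
\]
With your $J$ the $s$-derivative at $s=0$ gives $(N-2)\int\Phi=N\int G$, which is \emph{not} the Pohozaev identity \eqref{eq:5.1}; hence your claim that ``critical points of $J$ project to critical points of $I$'' fails, and the ensuing boundedness argument (combine almost-Pohozaev with almost-energy) does not yield control on $\|\n u_n\|_{L^2}$. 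With the correct $J$ the combination does give $e^{(N-2)\theta_n}\int\phi(\cdots)|\n u_n|^2\to Nc$, as in \eqref{eq:3.12}.

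\textbf{Radial symmetry versus concentration--compactness.} The paper works entirely in $H^1_r(\RN)$ and uses the Strauss compactness lemma (Lemma \ref{lem:3.6}), after the Berestycki--Lions splitting $g=g_1-g_2$, to pass to the limit in the nonlinear terms; this is what makes the strong-convergence step short. Your plan to stay in $\H$ and invoke concentration--compactness is in principle viable since the problem is autonomous, but ``dichotomy is excluded by subadditivity of the mountain pass level coming from the scaling structure'' is not an argument: subadditivity reasoning is native to constrained minimisation, not to mountain pass levels, and making it precise here would require substantial extra work. For the bare existence statement of Theorem \ref{main1} the radial route is both shorter and avoids this difficulty.

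\textbf{Why $\theta_n\to0$ comes for free.} You propose to \emph{prove} $s_n$ bounded from the equations. The paper instead restricts the extra-dimensional paths to satisfy $\theta(0)=\theta(1)=0$, observes that the resulting mountain pass level coincides with the one-variable level $c$ (Lemma \ref{lem:3.3}), and then Ekeland's principle automatically produces a Palais--Smale sequence with $\theta_n\to0$ (Proposition \ref{prop:3.5}). This sidesteps any separate argument about boundedness of the extra variable.
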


Next we consider the existence of a {\it regular} solution of \ef{eq:1}, 
that is, a solution which belongs to the class $C^1(\RN)$
(indeed $C^{1,\sigma}(\RN)$ for some $\sigma \in (0,1)$).
For this purpose, 
we impose the following slightly stronger condition on $\phi(s)$: 
\begin{enumerate}[label=($\phi$\arabic{*}),ref=$\phi$\arabic{*}]
\setcounter{enumi}{2}
\item\label{phi3} $\phi \in C^1\big([0,+\infty)\big)$ and 
there exists $C>0$ such that $s|\phi'(s)| \le C$ for all $s\in [0,+\infty)$. 
Moreover
\[
\phi_0 \le \phi(s) +2s \phi'(s) \quad \hbox{for all} \ \ s \in [0,+\infty).
\]
\end{enumerate} 
We notice that \eqref{phi3} implies \eqref{phi2} and \eqref{phi2'} respectively.
One can see that \eqref{phi3} is fulfilled if
\begin{itemize}
\item $\phi(s)=K+(1+s)^{-\alpha}$, with $0 \le \alpha \le \frac{1}{2}$, $K>0$; 
\item $\phi(s)=K-(1+s)^{-\alpha}$, with $\alpha \ge 0$, $K>1$.
\end{itemize} 

Then we have the following result. 

\begin{theorem}\label{main2}
Suppose that \eqref{phi1}, \eqref{phi3} and \eqref{g1}-\eqref{g4} hold. 
Then there exists a positive regular solution of \eqref{eq:1}, 
namely, a solution which is of the class $C^{1,\sigma}(\RN)$ for some $\sigma \in (0,1)$.
\end{theorem}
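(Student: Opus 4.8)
The plan is to take the weak solution provided by Theorem \ref{main1} and run a regularity bootstrap. Since \eqref{phi3} implies both \eqref{phi2} and \eqref{phi2'}, Theorem \ref{main1} already furnishes a non-negative, non-trivial weak solution $u\in\H$ of \eqref{eq:1}, so it only remains to show that $u\in C^{1,\sigma}_{\mathrm{loc}}(\RN)$ for some $\sigma\in(0,1)$ and that $u>0$. It is convenient to freeze the quasilinear coefficient, writing $a(x):=\quas$, so that $\phi_0\le a(x)\le\phi_1$ a.e.\ by \eqref{phi1} and $u$ solves
\[
-\Div\bigl(a(x)\n u\bigr)+a(x)u=g(u)\qquad\text{in }\RN
\]
in the weak sense; after the usual density argument (and once the bound below is available) the identity holds against every $\varphi\in\H\cap L^\infty(\RN)$ with compact support.

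First I would prove $u\in L^\infty(\RN)$. By \eqref{g2} one has $|g(s)|\le C|s|$ for $|s|$ small, while \eqref{g3} gives, for each $\eps>0$, $g(s)\le\eps|s|^{2^*-1}$ for $|s|$ large; since $g$ is continuous these combine to $|g(s)|\le C_\eps|s|+\eps|s|^{2^*-1}$ for all $s\in\R$. Hence $g(u)/u$ is dominated by $C_\eps+\eps|u|^{2^*-2}$ with $|u|^{2^*-2}\in L^{N/2}(\RN)$, which is exactly the structure required by the Brezis--Kato argument: testing the equation --- whose principal part is uniformly elliptic with bounded measurable coefficient $a$ --- against truncated powers of $u$ and iterating yields $u\in L^q(\RN)$ for all $q\in[2,\infty)$, and a further Moser / De Giorgi iteration gives $u\in L^\infty(\RN)$. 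Then $g(u)\in L^\infty(\RN)$, so $-\Div(a\n u)=g(u)-au\in L^\infty(\RN)$ with $a$ bounded, measurable and uniformly elliptic, and the De Giorgi--Nash--Moser theorem yields $u\in C^{0,\gamma}_{\mathrm{loc}}(\RN)$ for some $\gamma\in(0,1)$; combined with $u(x)\to0$ as $|x|\to\infty$, $u\in C^{0,\gamma}(\RN)$.

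The heart of the matter is the upgrade to $C^{1,\sigma}$. I would now view the equation as $-\Div\A(x,\n u)=g(u)-\quas\,u$ with $\A(x,p):=\phi\bigl(\tfrac{u(x)^2+|p|^2}{2}\bigr)p$. Because $\phi\in C^1([0,\infty))$ has bounded derivative (by \eqref{phi3}, $\phi'$ is continuous and $O(1/s)$ at infinity) and $u\in C^{0,\gamma}(\RN)\cap L^\infty(\RN)$, one gets $|\A(x,p)-\A(y,p)|\le C(1+|p|)|x-y|^\gamma$, while $\A(x,0)=0$ and $|\A(x,p)|\le\phi_1|p|$. The decisive point is uniform ellipticity: with $s=\tfrac{u^2+|p|^2}{2}$,
\[
\partial_p\A(x,p)=\phi(s)\,\Id+\phi'(s)\,p\otimes p,
\]
whose eigenvalues are $\phi(s)$ (with multiplicity $N-1$) and $\phi(s)+|p|^2\phi'(s)$. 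By \eqref{phi1} the first lies in $[\phi_0,\phi_1]$; for the second, if $\phi'(s)\ge0$ it is $\ge\phi(s)\ge\phi_0$, while if $\phi'(s)<0$ then $|p|^2\le2s$ gives $\phi(s)+|p|^2\phi'(s)\ge\phi(s)+2s\phi'(s)\ge\phi_0$ by the second part of \eqref{phi3}; moreover it is $\le\phi_1+2s|\phi'(s)|\le\phi_1+2C$ by the first part of \eqref{phi3}. Thus $\A$ satisfies the standard structure conditions for uniformly elliptic quasilinear operators of linear growth, with ellipticity constants independent of $|\n u|$, and the right-hand side lies in $L^\infty(\RN)$; the classical interior $C^{1,\sigma}$ gradient regularity for such operators (Ladyzhenskaya--Uraltseva, DiBenedetto, Lieberman) then gives $u\in C^{1,\sigma}_{\mathrm{loc}}(\RN)$ for some $\sigma\in(0,1)$, and the decay at infinity promotes this to $u\in C^{1,\sigma}(\RN)$. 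I expect this verification of non-degenerate uniform ellipticity --- which is precisely where \emph{both} parts of \eqref{phi3} enter --- to be the main obstacle; once it is in place the regularity theory is off the shelf.

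Finally, positivity. Now $a=\quas$ is continuous, $\phi_0\le a\le\phi_1$, $u\ge0$ and $u\not\equiv0$. Since $g$ is continuous with $g(0)=0$, \eqref{g2} together with compactness gives $C'>0$ with $g(s)\ge-C's$ for $0\le s\le\|u\|_{L^\infty}$, whence $-\Div(a\n u)=g(u)-au\ge-(C'+\phi_1)u$: thus $u$ is a non-negative supersolution of a uniformly elliptic divergence-form operator with bounded zero-order term, and the strong maximum principle (via the Harnack inequality, or Vázquez's strong maximum principle) forces $u>0$ throughout $\RN$, which completes the argument.
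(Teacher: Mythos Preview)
Your proposal is correct and follows essentially the same route as the paper: invoke Theorem \ref{main1} for a non-negative non-trivial weak solution, run a Brezis--Kato/Moser bootstrap to reach $L^\infty$, then apply the $C^{1,\sigma}$ regularity theory for uniformly elliptic quasilinear divergence-form operators (the ellipticity check via the eigenvalues of $\phi(s)\,\mathrm{Id}+\phi'(s)\,p\otimes p$ and \eqref{phi3} is exactly what the paper does in \eqref{eq:4.5}--\eqref{eq:4.7}), and finish with the strong maximum principle. The only cosmetic difference is that the paper treats the operator directly as $\bm{A}(u,\nabla u)$ and quotes Ladyzhenskaya--Ural'tseva/Tolksdorf without the intermediate freezing step to $C^{0,\gamma}$, whereas you first obtain H\"older continuity of $u$ and then recast the equation with $x$-dependent coefficients; both organizations lead to the same conclusion.
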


Once we have the regularity of solutions in hand,
we are able to apply the Pohozaev identity (see Lemma \ref{lem:5.1} below)
to obtain the existence of a {\it radial} ground state solution,
namely, a solution of \ef{eq:1} having 
least energy among all non-trivial radial solutions.

\begin{theorem}\label{main3}
Suppose that \eqref{phi1}, \eqref{phi3} and \eqref{g1}-\eqref{g4} hold. 
Then there exists a radial ground state solution of \eqref{eq:1}, 
which is of the class $C^{1,\sigma}(\RN)$ for some $\sigma \in (0,1)$ and
positive on $\RN$.
\end{theorem}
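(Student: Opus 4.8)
The plan is to work in the radial Sobolev space $\Hr$ and build a minimax value over a suitable class of paths, then upgrade a Palais--Smale (or Cerami) sequence at that level to an actual minimizer among non-trivial radial solutions by exploiting the Pohozaev identity. First I would recall, from the proof of Theorems \ref{main1} and \ref{main2}, that the ``one added dimension'' device produces a non-trivial weak solution which by \eqref{phi3} is in fact a positive $C^{1,\sigma}$ radial solution; in particular the set $\CS$ of non-trivial radial solutions is non-empty, and every element of it satisfies the Pohozaev identity of Lemma \ref{lem:5.1}. The natural candidate for the ground state energy is $m_{\mathrm{rad}} := \inf_{u \in \CS} I(u)$, and the goal is to show this infimum is attained.

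Next I would set up the mountain pass geometry for $I$ restricted to $\Hr$: assumptions \eqref{g1}--\eqref{g4} together with \eqref{phi1} give, as in Theorem \ref{main1}, that $I$ has a strict local minimum at $0$ (here \eqref{g2} with $m > -\phi_0$ is what keeps the quadratic part coercive near the origin) and that \eqref{g4} furnishes a point where $I$ is negative after scaling. Using the Pohozaev identity one identifies the mountain pass level $c_{\mathrm{rad}}$ with the least energy over scaled paths, and the standard Jeanjean--Tanaka type argument shows $c_{\mathrm{rad}} = m_{\mathrm{rad}}$: any radial solution, composed with the dilation $t \mapsto u(\cdot/t)$, gives an admissible path whose maximum is $I(u)$ by Pohozaev, so $c_{\mathrm{rad}} \le m_{\mathrm{rad}}$, while the reverse inequality comes from the fact that any mountain pass critical point lies in $\CS$. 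I would then produce a Cerami sequence $(u_n) \subset \Hr$ at level $c_{\mathrm{rad}}$; thanks to \eqref{phi1} the energy controls $\|u_n\|_{H^1}$, so $(u_n)$ is bounded, and by the compactness of $\Hr \hookrightarrow L^p(\RN)$ for $2 < p < 2^*$ (Strauss) together with \eqref{g2}, \eqref{g3} one gets $u_n \weakto u$ with $u \neq 0$ and $u$ a weak solution; the monotone-operator structure coming from \eqref{phi2}/\eqref{phi3} (the map $t \mapsto t\phi(t^2)$ increasing) gives strong convergence of the gradients, hence $I(u) = c_{\mathrm{rad}}$ and $u \in \CS$ attains $m_{\mathrm{rad}}$. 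Regularity and positivity of $u$ then follow from Theorem \ref{main2}'s argument (elliptic regularity plus the strong maximum principle applied to $-\Div(\phi(\cdot)\n u) + \phi(\cdot) u \ge 0$).

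The main obstacle I anticipate is the passage to the limit in the quasilinear term: the functional is not of class $C^2$ and the principal part $-\Div\{\phi(\tfrac{u^2+|\n u|^2}{2})\n u\}$ mixes $u$ and $\n u$ inside $\phi$, so one cannot simply split off a linear operator. The key is to use the strict monotonicity encoded in \eqref{phi2} (respectively \eqref{phi3}) to show that the operator $u \mapsto -\Div\{\phi(\tfrac{u^2+|\n u|^2}{2})\n u\} + \phi(\tfrac{u^2+|\n u|^2}{2})u$ is of monotone type in the sense that weak convergence plus convergence of the associated quadratic-like form forces strong $H^1$ convergence; testing the equations for $u_n$ and for $u$ against $u_n - u$ and subtracting, the lower-order terms vanish by the Strauss compactness and \eqref{g2}--\eqref{g3}, leaving a monotonicity inequality that pins down $\n u_n \to \n u$ in $L^2$. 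Establishing this convergence carefully — and checking that the Pohozaev identity of Lemma \ref{lem:5.1} indeed applies to the weak limit so that the level identification $c_{\mathrm{rad}} = m_{\mathrm{rad}}$ is legitimate — is where the real work lies; the mountain pass geometry and the dilation argument are comparatively routine once \eqref{phi1}--\eqref{phi3} and \eqref{g1}--\eqref{g4} are in place.
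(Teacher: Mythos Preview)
Your route is different from the paper's, and more circuitous. The paper does \emph{not} go through the mountain pass level or a Jeanjean--Tanaka identification $c_{\rm rad}=m_{\rm rad}$. Instead, since $\CS\ne\emptyset$ is already known from Theorem~\ref{main1}, the paper simply takes a minimizing sequence $\{u_n\}\subset\CS$ for $m_{0,\rm rad}:=\inf_{\CS}I$. Each $u_n$ is an \emph{exact} radial solution, hence (by the regularity in Proposition~\ref{prop:4.1}, which uses \eqref{phi3}) satisfies the Pohozaev identity of Lemma~\ref{lem:5.1} exactly. This gives
\[
I(u_n)=\frac{1}{N}\irn \quasn |\n u_n|^2\,dx \longrightarrow m_{0,\rm rad},
\]
so $\|\n u_n\|_{L^2}$ is bounded immediately, and then $\|u_n\|_{L^2}$ follows as in Lemma~\ref{lem:5.3}. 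With $I'(u_n)=0$ exactly, the monotone-operator/Strauss/Brezis--Lieb machinery from the proof of Theorem~\ref{main1} yields $u_n\to\bar u$ strongly in $\Hr$, and $\bar u$ attains $m_{0,\rm rad}$. No Cerami sequence and no level identification are needed.

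Your proposal, by contrast, has two genuine gaps. First, the assertion that ``thanks to \eqref{phi1} the energy controls $\|u_n\|_{H^1}$'' for a Cerami sequence is unjustified; the boundedness of abstract Palais--Smale/Cerami sequences is precisely the obstacle that forced the one-added-dimension trick in Theorem~\ref{main1}, and \eqref{phi1} by itself gives nothing here. Second, the Jeanjean--Tanaka step---that for each radial solution $u$ the dilation path $t\mapsto I\big(u(\cdot/t)\big)$ attains its maximum at $t=1$---is not clear under \eqref{phi3} alone. Writing $f(t)=I\big(u(\cdot/t)\big)$, Pohozaev gives $f'(1)=0$, but a computation yields
\[
f''(1)=-(N-2)\irn \quas |\n u|^2\,dx+\irn \quasd |\n u|^4\,dx,
\]
and without $\phi'\le 0$ (which is a consequence of \eqref{phi4}, not \eqref{phi3}) the sign of the second term is uncontrolled, so $t=1$ need not even be a local maximum. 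The paper's direct minimization over $\CS$ sidesteps both issues: since the sequence consists of exact solutions, Pohozaev is available \emph{termwise} and supplies the a priori bound for free.
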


Finally, we are interested in the existence of a ground state solution
without restricting ourselves to the space of radial functions.
For this purpose, we need the following additional assumption on $\phi(s)$.

\begin{enumerate}[label=($\phi$\arabic{*}),ref=$\phi$\arabic{*}]
\setcounter{enumi}{3}
\item\label{phi4} $\phi \in C^2\big([0,+\infty)\big)$ and 
there exists $C>0$ such that $s^2|\phi''(s)| \le C$ for all $s\in [0,+\infty)$.
Moreover
\[
3\phi'(s)+2s|\phi''(s)| \le 0 \quad \hbox{and} \quad
0 \le \phi(s)+5s \phi'(s)-2s^2  |\phi''(s)|
 \quad \hbox{for all} \ s\in [0,+\infty). \]
\end{enumerate}
We observe that \eqref{phi4} requires 
\begin{equation}\label{phi'neg}
\phi'(s) \le 0 \quad \hbox{for all} \ s\in [0,+\infty). 
\end{equation}
This further implies that 
\begin{equation}\label{stima}
t^2\phi(t^2)\le  \Phi(t^2) \quad \hbox{for all} \ t\in \R.
\end{equation}
Elementary (but complicated) calculations show that 
\eqref{phi4} is satisfied for $\phi(s)=K+(1+s)^{-\alpha}$, with
$0 \le \alpha \le \frac{\sqrt{57}-7}{4}$, $K>0$ or
$\frac{\sqrt{57}-7}{4} < \alpha \le \frac{1}{2}$, $K \gg 1$. 
Under the stronger assumption \eqref{phi4}, 
we are able to obtain the following result,
which can be seen as an extension of the result in \cite{BL} for the semilinear case
$\phi(s) \equiv 1$. 

\begin{theorem} \label{main4}
Assume \eqref{phi1}, \eqref{phi3}, \eqref{phi4} and \ef{g1}-\ef{g4}.
Then \ef{eq:1} possesses a ground state solution $u$, 
namely, $u$ has least energy among all non-trivial solutions.
Moreover, $u$ is with fixed sign on $\RN$, 
radially symmetric with respect to some point
and of the class $C^{1,\sigma}(\RN)$ for some $\sigma \in (0,1)$. 
\end{theorem}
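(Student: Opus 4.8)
The plan is to promote the radial ground state $u_r$ furnished by Theorem~\ref{main3} to a ground state among \emph{all} non-trivial solutions, by describing the least-energy level through the Pohozaev identity of Lemma~\ref{lem:5.1} and Schwarz symmetrization; the fixed sign and the radial symmetry (about some point) of an arbitrary ground state will then fall out of the equality case of the symmetrization inequality. Write $w=\frac{u^{2}+|\n u|^{2}}{2}$ and set
\[
\mathcal{P}:=\Big\{u\in\H\setminus\{0\}\ :\ \irn\phi(w)\,|\n u|^{2}\,dx=N\,I(u)\Big\},
\]
the Pohozaev set attached to Lemma~\ref{lem:5.1}; by that lemma every non-trivial solution of \eqref{eq:1} lies in $\mathcal{P}$, and $I=\frac1N\irn\phi(w)|\n u|^{2}\,dx>0$ on $\mathcal{P}$. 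From the proof of Theorem~\ref{main3} (the radial ground state is obtained as a constrained minimizer, and the Lagrange multiplier vanishes) we also record that $I(u_r)=\inf_{\mathcal{P}\cap\Hr}I$. For $u\in\H$ let $u^{*}$ denote the Schwarz symmetrization of $|u|$, and for $\lambda>0$ put $u_{\lambda}(x):=u(x/\lambda)$, so that $(u_{\lambda})^{*}=(u^{*})_{\lambda}$.

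First I would analyse the scaling fibres. A direct computation gives $\frac{d}{d\lambda}I(u_{\lambda})=\lambda^{N-1}\Psi_{u}(\lambda^{-2})$, where $\Psi_{u}(\sigma)=N\irn\big[\Phi(\tfrac{u^{2}+\sigma|\n u|^{2}}{2})-G(u)\big]\,dx-\sigma\irn\phi(\tfrac{u^{2}+\sigma|\n u|^{2}}{2})|\n u|^{2}\,dx$, and $\Psi_{u}(\sigma)=0$ exactly when $u_{\lambda}\in\mathcal{P}$ with $\lambda=\sigma^{-1/2}$. Since $N\ge 3$ one has $\tfrac{N}{2}-1>0$, so $\phi'\le 0$ (see \eqref{phi'neg}) forces $\Psi_{u}'(\sigma)>0$; together with \eqref{phi1} and \eqref{stima} one checks $\Psi_{u}(\sigma)>0$ for small $\sigma$, while $\irn\Phi(\tfrac{u^{2}}{2})\,dx<\irn G(u)\,dx$ on $\mathcal{P}$ (again using $\phi'\le0$ and $N\ge3$) gives $\Psi_{u}(\sigma)<0$ for large $\sigma$. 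Hence, for $u\in\mathcal{P}$ and for its symmetrization, $\lambda\mapsto I(u_{\lambda})$ is strictly increasing then strictly decreasing, with a \emph{unique} maximum attained at some $\lambda(u)>0$ and $u_{\lambda(u)}\in\mathcal{P}$; in particular $I(v)=\max_{\lambda>0}I(v_{\lambda})$ for every non-trivial solution $v$.

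The decisive point is the Pólya--Szegő-type inequality
\[
\irn\Phi\!\Big(\tfrac{(u^{*})^{2}+|\n u^{*}|^{2}}{2}\Big)\,dx\ \le\ \irn\Phi\!\Big(\tfrac{u^{2}+|\n u|^{2}}{2}\Big)\,dx ,\qquad u\in\H .
\]
As $u^{*}$ is equimeasurable with $|u|$ and $G$ is even with $G(0)=0=\Phi(0)$, we have $\irn G(u^{*})\,dx=\irn G(u)\,dx$ and $\irn\Phi(\tfrac{(u^{*})^{2}}{2})\,dx=\irn\Phi(\tfrac{u^{2}}{2})\,dx$, so the inequality reduces to a rearrangement inequality for the integrand $(s,r)\mapsto\Phi(\tfrac{s^{2}+r^{2}}{2})-\Phi(\tfrac{s^{2}}{2})$, which vanishes at $r=0$ and is, for each fixed $s$, convex and non-decreasing in $r\ge0$: indeed $\partial_{r}^{2}\,\Phi(\tfrac{s^{2}+r^{2}}{2})=\phi(w)+r^{2}\phi'(w)\ge\phi(w)+2w\phi'(w)\ge\phi_{0}>0$ by \eqref{phi'neg} and \eqref{phi3}. (The finer $\phi''$-bounds in \eqref{phi4} are what make this argument — in particular its equality case — robust, and they also keep the analogous rearrangement estimate valid for the genuinely quasilinear term $\irn\phi(w)|\n u|^{2}\,dx$.) Granting this, $I((u^{*})_{\lambda})\le I(u_{\lambda})$ for all $\lambda>0$, whence, for any non-trivial solution $v$,
\[
I(v)=\max_{\lambda>0}I(v_{\lambda})\ \ge\ \max_{\lambda>0}I\big((v^{*})_{\lambda}\big)=I\big((v^{*})_{\lambda(v^{*})}\big)\ \ge\ \inf_{\mathcal{P}\cap\Hr}I=I(u_{r}),
\]
because $(v^{*})_{\lambda(v^{*})}$ is a radial element of $\mathcal{P}$. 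Thus $u_{r}$ is a ground state of \eqref{eq:1}, and, being the solution of Theorem~\ref{main3}, it is positive, radially symmetric and of class $C^{1,\sigma}(\RN)$.

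Finally, if $u$ is \emph{any} ground state, then $I(u)=I(u_{r})$ turns the above chain (with $v=u$) into equalities; strict unimodality of the fibres then forces the maximizing scales for $u$ and $u^{*}$ to coincide at some $\lambda_{0}$ and equality to hold in the Pólya--Szegő step for $u_{\lambda_{0}}$. Its equality case — available thanks to the strict convexity of $r\mapsto\Phi(\tfrac{s^{2}+r^{2}}{2})$ coming from \eqref{phi3}--\eqref{phi4} — forces $|u|$ (hence $u$) to be a translate of $u^{*}$; so $u$ is radially symmetric about some point with radially decreasing modulus, and by unique continuation (or the strong maximum principle) $u$ cannot vanish, so it has a fixed sign. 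Its $C^{1,\sigma}(\RN)$-regularity follows by applying the argument of Theorem~\ref{main2} to the bounded weak solution $u$ under \eqref{phi1} and \eqref{phi3}. I expect the genuine obstacle to be precisely the Pólya--Szegő-type inequality above — and above all its equality case — for the \emph{coupled} functional $u\mapsto\irn\Phi(\tfrac{u^{2}+|\n u|^{2}}{2})\,dx$, in which $u$ and $\n u$ enter together inside the nonlinearity, so that the classical rearrangement theory for integrands of the form $\int f(|\n u|)$ or $\int f(u,|\n u|)$ with $f(\cdot,0)\equiv 0$ does not apply verbatim; the somewhat involved pointwise conditions on $\phi,\phi',\phi''$ in \eqref{phi4} are imposed exactly so that this inequality, its equality analysis, and the unimodality of the scaling fibres all go through. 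A subsidiary point, inherited from the proof of Theorem~\ref{main3}, is that $\inf_{\mathcal{P}\cap\Hr}I$ is attained and equals $I(u_{r})$.
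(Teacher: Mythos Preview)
Your outline has the right shape---symmetrize, project onto the Pohozaev set via a scaling fibre, and compare with the radial ground state---and is close in spirit to the paper's argument, but there are two genuine gaps.

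\medskip
\textbf{1. The identity $I(u_r)=\inf_{\mathcal{P}\cap\Hr}I$ is not established.} You attribute this to the proof of Theorem~\ref{main3}, writing that the radial ground state is ``obtained as a constrained minimizer, and the Lagrange multiplier vanishes''. But that proof does nothing of the sort: it minimizes $I$ over the set $\CS$ of non-trivial \emph{radial solutions} and uses compactness to pass to the limit; there is no constrained minimization over $\mathcal{P}$ and no Lagrange multiplier argument. From Theorem~\ref{main3} one only gets $I(u_r)=\inf_{\CS}I\ge\inf_{\mathcal{P}\cap\Hr}I$ (since $\CS\subset\mathcal{P}\cap\Hr$). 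The reverse inequality is exactly the statement that $\mathcal{P}$ is a natural constraint, and this is the content of the paper's Proposition~\ref{prop:6.2}: one shows that any critical point of $I|_{\mathcal{P}}$ satisfies $I'(u)=\mu P'(u)$, establishes a \emph{second} Pohozaev-type identity $\tilde{P}(u)=0$ for $P'(u)=0$, and concludes $\mu=0$. This step is where \eqref{phi4} (in particular the bounds on $\phi''$) enters essentially; it is not a bookkeeping matter and does not come for free from the earlier sections. Without it your chain $I(v)\ge I\big((v^{*})_{\lambda(v^{*})}\big)\ge\inf_{\mathcal{P}\cap\Hr}I=I(u_r)$ does not close.

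\medskip
\textbf{2. The equality-case argument for radial symmetry of \emph{every} ground state overreaches.} The theorem only asserts the existence of \emph{one} ground state that is radially symmetric, with fixed sign, and $C^{1,\sigma}$; the paper obtains it constructively as the limit of a radial (symmetrized and rescaled) minimizing sequence. Your final paragraph tries to prove more---that any ground state is a translate of its symmetrization---via the equality case of the generalized Polya--Szeg\H{o} inequality. But that equality case (Brothers--Ziemer, or \cite[Corollary~2.12]{HK}) requires the measure condition $\mathcal{L}\{0<w^{*}<\mathrm{ess\,sup}\,w,\ \nabla w^{*}=0\}=0$, and the paper explicitly states in Remark~\ref{rem:6.5} that this cannot be verified here. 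So this part of your argument is unjustified; fortunately it is not needed for the statement as phrased.

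\medskip
A smaller slip: your signs for $\Psi_u$ are reversed. Since $\Psi_u'(\sigma)=\tfrac{N-2}{2}\int\phi(w_\sigma)|\nabla u|^2-\tfrac{\sigma}{2}\int\phi'(w_\sigma)|\nabla u|^4>0$ and $\Psi_u(0)=N\int[\Phi(\tfrac{u^2}{2})-G(u)]<0$ for $u\in\mathcal{P}$, one has $\Psi_u(\sigma)<0$ for small $\sigma$ and $\Psi_u(\sigma)\to+\infty$ as $\sigma\to\infty$, not the other way round. The unimodality of $\lambda\mapsto I(u_\lambda)$ with unique maximum at $\lambda=1$ still follows (via $\sigma=\lambda^{-2}$), so this is cosmetic.

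\medskip
In summary: the paper's route differs from yours chiefly in that it symmetrizes the integrands $J_1=\phi\,b^2$ and $J_2=\Phi-\tfrac1N\phi\,b^2$ (whose convexity in $b$ genuinely needs \eqref{phi4}) rather than $\Phi$ itself, and---crucially---it proves that $\mathcal{P}$ is a natural constraint (Proposition~\ref{prop:6.2}) before concluding $b=m_0$. Your approach could be made to work, but only after supplying that natural-constraint step; and the claim about arbitrary ground states should be dropped.
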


Here, we briefly introduce some ideas to obtain our main results.
By \eqref{phi1} and \ef{g1}-\ef{g4}, 
one can see that the functional $I$ has the mountain pass geometry. 
Then the existence of a non-trivial critical point of $I$ can be
shown by establishing the Palais-Smale condition.
Indeed once we could have the {\it boundedness of Palais-Smale sequences} in hand, 
one can expect the strong convergence of Palais-Smale sequences by 
decomposing the nonlinear term $g$ into two parts as in 
\cite{AP, BL, PW} and restricting ourselves to 
the space of radial functions.
However, as is well-known, the most difficult part is to prove the
boundedness of Palais-Smale sequences.

A standard strategy of constructing a bounded Palais-Smale sequence
is to apply so-called {\it monotonicity trick} as in \cite{J,S}.
However in the process of obtaining the boundedness, 
one needs to use the Pohozaev identity, 
causing the necessity of additional assumption on $\phi$ in our problem.
To avoid this technical difficulty, 
we adapt another strategy, namely, 
a technique of {\it adding one dimension of space} as established in \cite{HIT,jj}.
This approach enables us to construct a bounded Palais-Smale sequence
{\it without} using the Pohozaev identity.

Even if we could obtain the existence of a bounded Palais-Smale sequence,
we face another difficulty in our quasilinear problem.
In general, the boundedness of a Palais-Smale sequence $\{ u_n\}$ 
guarantees the weak convergence to some $u$ in $\H$ and $I'(u)=0$ from $I'(u_n) \to 0$.
Then the strong convergence can be obtained by considering the difference between 
$I'(u_n)[u_n] $ and $I'(u)[u]$.
However in our problem, one cannot show $I'(u)=0$ {\it a priori}, 
because the weak convergence of $u_n \rightharpoonup u$ in $\H$ provides us
{\it no information} of the pointwise convergence of $\n u_n(x) \to \n u(x)$.
To overcome this difficulty, in the case $m\ge 0$, 
we apply the theory of monotone operator \cite{DM}
for the functional 
\[
\Psi_0(u) := \irn \Quas \,dx, \quad u \in H^1(\RN) \]
to obtain the pointwise convergence of the gradient.
Then by considering $\Psi_0(u)-\Psi_0(u_n)-\Psi_0'(u_n)[u_n-u]$, 
together with the convexity of $\Phi(\frac{t^2}{2})$, 
we are able to prove the strong convergence of $u_n \to u$ in $\H$
and $I'(u)=0$ {\it a posteriori}. 
The case $m \in (-\phi_0,0)$ can be treated analogously. 

Finally in order to prove Theorem \ref{main4}, as in \cite{JT}, one establishes that
\begin{equation} \label{eq:3}
m_0 =b ,
\end{equation}
where
\begin{align*}
m_0 &:= \inf_{u \in S} I(u), \quad
S : = \{ u \in \H \setminus \{ 0\} \ ; \ I'(u)=0 \}, \\
b &:= \min_{u \in \P} I(u), \quad 
\P: = \{ u \in \H \setminus \{ 0\} \ ; \ u
\ \hbox{satisfies the Pohozaev identity for \ef{eq:1}} \}.
\end{align*}
In the semilinear case \ef{eq:2}, we can prove \ef{eq:3} by adapting
the scaling $\theta \mapsto u(\cdot/\theta)$
and using the variational characterization:
\begin{equation} \label{eq:4}
m_0 = \inf \left\{ \irn |\n u|^2 \,dx \ ; \ \irn G(u) \,dx =1 \right\}. 
\end{equation}
In our problem \ef{eq:1}, we cannot readily see that \ef{eq:3} holds
because of the loss of scaling property.
Once we could have the variational characterization \ef{eq:3} in hand, 
one can argue as in \cite{AW,CJS}.
Indeed \eqref{phi4} guarantees the convexity of some functions 
related with the Pohozaev identity for \ef{eq:1}.
This enables us to apply the generalized 
Polya-Szeg\"o inequality \cite{HK} to prove that minimizing sequences for $m_0$ 
can be assumed to be radially symmetric.
Then the existence of a ground state of \ef{eq:1} can be shown similarly as Theorem \ref{main1}.

\smallskip
This paper is organized as follows.
In Section \ref{se2}, we investigate some basic properties of the operator $\Psi_0$.
We prove Theorem \ref{main1} by applying the mountain pass theorem in Section \ref{se3}.
Section \ref{se4} is devoted to the study of regularity and positivity of
weak solutions of \ef{eq:1}.
Theorems \ref{main2} and \ref{main3} will be shown in Section \ref{se5}.
Finally, we prove Theorem \ref{main4} in Section \ref{se6}.

We conclude this introduction fixing some notations. 
For any $p\ge 1$, we denote by $L^p(\RN)$ the usual Lebesgue spaces 
equipped by the standard norm $\|\cdot\|_{L^p}$. 
In our estimates, we will frequently denote by $C>0$ fixed constants, 
that may change from line to line, 
but are always independent of the variable under consideration. 
We also use the notations $o_n(1)$ to describe a quantity which goes to zero as $n \to +\infty$.
Moreover, for any $R>0$, we denote by $B_R$ the ball of $\RN$ 
centered in the origin with radius $R$.

\section{Fundamental properties of the operator $\Psi_0$}\label{se2}

In this section, we investigate fundamental properties of the operator
$\Psi_0: H^1(\RN) \to \R$ defined by
\[
\Psi_0(u) := \irn \Quas \,dx, \quad u \in H^1(\RN).\]
For later use, we also denote,
\begin{align*}
\Psi_m(u) &:= \Psi_0(u) +\frac{m}{2} \irn u^2 \,dx, \\
\tilde{\Psi}_m(u) &:= \irn \left\{ \Quas + m \left( \frac{u^2+|\n u|^2}{2} \right) \right\} \,dx
= \Psi_m(u) + \frac{m}{2} \irn |\n u|^2 \,dx,
\end{align*}
where $m\in (-\phi_0,+\infty)$ is defined in \eqref{g2}.
Then one readily finds that
\begin{align}
\Psi_0(u) \le \Psi_m(u) \le \tilde{\Psi}_m(u) &\quad \hbox{if} \ m\in [ 0,+\infty), \label{eq:2.1} \\
\tilde{\Psi}_m(u) \le \Psi_m(u) \le \Psi_0(u) &\quad \hbox{if} \ m\in (-\phi_0,0). \label{eq:2.2} 
\end{align}
Moreover from \eqref{phi1}, it is standard to prove that
$\Psi_0$, $\Psi_m$, $\tilde{\Psi}_m \in C^1(\H,\R)$ and
\[
\Psi_m'(u)[\varphi] = \irn \left\{\quas ( u \varphi +\nabla u \cdot \n \varphi) +m u \varphi \right\}\,dx,
\quad \varphi \in H^1(\RN).\]

First we recall the following convergence result, 
which appears in the theory of monotone operators
\cite[Lemma 6]{DM}.

\begin{proposition} \label{prop:2.1}
Let $X$ be a finite dimensional real Hilbert space with norm $| \cdot |$ 
and inner product $\langle \cdot,\cdot \rangle$.
Suppose that $\beta :X \to X$ is a continuous function which is 
strictly monotone, i.e.
\[
\langle \beta(\xi)-\beta(\bar{\xi}), \xi-\bar{\xi} \rangle >0
\quad \hbox{for every} \ \xi,\bar{\xi} \in X \ \hbox{with} \ \xi \ne \bar{\xi}.\]
Let $\{ \xi_n \} \subset X$ and $\xi \in X$ be such that
\[
\lim_{n \to +\infty} \langle \beta(\xi_n)-\beta(\xi), \xi_n-\xi \rangle =0.\]
Then $\{ \xi_n \}$ converges to $\xi$ in $X$.
\end{proposition}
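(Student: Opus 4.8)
The plan is to argue by contradiction, exploiting the finite dimensionality of $X$ (hence local compactness of its spheres) together with a rescaling trick along segments that reduces the possibly unbounded case to a compact one. The only monotonicity I will really need along the way is the non-strict one, which follows from the hypothesis; the strict inequality is used just once, at the very end, to close the contradiction.

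First I would suppose that $\{\xi_n\}$ does not converge to $\xi$. Then, after passing to a subsequence, there is $\delta>0$ with $|\xi_n-\xi|\ge\delta$ for all $n$. I set
\[
t_n:=\frac{\delta}{|\xi_n-\xi|}\in(0,1],\qquad \eta_n:=\xi+t_n(\xi_n-\xi),
\]
so that $|\eta_n-\xi|=\delta$ for every $n$. Since $\dim X<\infty$, the sphere $\{y\in X:|y-\xi|=\delta\}$ is compact, so a further subsequence of $\{\eta_n\}$ converges to some $\eta\in X$ with $|\eta-\xi|=\delta$; in particular $\eta\ne\xi$.

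The key step is to transfer the smallness of $\langle\beta(\xi_n)-\beta(\xi),\xi_n-\xi\rangle$ down to the points $\eta_n$. From the identities $\eta_n-\xi=t_n(\xi_n-\xi)$ and $\xi_n-\eta_n=(1-t_n)(\xi_n-\xi)$ with $1-t_n\ge0$, non-strict monotonicity of $\beta$ applied first to the pair $(\xi_n,\eta_n)$ gives $\langle\beta(\xi_n)-\beta(\eta_n),\xi_n-\xi\rangle\ge0$, while applied to the pair $(\eta_n,\xi)$ it gives $\langle\beta(\eta_n)-\beta(\xi),\eta_n-\xi\rangle\ge0$. Combining these with $1/t_n\ge1$ and splitting $\beta(\xi_n)-\beta(\xi)=\big(\beta(\xi_n)-\beta(\eta_n)\big)+\big(\beta(\eta_n)-\beta(\xi)\big)$, I obtain the chain
\[
0\le\langle\beta(\eta_n)-\beta(\xi),\eta_n-\xi\rangle\le\langle\beta(\eta_n)-\beta(\xi),\xi_n-\xi\rangle\le\langle\beta(\xi_n)-\beta(\xi),\xi_n-\xi\rangle,
\]
whose right-hand side tends to $0$ by hypothesis, forcing $\langle\beta(\eta_n)-\beta(\xi),\eta_n-\xi\rangle\to0$. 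Letting $n\to+\infty$ along the subsequence and using the continuity of $\beta$ together with $\eta_n\to\eta$, I get $\langle\beta(\eta)-\beta(\xi),\eta-\xi\rangle=0$; but $\eta\ne\xi$, so strict monotonicity yields $\langle\beta(\eta)-\beta(\xi),\eta-\xi\rangle>0$, a contradiction, and hence $\xi_n\to\xi$.

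I expect the only genuine obstacle to be the case of an unbounded sequence $\{\xi_n\}$, for which compactness cannot be invoked directly; the device that resolves it is precisely the projection of $\xi_n$ onto the fixed sphere of radius $\delta$ along the segment $[\xi,\xi_n]$, together with the observation that non-strict monotonicity lets the relevant inner product only grow as one moves from $\eta_n$ outward to $\xi_n$. Everything else is routine bookkeeping with subsequences.
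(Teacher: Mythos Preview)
Your argument is correct. The chain of inequalities is the heart of the matter, and it is justified: from $\eta_n-\xi=t_n(\xi_n-\xi)$ and $\xi_n-\eta_n=(1-t_n)(\xi_n-\xi)$ with $t_n\in(0,1]$, monotonicity at the pair $(\eta_n,\xi)$ gives the first inequality, the factor $t_n\le 1$ gives the second, and monotonicity at the pair $(\xi_n,\eta_n)$ gives the third. Passing to the limit and invoking strict monotonicity at $(\eta,\xi)$ closes the contradiction cleanly.

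As for comparison with the paper: there is nothing to compare, because the paper does \emph{not} prove Proposition~2.1. It is stated as a recalled result from the literature, with an explicit reference to \cite[Lemma~6]{DM} (Dal~Maso--Murat). Your proof is in fact precisely the argument given there: contradiction, projection of the (possibly unbounded) sequence onto a fixed sphere about $\xi$ along the segment $[\xi,\xi_n]$, compactness of that sphere in finite dimensions, and the monotonicity chain to push the vanishing inner product down to the projected points. So you have independently reproduced the cited proof.
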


Using Proposition \ref{prop:2.1}, we are able to obtain the following
pointwise convergence result of the gradient.

\begin{lemma} \label{lem:2.2}
Assume \eqref{phi1} and \eqref{phi2} when $m\in [0,+\infty)$, or \eqref{phi2'} when $m\in (-\phi_0,0)$. 
If $\{ u_n \} \subset \H$ satisfies $u_n \rightharpoonup u$ weakly in $\H$
and $\Psi_m'(u_n)[u_n-u] \to 0$ as $n \to +\infty$, then
\[
\nabla u_n(x) \to \n u(x) \quad{as} \ n\to +\infty \ \hbox{a.e. in} \ \R^N.
\]
\end{lemma}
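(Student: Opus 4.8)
The goal is to upgrade the weak convergence $u_n \weakto u$ in $H^1(\RN)$, plus the vanishing of the coupling quantity $\Psi_m'(u_n)[u_n-u]$, to a.e.\ pointwise convergence of the gradients. The natural strategy is to work \emph{locally} on a ball $B_R$ and apply Proposition \ref{prop:2.1} to the vector field obtained by differentiating the integrand of $\Psi_m$ with respect to the gradient variable, namely
\[
\beta_R(x,\bp) := \phi\!\left(\frac{u(x)^2 + |\bp|^2}{2}\right)\bp,
\]
viewed as a map on $X = \RN$ at (a.e.) fixed $x$. First I would check that, under \eqref{phi1} together with \eqref{phi2} (resp.\ \eqref{phi2'}), the map $\bp \mapsto \beta_R(x,\bp)$ is continuous and \emph{strictly monotone} on $\RN$: this is exactly the content of the remark after \eqref{phi2} that $t \mapsto t\phi(t^2)$ is increasing, suitably adapted to the presence of the frozen mass term $u(x)^2/2$ and, in the case $m\in(-\phi_0,0)$, to the shift by $\frac{m}{2}|\bp|^2$ coming from $\Psi_m$. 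Indeed $\langle \beta_R(x,\bp) - \beta_R(x,\bar\bp),\, \bp - \bar\bp\rangle > 0$ for $\bp \neq \bar\bp$ follows from the convexity hypothesis applied along the segment joining $\bp$ and $\bar\bp$; this is a pointwise-in-$x$ statement and requires no integration.

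**Key steps.** With strict monotonicity of $\beta_R(x,\cdot)$ in hand, the heart of the argument is to produce, for each fixed $R$, a measurable function whose integral over $B_R$ tends to zero and which therefore (after passing to a subsequence) tends to zero a.e.; at each such point the hypotheses of Proposition \ref{prop:2.1} are met and we conclude $\nabla u_n(x) \to \nabla u(x)$. Concretely, I would consider
\[
e_n(x) := \left\langle \beta_R\big(x,\nabla u_n(x)\big) - \beta_R\big(x,\nabla u(x)\big),\, \nabla u_n(x) - \nabla u(x) \right\rangle \ge 0,
\]
and show $\int_{B_R} e_n\,dx \to 0$. To do this I would introduce a cutoff $\psi \in C_c^\infty(\RN)$ with $\psi \equiv 1$ on $B_R$, $0 \le \psi \le 1$, and test $\Psi_m'(u_n) - \Psi_m'(u)$ against $\psi(u_n - u)$. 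Expanding, the gradient part of $\Psi_m'(u_n)[\psi(u_n-u)] - \Psi_m'(u)[\psi(u_n-u)]$ produces $\int \psi\, e_n\,dx$ plus error terms involving $\nabla \psi \cdot (u_n - u)$ and the zeroth-order terms $\phi(\cdots)u_n \cdot$ and $m u_n\cdot$; these errors go to zero because $u_n \to u$ strongly in $L^2_{loc}$ (Rellich) and in $L^p_{loc}$ for $2 \le p < 2^*$, while the $\phi$-factors are bounded by $\phi_1$ via \eqref{phi1}. Also $\Psi_m'(u_n)[\psi(u_n-u)] \to 0$: one splits $\psi(u_n-u)$ and uses that $\{\psi(u_n-u)\}$ is bounded in $H^1$ and $\Psi_m'(u_n)[u_n-u]\to 0$, again absorbing localized errors by local strong convergence. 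Hence $0 \le \int_{B_R}e_n \le \int \psi\, e_n\,dx = o_n(1)$, giving $e_n \to 0$ in $L^1(B_R)$, so a subsequence converges to $0$ a.e.\ on $B_R$, and Proposition \ref{prop:2.1} yields $\nabla u_n(x)\to\nabla u(x)$ a.e.\ on $B_R$. Exhausting $\RN$ by balls $B_R$, $R = 1,2,\dots$, and a diagonal subsequence argument gives the conclusion a.e.\ on $\RN$; finally, since the limit is the same along every subsequence, the full sequence converges.

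**Main obstacle.** The delicate point is not the monotonicity — that is essentially the algebraic content of \eqref{phi2}/\eqref{phi2'} — but the bookkeeping that shows $\Psi_m'(u_n)[\psi(u_n-u)] \to 0$ and that the frozen-coefficient field $\beta_R(x,\nabla u_n)$ (with $u^2$ in place of $u_n^2$) is close to the true one $\phi\big(\frac{u_n^2+|\nabla u_n|^2}{2}\big)\nabla u_n$ that actually appears in $\Psi_m'(u_n)$. The discrepancy is controlled by the modulus of continuity of $\phi$ together with $|u_n^2 - u^2| \to 0$ in $L^1_{loc}$ and the bound $|\nabla u_n|$ in $L^2_{loc}$; one must be careful that this error multiplies $\nabla u_n \cdot (\nabla u_n - \nabla u)$, which is only bounded in $L^1_{loc}$, so a uniform-integrability / Vitali-type argument (or splitting on the set where $|u_n^2-u^2|$ is large, which has small measure) is needed rather than a crude $L^\infty$ estimate on $\phi$. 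In the case $m\in(-\phi_0,0)$, one also keeps track of the fact that monotonicity now uses the full convexity of $t\mapsto\Phi(\frac{t^2}{2})+\frac{m}{2}t^2$ so that the shifted field $\beta_R(x,\bp) + m\bp$ is the relevant strictly monotone object; the zeroth-order $mu\varphi$ contribution is handled exactly as the $\phi(\cdots)u\varphi$ term by local strong $L^2$ convergence. Once these localizations are organized, the proof closes cleanly.
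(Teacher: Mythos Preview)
Your plan has two genuine gaps, both stemming from the decision to freeze the zeroth-order variable and treat only $\bp\mapsto\phi\big(\tfrac{u(x)^2+|\bp|^2}{2}\big)\bp$ as the monotone object.

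\textbf{The cutoff step does not close.} From the global hypothesis $\Psi_m'(u_n)[u_n-u]\to 0$ you cannot conclude $\Psi_m'(u_n)[\psi(u_n-u)]\to 0$. Writing $\psi(u_n-u)=(u_n-u)-(1-\psi)(u_n-u)$, the remainder $\Psi_m'(u_n)[(1-\psi)(u_n-u)]$ contains the term
\[
\int_{\RN}(1-\psi)\,\phi\!\left(\frac{u_n^2+|\nabla u_n|^2}{2}\right)\nabla u_n\cdot\nabla(u_n-u)\,dx,
\]
which lives on $\{|x|>R\}$ where no compactness is available; it need not vanish. Equivalently: your integrand is not known to have a sign, so the global information $\int_{\RN}(\cdots)\,dx\to 0$ cannot be restricted to $B_R$.

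\textbf{The discrepancy term is not controllable under \eqref{phi1}.} Even granting the previous point, the error
\[
\int\psi\left[\phi\!\left(\frac{u_n^2+|\nabla u_n|^2}{2}\right)-\phi\!\left(\frac{u^2+|\nabla u_n|^2}{2}\right)\right]\nabla u_n\cdot\nabla(u_n-u)\,dx
\]
cannot be shown to vanish: $\phi$ is only continuous (not uniformly continuous), so $u_n^2-u^2\to 0$ a.e.\ gives no smallness for the bracket at points where $|\nabla u_n|$ is large, while $|\nabla u_n|^2$ is merely bounded in $L^1$ with no uniform integrability. Your proposed Vitali/splitting argument needs precisely one of these missing ingredients.

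The paper bypasses both issues at once by working with the full vector $\xi=(u,\nabla u)\in\R\times\RN$ and $\beta(\xi)=\tfrac{\xi}{\sqrt{2}}\phi\big(\tfrac{|\xi|^2}{2}\big)$. Then
\[
\big(\Psi_0'(u_n)-\Psi_0'(u)\big)[u_n-u]=\sqrt{2}\int_{\RN}\langle\beta(\xi_n)-\beta(\xi),\xi_n-\xi\rangle\,dx
\]
\emph{exactly}, with no freezing and no cutoff, and the integrand is pointwise nonnegative by \eqref{phi2}. Since the global integral tends to zero (using only $\Psi_m'(u)[u_n-u]\to 0$ from weak convergence and, for $m\ge 0$, that $m\int(u_n-u)^2\ge 0$), the integrand tends to zero a.e., and Proposition~\ref{prop:2.1} applied in $\R^{N+1}$ yields $\nabla u_n(x)\to\nabla u(x)$ a.e. The case $m\in(-\phi_0,0)$ is identical with $\beta_m(\xi)=\big(\phi(\tfrac{|\xi|^2}{2})+m\big)\tfrac{\xi}{\sqrt{2}}$, using \eqref{phi2'} and the sign $-m\int|\nabla(u_n-u)|^2\ge 0$.
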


\begin{proof}
First since $u_n \rightharpoonup u$ weakly in $\H$, 
it follows that $\Psi_m'(u)[u_n-u] \to 0$ as $n \to +\infty$.
Hence by the assumption, we get
\begin{align} \label{eq:2.3}
o_n(1) &= \big( \Psi_m'(u_n)-\Psi_m'(u) \big) [u_n-u] \\
&= \big( \Psi_0'(u_n) -\Psi_0'(u) \big) [u_n-u] +m \irn (u_n-u)^2\,dx. \notag
\end{align}
We distinguish the proof into the cases $m\in [0,+\infty)$ and $m\in (-\phi_0,0)$.

When $m\in [0,+\infty)$, \ef{eq:2.3} yields that
\begin{equation} \label{eq:2.4}
\limsup_{n \to +\infty} \big( \Psi_0'(u_n)-\Psi_0'(u) \big) [u_n-u] \le 0.
\end{equation}
On the other hand, a direct computation shows that
\begin{align*}
&\big( \Psi_0'(u_n)-\Psi_0'(u) \big) [u_n-u] \\
&= \irn \left\{ \quasn \n u_n - \quas \n u \right\} \cdot \nabla (u_n-u) \,dx \\
&\quad + \irn \left\{ \quasn u_n - \quas u \right\} (u_n-u) \,dx.
\end{align*}
Putting $X= \R \times \R^N$, $\xi_n= (u_n(x), \nabla u_n(x))$,
$\xi=(u(x),\nabla u(x))$ and 
$\beta(\xi)= \frac{\xi}{\sqrt{2}} \phi \left( \frac{|\xi|^2}{2} \right)$, 
one finds that
\begin{equation} \label{eq:2.5}
\big( \Psi_0'(u_n)-\Psi_0'(u) \big) [u_n-u]
= \sqrt{2} \irn \langle \beta(\xi_n)-\beta(\xi),\xi_n-\xi \rangle \,dx, 
\end{equation}
where $\langle \cdot,\cdot \rangle$ is the standard inner product on $\R^{N+1}$.
Moreover by \eqref{phi2}, as performed in \cite{JR} and \cite{Stu}, it follows that
\begin{align} 
\langle \beta(\xi)-\beta(\bar{\xi}), \xi-\bar{\xi} \rangle 
&= \frac{1}{\sqrt{2}} \phi \left( \frac{|\xi|^2}{2} \right) |\xi -\bar{\xi}|^2>0
\quad \hbox{whenever} \ |\xi|=|\bar{\xi}| \ \hbox{and} \ \xi \ne \bar{\xi}, \notag \\
\langle \beta(\xi)-\beta(\bar{\xi}), \xi-\bar{\xi} \rangle 
&=\phi \left( \frac{|\xi|^2}{2} \right) 
\left\langle \frac{\xi}{\sqrt{2}}, \xi-\bar{\xi} \right\rangle
-\phi \left( \frac{|\bar{\xi}|^2}{2} \right) 
\left\langle \frac{\bar{\xi}}{\sqrt{2}}, \xi-\bar{\xi} \right\rangle \label{eq:260} \\
&\ge \left\{ \frac{|\xi|}{\sqrt{2}} \phi \left( \frac{|\xi|^2}{2} \right) 
- \frac{|\bar{\xi}|}{\sqrt{2}} \phi \left( \frac{|\bar{\xi}|^2}{2} \right) \right\}
(|\xi|- |\bar{\xi}|) 
>0 \quad \hbox{whenever} \ |\xi| \ne |\bar{\xi}|. \label{eq:2.6} 
\end{align} 
Thus from \ef{eq:2.4} and \ef{eq:2.5}, we obtain
\[
0= \lim_{n \to +\infty} \irn \langle \beta(\xi_n)-\beta(\xi),\xi_n-\xi \rangle \,dx\]
and hence
\begin{equation} \label{eq:2.7}
\langle \beta(\xi_n)-\beta(\xi),\xi_n-\xi \rangle \to 0 
\quad \hbox{as} \ n\to +\infty \ \hbox{a.e. in} \ \R^N.
\end{equation}
By \eqref{eq:260}, \ef{eq:2.6} and \ef{eq:2.7}, we are able to apply Proposition \ref{prop:2.1}
to conclude that $\xi_n \to \xi$ a.e. in $\R^{N+1}$ and so
$\n u_n(x) \to \n u(x)$ a.e. in $\R^N$.

Next we consider the case $m\in (-\phi_0,0)$.
In this case, we first find that
\begin{align*}
o_n(1) &= \big( \Psi_m'(u_n)-\Psi_m'(u) \big)[u_n-u] \\
&= \big( \tilde{\Psi}_m'(u_n)-\tilde{\Psi}_m'(u) \big)[u_n-u]
-m \irn | \nabla (u_n-u) |^2\,dx.
\end{align*}
Putting $\beta_m(\xi)= \left( \phi \left( \frac{|\xi|^2}{2} \right)+m \right) 
\frac{\xi}{\sqrt{2}}$, one gets from $m<0$ that
\begin{equation} \label{eq:2.8}
\limsup_{n \to +\infty} \irn \langle \beta_m(\xi_n)-\beta_m(\xi), \xi_n-\xi \rangle \,dx \le 0.
\end{equation}
Moreover since $m>-\phi_0$, we have from \eqref{phi2'} that
the function $t \mapsto \left( \phi \left( \frac{t^2}{2} \right) +m \right) \frac{t}{\sqrt{2}}$ 
is increasing on $[0,+\infty)$. 
Arguing as done previously, this implies that 
\begin{equation} \label{eq:2.9}
\langle \beta_m(\xi)-\beta_m(\bar{\xi}),\xi-\bar{\xi} \rangle >0
\quad \hbox{for any} \ \xi, \bar{\xi} \ \hbox{with} \ \xi \ne \bar{\xi}.
\end{equation}
Then from \ef{eq:2.8} and \ef{eq:2.9}, we conclude as above.
\end{proof}

Finally we establish the following Brezis-Lieb type convergence result.

\begin{lemma} \label{lem:2.3}
Assume \eqref{phi1} and \eqref{phi2} when $m\in [0,+\infty)$, or \eqref{phi2'} when $m\in (-\phi_0,0)$. 
If $\{ u_n \} \subset \H$ satisfies $u_n \rightharpoonup u$ weakly in $\H$,
$\nabla u_n(x) \to \nabla u(x)$ a.e. in $\R^N$ and
\[
\limsup_{n \to +\infty} \Psi_m(u_n) \le \Psi_m(u). \]
Then, up to a subsequence, $u_n \to u$ strongly in $\H$.
\end{lemma}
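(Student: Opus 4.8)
The plan is to combine the almost-everywhere convergence of the gradients with a Fatou-type argument to upgrade the limsup inequality on $\Psi_m$ into an actual limit, and then extract strong convergence from uniform convexity-type estimates. I would treat the case $m \in [0,+\infty)$ first. Since $u_n \weakto u$ in $\H$, by Rellich's theorem (locally) and a diagonal argument one has, up to a subsequence, $u_n(x) \to u(x)$ a.e. in $\RN$; together with the hypothesis $\n u_n(x) \to \n u(x)$ a.e., the integrand $\Quasn$ converges pointwise a.e. to $\Quas$. Moreover $\Phi \ge 0$ and, by \eqref{phi1}, $\Phi(s) \ge \phi_0 s$, so $\Quasn \ge \frac{\phi_0}{2}(u_n^2 + |\n u_n|^2) \ge 0$; hence Fatou's lemma gives
\[
\Psi_0(u) \le \liminf_{n \to +\infty} \Psi_0(u_n).
\]
Adding $\frac{m}{2}\irn u^2\,dx = \frac m2 \lim_n \irn u_n^2\,dx$ (strong $L^2_{\rm loc}$ plus a standard tightness/decay argument, or simply weak lower semicontinuity of the $L^2$ norm combined with the reverse bound below) is the delicate bookkeeping point; the cleanest route is to note that $\Psi_m(v) = \irn \bigl(\Quas[v] + \frac m2 v^2\bigr)\,dx$ with nonnegative integrand when $m\ge 0$, so Fatou directly yields $\Psi_m(u) \le \liminf_n \Psi_m(u_n)$. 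Combined with the hypothesis $\limsup_n \Psi_m(u_n) \le \Psi_m(u)$, we conclude $\Psi_m(u_n) \to \Psi_m(u)$, and in fact the full sequence of nonnegative integrands converges in $L^1(\RN)$ by the generalized dominated convergence / Brezis–Lieb mechanism: writing $f_n := \Quasn + \frac m2 u_n^2 \ge 0$, $f_n \to f$ a.e. and $\irn f_n \to \irn f$ force $f_n \to f$ in $L^1(\RN)$, hence $\{f_n\}$ is uniformly integrable and tight.

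Next I would convert $L^1$-convergence of $f_n$ into norm convergence in $\H$. From the lower bound $f_n \ge \frac{\phi_0}{2}\bigl(1 + \tfrac{m}{\phi_0}\bigr)(u_n^2 + |\n u_n|^2)$ — valid since $\Phi(s) \ge \phi_0 s$ and $m > -\phi_0$, with the constant $c_0 := \tfrac{\phi_0 + m}{2} > 0$ — one deduces that $\{u_n^2 + |\n u_n|^2\}$ is uniformly integrable and tight in $L^1(\RN)$, because it is dominated (up to the constant $1/c_0$) by $\{f_n\}$ on the set where $u_n^2+|\n u_n|^2$ is large... more carefully: $c_0(u_n^2+|\n u_n|^2) \le f_n$ pointwise, so $\sup_n \int_{|u_n|^2+|\n u_n|^2 > M} (u_n^2+|\n u_n|^2) \le \frac 1{c_0}\sup_n \int_{\{\cdots\}} f_n \to 0$ as $M\to\infty$ by uniform integrability of $f_n$, and similarly for tightness $\int_{|x|>R}$. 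Hence $\{u_n^2+|\n u_n|^2\}$ is uniformly integrable and tight; combined with $u_n^2+|\n u_n|^2 \to u^2+|\n u|^2$ a.e. (which follows from the pointwise convergences), the Vitali convergence theorem gives $u_n^2 + |\n u_n|^2 \to u^2 + |\n u|^2$ in $L^1(\RN)$, i.e. $\|u_n\|_{\H} \to \|u\|_{\H}$. Since a Hilbert space is uniformly convex and $u_n \weakto u$, norm convergence plus weak convergence implies $u_n \to u$ strongly in $\H$, as claimed.

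For the case $m \in (-\phi_0, 0)$ the sign bookkeeping changes but the structure is identical: here \eqref{phi2'} is the relevant convexity, and one works with the nonnegative integrand $g_n := \Phi\bigl(\tfrac{u_n^2+|\n u_n|^2}{2}\bigr) + \tfrac m2 u_n^2$, which is $\ge \tfrac{\phi_0+m}{2}u_n^2 + \tfrac{\phi_0}{2}|\n u_n|^2 \ge c_0(u_n^2+|\n u_n|^2) \ge 0$ with $c_0 = \tfrac{\phi_0+m}{2}>0$ again, using $\Phi(s)\ge\phi_0 s$ for the gradient part separately. So $g_n \ge 0$, $g_n \to g$ a.e., and $\limsup_n \irn g_n = \limsup_n \Psi_m(u_n) \le \Psi_m(u) = \irn g \le \liminf_n \irn g_n$ by Fatou, whence $\irn g_n \to \irn g$; the same Vitali argument then yields $\|u_n\|_\H \to \|u\|_\H$ and strong convergence follows from uniform convexity. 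The main obstacle — and the step that deserves care in the write-up — is justifying the passage from $L^1$-convergence of the energy densities $f_n$ (resp. $g_n$) to equi-integrability and tightness of $u_n^2+|\n u_n|^2$ and hence to norm convergence; everything else (pointwise convergence, Fatou, uniform convexity) is routine. One should also make explicit at the outset the reduction "up to a subsequence $u_n \to u$ a.e.", since the statement only claims a subsequential conclusion.
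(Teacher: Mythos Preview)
Your argument is correct and is a genuinely different route from the paper's. The paper applies the Brezis--Lieb lemma to the convex function $j(s)=\Phi(s^2)$ (this is where \eqref{phi2} enters) to obtain the splitting $\Psi_0(u_n)-\Psi_0(u)-\Psi_0(u_n-u)\to 0$, and from this and the hypothesis deduces $\Psi_m(u_n-u)\to 0$ directly, which by \eqref{phi1} forces $\|u_n-u\|\to 0$. Your approach bypasses Brezis--Lieb entirely: Fatou plus the $\limsup$ hypothesis gives $\int f_n\to\int f$, Scheff\'e's lemma (what you call the ``Brezis--Lieb mechanism'', but it is really just Scheff\'e) yields $f_n\to f$ in $L^1$, the pointwise domination $c_0(u_n^2+|\nabla u_n|^2)\le f_n$ transfers uniform integrability and tightness, and Vitali plus the Radon--Riesz property of Hilbert spaces closes the argument. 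A pleasant by-product is that your proof never uses the convexity assumptions \eqref{phi2}/\eqref{phi2'} --- only the two-sided bound \eqref{phi1} is needed --- so in this isolated lemma your hypotheses are weaker. The paper's route, on the other hand, delivers the sharper intermediate conclusion $\Psi_m(u_n-u)\to 0$ and treats both signs of $m$ with the same Brezis--Lieb machinery that is already in play elsewhere. One cosmetic slip: for $m\ge 0$ your constant should be $c_0=\phi_0/2$ rather than $(\phi_0+m)/2$, since the gradient part of the lower bound only carries the coefficient $\phi_0/2$; the argument is unaffected.
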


\begin{proof}
Again, we distinguish the cases $m \in [0,+\infty)$ and $m \in (-\phi_0,0)$.
First we suppose that $m \in [0,+\infty)$. 
One finds that 
\begin{equation} \label{eq:2.10}
\Psi_m(u_n)-\Psi_m(u)-\Psi_m(u_n-u) 
= \Psi_0(u_n)-\Psi_0(u)-\Psi_0(u_n-u)
+m \irn u (u_n-u) \,dx. 
\end{equation}
Now setting 
\[
f_n(x) := \left( \frac{u_n(x)^2 +|\n u_n(x)|^2}{2} \right)^{\frac{1}{2}}, \ 
f(x) := \left( \frac{u(x)^2 +|\n u(x)|^2}{2} \right)^{\frac{1}{2}}, \]
we may assume that $f_n(x) \to f(x)$ a.e. in $\R^N$.
Moreover putting $j(s)=\Phi(s^2)$, one knows from \eqref{phi2} that
$j(s)$ is continuous, convex on $\R$ and $j(0)=0$.
Then by the Brezis-Lieb lemma \cite[Theorem 2 and Examples (b)]{BrLi},
it follows that
\[
\lim_{n \to +\infty} \irn | j(f_n)-j(f)-j(f_n-f)| \,dx =0 \]
and hence
\begin{equation} \label{eq:2.11}
\lim_{n \to +\infty} \big\{ \Psi_0(u_n)-\Psi_0(u)-\Psi_0(u_n-u) \big\}=0.
\end{equation}
From \ef{eq:2.10} and \ef{eq:2.11}, $\Psi_m \ge 0$ and by the assumptions of this lemma,
we get
\[
0 \le \liminf_{n \to +\infty} \Psi_m(u_n-u) \le \limsup_{n \to +\infty} \Psi_m(u_n-u)
= \limsup_{n \to +\infty} \big\{ \Psi_m(u_n)-\Psi_m(u) +o_n(1) \big\} \le 0, 
\]
from which we conclude
\[
\lim_{n \to +\infty} \Psi_m(u_n-u)=0. \]
Since, from \ef{eq:2.1}, $0 \le \Psi_0(u_n-u) \le \Psi_m(u_n-u)$ when $m \in [0,+\infty)$,
this also yields that
\[
\lim_{n \to +\infty} \Psi_0(u_n-u) =0. \]
Then from \eqref{phi1}, we conclude that
$u_n - u \to 0$ in $\H$, as desired.

Next we assume that $m \in (-\phi_0,0)$.
First one observes that
\[
\Psi_m(u_n)-\Psi_m(u)-\Psi_m(u_n-u)
= \tilde{\Psi}_m(u_n)-\tilde{\Psi}_m(u)-\tilde{\Psi}_m(u_n-u)
-m \irn \nabla u \cdot \nabla (u_n-u) \,dx. \]
Letting $j_m(s)=\Phi(s^2)+m s^2$, we have from \eqref{phi2'} that
$s \mapsto j_m(s)$ is convex.
Then we can apply the Brezis-Lieb lemma to obtain
\[
\lim_{n \to +\infty} \big\{ \tilde{\Psi}_m(u_n)-\tilde{\Psi}_m(u)-\tilde{\Psi}_m(u_n-u) 
\big\} =0. \]
In a similar argument as the case $m \in [0,+\infty)$, it follows that
\[
\lim_{n \to +\infty} \Psi_m(u_n-u) =0. \]
Since, from \ef{eq:2.2} and \eqref{phi1}, 
\[
\Psi_m(u) \ge \tilde{\Psi}_m(u) \ge \frac{\phi_0+m}{2} \irn \left(|\nabla u|^2 +u^2 \right)\,dx 
\quad \hbox{if} \ m \in (-\phi_0,0), \]
we deduce that $u_n -u \to 0$ in $\H$.
\end{proof}

\section{Variational setting and existence of a non-negative non-trivial solution}\label{se3}

In this section, 
we perform a variational setting of \ef{eq:1} and prove the existence of a 
non-negative non-trivial solution of \ef{eq:1}.

First as in \cite{AP, BL, PW}, 
we decompose the nonlinear term $g$ as follows.
Let $s_0:= \min \{ s\in [\zeta, +\infty) \ ; \ g(s)=0 \}$, 
$s_0=+\infty$ if $g(s) \ne 0$ for any $s \ge \zeta$ and
$\tilde{g}:\R \to \R$ be the continuous function such that
\begin{equation} \label{eq:3.1}
\tilde g(s)=\left\{
\begin{array}{ll}
g(s) &\hbox{ on } [0,s_0],
\\
0 &\hbox{ on } \R_+\setminus [0,s_0]
\end{array}
\right. \quad \hbox{for} \ s \ge 0.
\end{equation}
For $s <0$, $\tilde{g}$ is defined by $\tilde{g}(s)=-g(-s)$.
As we will see in Proposition \ref{prop:3.1}, any weak solution $u \in \H$
of \ef{eq:1} with $\tilde{g}$ in the place of $g$ satisfies $|u| \le s_0$.
Thus hereafter we may replace $g$ by $\tilde{g}$, so that
$g$ fulfills \ef{g1}, \ef{g2}, \ef{g4} and a stronger condition
\begin{equation} \label{eq:3.2}
\lim_{s\to\pm \infty} \frac{|g(s)|}{|s|^{2^*-1}}=0.
\end{equation}

Next we put
\begin{equation*}
g_1(s):=
\begin{cases}
\left(g(s)+ms \right)^+ & \hbox{for }s\ge 0,
\\
0 & \hbox{for }s< 0,
\end{cases}
\end{equation*}
$g_2(s):=g_1(s)-g(s)$ for $s\ge 0$ and 
extend $g_2(s)$ as an odd function for $s<0$.
Then from \ef{g2} and \ef{eq:3.2}, one has
\begin{align}
\lim_{s\to 0} \frac{g_1(s)}{s} &= 0,\label{eq:3.3} \\
\lim_{s\to\pm\infty} \frac{g_1(s)}{|s|^{2^*-1}}&=0 \label{eq:3.4}
\end{align}
and
\begin{equation} \label{eq:3.5}
g_2(s) \ge ms \quad \hbox{for} \ s \ge 0.
\end{equation}
We set $G_i(s)=\int_0^s g_i(\tau) \,d\tau$, $i=1,2$.
Observing that $G_1(s)=0$ for $s<0$, 
we also deduce that, for any $\eps>0$, there exists $C_{\eps}>0$ such that
\begin{equation} \label{eq:3.6}
0 \le g_1(s) \le \eps |s| + C_\eps |s|^{2^*-1} \quad \hbox{for} \ s\in \R,
\end{equation}
\begin{equation} \label{eq:3.7}
0 \le G_1(s) \le \frac \eps 2s^2 + \frac {C_\eps} {2^*} |s|^{2^*} \quad \hbox{for} \ s\in \R.
\end{equation}
Moreover from \ef{g2}, \ef{eq:3.2} and \ef{eq:3.7}, 
for suitable $c_1$, $c_2>0$, it holds that
\begin{equation} \label{eq:3.8}
\frac{ms^2}{2} \le G_2(s)=G_2(|s|)
\le G_1(|s|)+|G(|s|)|
\le c_1|s|^2+c_2|s|^{2^*} \quad \hbox{for} \ s\in \R.
\end{equation}

Under these preparations, we define the functional
\begin{equation*}
I(u):= \irn \Phi \left(\frac{u^2+|\n u|^2 }2\right) \,dx 
+\irn G_2(u) \,dx - \irn G_1(u) \,dx.
\end{equation*}
Then from \eqref{phi1}, \ef{eq:3.7} and \ef{eq:3.8}, $I$ is well-defined 
and $C^1$ on $\H$.

\begin{proposition} \label{prop:3.1}
Assume \eqref{g1}-\eqref{g3} and \eqref{phi1}.
If $u \in \H$ is a critical point of $I$, then $u$ is a weak solution of \ef{eq:1}.
\end{proposition}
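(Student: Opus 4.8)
The plan is to show in two moves that a critical point $u$ of $I$ actually solves \eqref{eq:1} with the \emph{original} $g$: first identify the Euler--Lagrange equation of $I$ as the truncated problem, and then prove that $|u|\le s_0$, so that the truncation is inactive. If $s_0=+\infty$ the truncation is trivial and there is nothing to do, so I would assume from the outset $s_0<+\infty$.

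For the first move I would simply differentiate $I$. Using \eqref{phi1} and \eqref{eq:3.7}--\eqref{eq:3.8} we already know that $I\in C^1(\H,\R)$, and
\[
I'(u)[\varphi]=\irn\quas\,(u\varphi+\n u\cdot\n\varphi)\,dx+\irn\big(g_2(u)-g_1(u)\big)\varphi\,dx,\qquad\varphi\in\H.
\]
Since by construction $g_1-g_2$ equals the (truncated) nonlinearity $g$ of Section \ref{se3}, the condition $I'(u)=0$ says exactly that $u$ is a weak solution of $-\Div\{\quas\n u\}+\quas u=g(u)$ in $\RN$.

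For the second move — the heart of the statement — I would test this identity with $(u-s_0)^+$. This function lies in $\H$ because $0\le(u-s_0)^+\le|u|\in L^2(\RN)$ pointwise and its weak gradient is $\n u\,\mathbf 1_{\{u>s_0\}}\in L^2(\RN)$, so it is admissible. Since $s_0\ge\zeta>0$ and $g$ vanishes for $s\ge s_0$ after the truncation \eqref{eq:3.1}, the right-hand side of $I'(u)[(u-s_0)^+]=0$ is zero, while on $\{u>s_0\}$ one has $\n u\cdot\n(u-s_0)^+=|\n(u-s_0)^+|^2\ge0$, $u(u-s_0)^+\ge0$ (because $u>s_0>0$ there), and $\quas\ge\phi_0>0$ by \eqref{phi1}. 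Hence $\n(u-s_0)^+\equiv0$, so $(u-s_0)^+$ is constant and, belonging to $L^2(\RN)$, vanishes identically; thus $u\le s_0$ a.e. The symmetric argument with the test function $-(-u-s_0)^+\in\H$, together with the oddness of $g$ from \eqref{g1}, gives $u\ge-s_0$ a.e. Therefore $|u|\le s_0$, the truncated $g(u)$ agrees with the original nonlinearity evaluated at $u$, and the equation obtained in the first move is exactly \eqref{eq:1}.

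I expect the only points needing care to be the admissibility of $(u-s_0)^+$ as a test function (the elementary $L^2$ bound above) and the sign of the zeroth-order term $\irn\quas\,u(u-s_0)^+\,dx$, for which one uses both $\phi_0>0$ from \eqref{phi1} and the strict positivity $s_0\ge\zeta>0$; the remaining computations are routine.
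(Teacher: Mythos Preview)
Your identification of the Euler--Lagrange equation is where the argument breaks. By construction $g_1(s)-g_2(s)=g(s)$ holds only for $s\ge 0$: recall that $g_1(s)=0$ for $s<0$ while $g_2$ is extended as an odd function, so for $s<0$ one computes
\[
g_1(s)-g_2(s)=g_2(-s)=g_1(-s)-g(-s)=g_1(-s)+g(s),
\]
and the extra term $g_1(-s)=\big(g(-s)-ms\big)^+$ does not vanish in general. Hence a critical point of $I$ is a weak solution of
\[
-\Div\left\{\quas\n u\right\}+\quas u=g_1(u)-g_2(u),
\]
which coincides with \eqref{eq:1} only on the set $\{u\ge 0\}$. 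Showing $|u|\le s_0$ is therefore not enough; you must first prove $u\ge 0$. This is exactly what the paper does, testing with $u^-=\min\{u,0\}$ and using that $g_1(u)u=0$ on $\{u<0\}$ together with $g_2(s)s\ge ms^2$ from \eqref{eq:3.5} and $\phi_0+m>0$. Your ``symmetric argument'' with $-(-u-s_0)^+$ cannot substitute for this, precisely because $g_1-g_2$ is not odd; invoking the oddness of $g$ in \eqref{g1} does not help, since $g$ is not what appears on the right-hand side.

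Your argument for $u\le s_0$, by contrast, is correct and is a nice elementary alternative to the paper's appeal to the weak maximum principle from \cite{PuS}. On $\{u>s_0\}$ one has $u>0$, so there $g_1(u)-g_2(u)=g(u)=0$ and your computation with $(u-s_0)^+$ goes through unchanged. But this step should come \emph{after} establishing $u\ge 0$, not in place of it.
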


\begin{proof}
It suffices to show that $0 \le u \le s_0$.
Letting $u^-:=\min \{0,u\}$, we have $u^-\in \H$ and hence $I'(u)[u^-]=0$. 
We denote $\O:=\{x\in \RN \ ; \ u(x)<0\}$. 
Then from \eqref{phi1}, \ef{eq:3.5} and the fact $g_1(s)=0$ for $s<0$ from \ef{eq:3.1},
one finds that
\begin{align*}
\phi_0 \|\n u\|^2_{L^2(\O)} + (\phi_0+m) \|u\|^2_{L^2(\O)}
&\le \int_\O \phi \left(\frac{u^2+|\n u|^2 }2\right) \left(u^2+|\n u|^2\right) \,dx
+\int_\O g_2(u)u \,dx \\
& = \int_\O g_1(u)u \,dx =0,
\end{align*}
by which, since $\phi_0+m>0$, we deduce that $u^-=0$ on $\RN$.

Next we put $\O_0:= \{ x\in \RN \ ; \ u(x) >s_0 \}$
and assume that $\O_0 \ne \emptyset$.
Since $g(u)=0$ on $\O_0$, it follows that 
\[
\quas u - g(u) \ge 0 \quad \hbox{on} \ \O_0.\]
This implies that $u \in \H$ is a weak solution of the differential inequality:
\[
\Div \bm{A}(u, \n u) \ge 0 \quad \hbox{in} \ \O_0,
\]
where $\bm{A}(u,\bp)=\quasi \bp$ for $(u,\bp) \in \R \times \RN$.
From \eqref{phi1}, it follows that
\[
\bm{A}(u,\bp) \cdot \bp = \quasi | \bp|^2 \ge \phi_0 |\bp|^2, \]
and hence we can apply the weak maximum principle \cite[Theorem 3.2.1]{PuS}
to conclude that
\[
\max_{\O_0} u(x) = \max_{\partial \O_0} u(x).\]
This is a contradiction to the definition of $\O_0$.
Thus it holds that $\O_0 = \emptyset$ and $u(x) \le s_0$ a.e. in $\RN$.
This completes the proof.
\end{proof}

Hereafter, we work on the function space:
\begin{equation*}
H^1_r(\R^N) =\{ u\in\H \ ;\ u \hbox{ is radial} \}
\end{equation*}
and set $\| u\|^2:= \irn (|\n u|^2 +u^2) \,dx$.
Next we establish the mountain pass geometry.

\begin{lemma} \label{lem:3.2}
Assume \eqref{g1}-\eqref{g4} and  \eqref{phi1}.
Then the functional $I: H_r^1(\RN) \to \R$ has the mountain pass geometry, i.e.
\begin{itemize}
\item[\rm(i)] there exist $\alpha$, $\rho>0$ such that
$I(u) \ge \alpha$ for $\|u\|=\rho$;
\item[\rm(ii)] there exists $z \in \Hr $ with $\|z \|>\rho$ such that $I(z)<0$. 
\end{itemize}
\end{lemma}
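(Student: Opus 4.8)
The plan is to verify the two geometric conditions directly from the structure of $I$ and the growth estimates \eqref{eq:3.6}--\eqref{eq:3.8} together with \eqref{phi1}. For part (i), the idea is that near $u=0$ the dominant quadratic contribution is controlled from below: by \eqref{phi1} we have $\Phi\bigl(\frac{u^2+|\n u|^2}{2}\bigr)\ge \frac{\phi_0}{2}(u^2+|\n u|^2)$, and by \eqref{eq:3.8} the $G_2$-term satisfies $G_2(u)\ge \frac{m}{2}u^2$, so
\[
I(u)\ge \frac{\phi_0}{2}\|\n u\|_{L^2}^2 + \frac{\phi_0+m}{2}\|u\|_{L^2}^2 - \irn G_1(u)\,dx.
\]
Since $\phi_0+m>0$ by \eqref{g2}, the quadratic part is coercive in $\|\cdot\|$. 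For the $G_1$-term I would use \eqref{eq:3.7} with $\eps$ small, the Sobolev embedding $\Hr \hookrightarrow L^{2^*}(\RN)$, to get $\irn G_1(u)\,dx \le \frac{\eps}{2}\|u\|_{L^2}^2 + C\|u\|^{2^*}$. Choosing $\eps$ small enough that $\frac{\eps}{2}$ is absorbed, we obtain $I(u)\ge c_0\|u\|^2 - C\|u\|^{2^*}$ for some $c_0>0$; since $2^*>2$, there is $\rho>0$ small with $I(u)\ge \alpha:=\frac{c_0}{2}\rho^2>0$ whenever $\|u\|=\rho$.

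For part (ii), the point is to exploit assumption \eqref{g4}: there is $\zeta>0$ with $G(\zeta)>\Phi(\frac{\zeta^2}{2})$. Following the classical Berestycki--Lions device, I would take a fixed radial cutoff profile, e.g. $w_L(x)=\zeta$ for $|x|\le L$, $w_L(x)=\zeta(L+1-|x|)$ for $L\le|x|\le L+1$, and $w_L(x)=0$ for $|x|\ge L+1$, and estimate $I(w_L)$ as $L\to+\infty$. The bulk region $|x|\le L$ contributes $\omega_N L^N\bigl(\Phi(\frac{\zeta^2}{2}) - G(\zeta)\bigr)$, which is a negative multiple of $L^N$ by \eqref{g4}, noting that $G = G_1 - G_2$ on the relevant range so $\irn(G_2(w_L)-G_1(w_L))\,dx = -\irn G(w_L)\,dx$ where $|w_L|\le\zeta\le s_0$; meanwhile the annular shell $L\le|x|\le L+1$ contributes at most a multiple of its volume $\sim L^{N-1}$, using \eqref{phi1} to bound $\Phi$ from above by $\frac{\phi_1}{2}(w_L^2+|\n w_L|^2)$ and \eqref{eq:3.7}, \eqref{eq:3.8} to bound the nonlinear terms. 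Hence $I(w_L) \le -c L^N + C L^{N-1} \to -\infty$, so for $L$ large, $z:=w_L$ satisfies $I(z)<0$, and by rescaling $L$ further we also arrange $\|z\|>\rho$.

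The main obstacle, though a mild one, is bookkeeping around the decomposition $g=g_1-g_2$: one must check that on the support of the test function (where $0\le w_L\le \zeta \le s_0$, using that $g$ has been replaced by $\tilde g$) the identity $G_2(s)-G_1(s)=-G(s)$ indeed holds, so that the negative sign in \eqref{g4} is transmitted to $I(w_L)$; and one must confirm that the quadratic-in-$\Phi$ lower bound in (i) together with the sign of $G_2\ge \frac{m}{2}u^2$ genuinely yields the coercive constant $\phi_0+m>0$ rather than something that could vanish. Both are straightforward given \eqref{phi1}, \eqref{g2}, \eqref{g4} and the estimates already recorded, so no Pohozaev identity or compactness input is needed here — this lemma is purely about the shape of $I$.
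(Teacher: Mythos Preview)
Your proposal is correct and follows essentially the same argument as the paper: the lower bound in (i) via \eqref{phi1}, \eqref{eq:3.7}, \eqref{eq:3.8} and Sobolev, and the Berestycki--Lions test function $w_L$ in (ii) with the bulk/annulus split driven by \eqref{g4}. The only cosmetic difference is that on the annulus the paper bounds $\Phi\bigl(\tfrac{w_L^2+|\nabla w_L|^2}{2}\bigr)$ by $\Phi(\zeta^2)$ using the monotonicity of $t\mapsto\Phi(t^2)$, whereas you use the linear upper bound $\Phi(s)\le\phi_1 s$ from \eqref{phi1}; both give an $O(L^{N-1})$ contribution, so the conclusion is the same.
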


\begin{proof}
(i). \ 
From \eqref{phi1}, \ef{eq:3.7} and \ef{eq:3.8}, 
taking $\eps \in (0, \phi_0+m)$, we have
\begin{align*}
I(u) & \ge \frac{\phi_0}{2} \irn( |\n u|^2 + u^2) \,dx 
+\irn G_2 (u) \,dx -\irn G_1 (u) \,dx \\
& \ge \frac{\phi_0}{2} \| \n u \|^2_{L^2} 
+ \frac{\phi_0+m-\eps}{2} \| u \|^2_{L^2} - \frac{C_\eps}{2^*} \irn |u|^{2^*} \,dx
\quad \hbox{for any} \ u \in H_r^1(\RN).
\end{align*}
Then, by the Sobolev inequality, 
there exist $\alpha$, $\rho>0$ such that $I(u) \ge \alpha$ for $\| u \|= \rho$.

(ii). \
Following \cite{BL}, for any $R>0$, we set $w_R\in \Hr$ so that
\[
w_R(x)=
\begin{cases}
\zeta & \hbox{ if }|x|\le R, \\
\zeta(R+1-|x|) & \hbox{ if }R\le |x|\le R+1, \\
0 & \hbox{ if }|x|\ge R+1,
\end{cases}
\]
where $\zeta$ is defined in \eqref{g4}.
Using the monotonicity of the map $t \mapsto \Phi(t^2)$ on $[0,+\infty)$, we have
\begin{align*}
I(w_R)&= \irn \Phi \left(\frac{w_R^2 + |\n w_R|^2}2\right) \,dx
+\irn G_2(w_R) \,dx -\irn G_1(w_R) \,dx \\
&=\int_{B_R} \Phi \left(\frac{\zeta^2}2\right) \,dx
+\int_{B_{R+1}\setminus B_R} \Phi \left(\frac{w_R^2 + |\n w_R|^2}2\right) \,dx \\
&\qquad+\int_{B_R} G_2(\zeta) \,dx
+\int_{B_{R+1}\setminus B_R} G_2(w_R) \,dx 
-\int_{B_R} G_1(\zeta) \,dx 
-\int_{B_{R+1}\setminus B_R} G_1(w_R) \,dx \\
&\le \int_{B_R} \left\{ \Phi \left(\frac{\zeta^2}2\right) +G_2(\zeta)- G_1(\zeta)\right\}
\,dx 
+\int_{B_{R+1}\setminus B_R} \left\{ \Phi \left(\zeta^2\right)
+G_2(w_R)-G_1(w_R) \right\} \,dx \\
&\le C \left\{ \Phi \left(\frac{\zeta^2}2\right) - G(\zeta)\right\} R^N 
+C \max_{s\in [0,\zeta] }\left| \Phi \left(\zeta^2\right)
- G(s)\right| R^{N-1}.
\end{align*}
By \eqref{g4}, we can find sufficiently large $R>0$ so that $I(w_R)<0$ and $\|w_R\|>\rho$.
Putting $z=w_R$, we finish the proof.
\end{proof}

By Lemma \ref{lem:3.2}, denoting
\begin{equation*}
\Gamma := \left\{ \g \in C\big([0,1],\Hr\big) \ ; \ \g(0)=0,
I(\g(1))< 0\right\},
\end{equation*}
we infer that $\G$ is non-empty and
\begin{equation*}
c:=\inf_{\g \in \G }\max_{t\in[0,1]}
I(\g(t)) \ge \alpha>0.
\end{equation*}

Now, following \cite{HIT,jj}, we define the  functional $J:\R\times \Hr\to \R$  as 
\begin{align*}
J(\theta, u)&= I\big(u(e^{-\theta}\cdot )\big) \\
&=e^{N\theta} \irn \Phi \left(\frac{u^2+e^{-2\theta}|\n u|^2}{2}\right) \,dx
+e^{N\theta}\irn G_2(u) \,dx -e^{N\theta}\irn G_1(u) \,dx.
\end{align*}
With similar arguments of Lemma \ref{lem:3.2}, 
$J$ also has the mountain pass geometry and we can define its mountain pass level as
\[
\tilde c:=\inf_{(\theta,\g) \in \Sigma\times \G} \max_{t\in [0,1]}J\big(\theta(t),\g(t)\big), 
\]
where 
\[
\Sigma:=\left\{\theta \in C\big([0,1],\R\big) \ ; \ \theta(0)=\theta(1)=0\right\}.
\]
Arguing as in \cite[Lemma 4.1]{HIT}, we derive the following.

\begin{lemma} \label{lem:3.3}
The mountain pass levels of $I$ and $J$ coincide, namely $c=\tilde c$.
\end{lemma}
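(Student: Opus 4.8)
The plan is to prove the two inequalities $c \le \tilde c$ and $\tilde c \le c$ separately, exploiting the fact that $J$ is an augmentation of $I$ by an extra scaling parameter $\theta$.

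\medskip

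\textbf{The inequality $\tilde c \le c$.} Given any $\gamma \in \Gamma$, I would simply pair it with the trivial path $\theta \equiv 0 \in \Sigma$. Since $J(0,u) = I(u(e^{0}\cdot)) = I(u)$, one has $\max_{t} J(0,\gamma(t)) = \max_t I(\gamma(t))$, and taking the infimum over $\gamma \in \Gamma$ (while restricting the infimum defining $\tilde c$ to pairs of the form $(0,\gamma)$) gives $\tilde c \le c$ directly. This direction is essentially free.

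\medskip

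\textbf{The inequality $c \le \tilde c$.} This is the substantive direction. Fix an arbitrary pair $(\theta,\gamma) \in \Sigma \times \Gamma$. The idea is to convert it into an admissible path for $I$ by absorbing the scaling into the function: define $\eta(t) := \gamma(t)(e^{-\theta(t)}\cdot) \in \Hr$. Since $\theta(0)=0$ and $\gamma(0)=0$, we get $\eta(0) = 0$; since $\theta(1)=0$, we get $\eta(1) = \gamma(1)$, so $I(\eta(1)) = I(\gamma(1)) < 0$. Continuity of $t \mapsto \eta(t)$ in $\Hr$ follows from joint continuity of $(\theta,u) \mapsto u(e^{-\theta}\cdot)$ on $\R \times \Hr$, which holds by standard density/approximation arguments using \eqref{phi1} and the growth bounds on $g_1,g_2$. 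Hence $\eta \in \Gamma$. Now the key identity is
\[
I(\eta(t)) = I\big(\gamma(t)(e^{-\theta(t)}\cdot)\big) = J\big(\theta(t),\gamma(t)\big),
\]
which is immediate from the definition of $J$. Therefore $\max_t I(\eta(t)) = \max_t J(\theta(t),\gamma(t))$, and consequently $c \le \max_t J(\theta(t),\gamma(t))$. Taking the infimum over all $(\theta,\gamma) \in \Sigma \times \Gamma$ yields $c \le \tilde c$.

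\medskip

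\textbf{Where the care is needed.} The only nontrivial point — and the one I expect the authors (and \cite[Lemma 4.1]{HIT}) to dwell on — is the continuity in $\Hr$ of the map $(\theta,u) \mapsto u(e^{-\theta}\cdot)$, or equivalently the continuity of $t \mapsto \eta(t)$; everything else is a bookkeeping check of boundary values and a one-line substitution in the definition of $J$. Continuity of dilations on $L^2$ and on $H^1$ is classical (approximate $u$ by smooth compactly supported functions, on which the dilation acts continuously and uniformly, then pass to the limit using that dilations are bounded operators with norm controlled by $e^{N|\theta|/2}$ and $e^{(N+2)|\theta|/2}$ respectively). Once this is in place, the composition $t \mapsto (\theta(t),\gamma(t)) \mapsto \gamma(t)(e^{-\theta(t)}\cdot)$ is continuous as a composition of continuous maps, so $\eta \in C([0,1],\Hr)$ and the argument closes. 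Thus $c = \tilde c$.
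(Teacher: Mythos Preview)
Your argument is correct and is precisely the standard one from \cite[Lemma 4.1]{HIT} that the paper invokes without reproducing. The only slip is the remark that continuity of $(\theta,u)\mapsto u(e^{-\theta}\cdot)$ relies on \eqref{phi1} and the growth of $g_1,g_2$ --- it does not; as you yourself note a few lines later, it is a purely functional-analytic fact about dilations acting on $H^1$.
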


Now, as a immediate consequence of Ekeland's variational principle, 
we have the result below, whose proof follows as in \cite[Lemma 2.3]{jj}.

\begin{lemma} \label{lem:3.4}
Let $\eps>0$. Suppose that $\eta \in \Sigma \times \G$ satisfies 
\[
\max_{t \in [0,1]}J( \eta(t))\le \tilde c+\eps.
\]
Then there exists $(\theta, u)\in \R\times \Hr$ such that
\begin{enumerate}
\item[\rm(i)] ${\rm dist}_{\R \times \Hr}\big((\theta,u),\eta([0,1])\big)
\le 2 \sqrt{\eps}$;
\item[\rm(ii)] $J(\theta,u)\in [c-\eps, c+\eps]$;
\item[\rm(iii)] $\|D J(\theta,u)\|_{\R \times (\Hr)'}\le 2 \sqrt{\eps}$.
\end{enumerate}
\end{lemma}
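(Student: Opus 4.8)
The plan is to produce $(\t,u)$ as a point sitting on a small perturbation of the path $\eta$, by combining Ekeland's variational principle on a suitable space of paths with a one‑parameter deformation argument, following the scheme of \cite{HIT,jj} adapted to the augmented functional $J$. First I would fix a complete metric space of paths. Writing $\eta=(\t_\eta,\g_\eta)$ and $z:=\g_\eta(1)$ (so $I(z)<0$), set
\[
\X:=\big\{(\t,\g)\in \Sigma\times C\big([0,1],\Hr\big)\ :\ \g(0)=0,\ \g(1)=z\big\},
\]
endowed with the uniform metric $\rho\big((\t_1,\g_1),(\t_2,\g_2)\big):=\max_{t\in[0,1]}\big(|\t_1(t)-\t_2(t)|+\|\g_1(t)-\g_2(t)\|\big)$. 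Then $\X$ is complete and contains $\eta$; the functional $\mathcal J(\xi):=\max_{t\in[0,1]}J(\xi(t))$ is continuous on $\X$ (since $J\in C^1(\R\times\Hr)$ is Lipschitz on bounded sets by \eqref{phi1}, \eqref{eq:3.7}, \eqref{eq:3.8}), and $\mathcal J\ge\tilde c$ on $\X$ because $\X\subset\Sigma\times\G$. Hence $\mathcal J(\eta)\le\tilde c+\eps\le\inf_\X\mathcal J+\eps$, so Ekeland's principle applies.

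Applying Ekeland with parameter $\sqrt\eps$ gives $\bar\eta=(\bar\t,\bar\g)\in\X$ with $\mathcal J(\bar\eta)\le\mathcal J(\eta)\le\tilde c+\eps$, $\rho(\bar\eta,\eta)\le\sqrt\eps$, and $\mathcal J(\xi)>\mathcal J(\bar\eta)-\sqrt\eps\,\rho(\xi,\bar\eta)$ for every $\xi\neq\bar\eta$. Let $B:=\{t\in[0,1]:J(\bar\eta(t))=\mathcal J(\bar\eta)\}$. Since $J(\bar\eta(0))=I(0)=0$ and $J(\bar\eta(1))=I(z)<0$ while $\mathcal J(\bar\eta)\ge\tilde c=c\ge\alpha>0$ (using Lemma \ref{lem:3.3}), the set $B$ is nonempty, compact and contained in the open interval $(0,1)$. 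I would then show that there is $t_0\in B$ with $\|DJ(\bar\eta(t_0))\|_{\R\times(\Hr)'}\le 2\sqrt\eps$.

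This last point I would prove by contradiction: if $\|DJ(\bar\eta(t))\|>2\sqrt\eps$ for all $t\in B$, then by continuity of $DJ$ along the path and a pseudo‑gradient construction one obtains a continuous field $t\mapsto V(t)\in\R\times\Hr$, supported in an open neighbourhood $\omega$ of $B$ with $\bar\omega\subset(0,1)$, such that $\|V(t)\|\le1$ and $DJ(\bar\eta(t))[V(t)]\le-2\sqrt\eps$ on $\omega$. For small $h>0$ the perturbed path $\xi_h:=\bar\eta+hV$ still lies in $\X$ (the perturbation vanishes near $t=0,1$) and $\rho(\xi_h,\bar\eta)\le h$; moreover, using that $J(\bar\eta(\cdot))$ is uniformly bounded away from $\mathcal J(\bar\eta)$ on $[0,1]\setminus\omega$ together with a first‑order expansion of $J$ on $\omega$, one gets $\mathcal J(\xi_h)<\mathcal J(\bar\eta)-\sqrt\eps\,\rho(\xi_h,\bar\eta)$ for $h$ small, contradicting the Ekeland estimate.

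Setting $(\t,u):=\bar\eta(t_0)$ then concludes the proof: property (iii) is the bound just obtained; (ii) holds because $J(\t,u)=\mathcal J(\bar\eta)\in[\tilde c,\tilde c+\eps]=[c,c+\eps]\subset[c-\eps,c+\eps]$; and (i) holds because $\dist_{\R\times\Hr}\big((\t,u),\eta([0,1])\big)$ is at most the distance between $\bar\eta(t_0)$ and $\eta(t_0)$, hence at most $\rho(\bar\eta,\eta)\le\sqrt\eps\le 2\sqrt\eps$. The main obstacle is the deformation step: converting the pointwise lower bound $\|DJ(\bar\eta(t))\|>2\sqrt\eps$ on the maximum set $B$ into a globally defined admissible perturbation of the entire path that lowers $\mathcal J$ by a definite multiple of $\sqrt\eps\,\rho$ while preserving the endpoint constraints. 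This relies on the pseudo‑gradient field and on the compactness fact that $J(\bar\eta(\cdot))$ stays strictly below its maximum outside a neighbourhood of $B$; the remaining verifications, including the continuity of $\mathcal J$ and the first‑order expansion, are routine.
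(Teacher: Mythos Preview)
Your argument is correct and is precisely the standard Ekeland-on-the-path-space argument that the paper invokes: the paper gives no proof of its own and simply states that the lemma ``follows as in \cite[Lemma 2.3]{jj}'' as an immediate consequence of Ekeland's variational principle, which is exactly the scheme you have written out in detail (Ekeland applied to $\mathcal J$ on the complete metric space $\X$ of admissible paths, followed by the pseudo-gradient deformation to locate a near-critical point on the maximum set $B$).
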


Arguing as in \cite[Proposition 4.2]{HIT}, 
by Lemmas \ref{lem:3.3} and \ref{lem:3.4}, the following proposition holds

\begin{proposition} \label{prop:3.5}
There exists a sequence $\{(\theta_n,u_n)\} \subset \R \times \Hr$ such that, 
as $n \to +\infty$, 
\begin{enumerate}
\item[\rm(i)] $\theta_n \to 0$;
\item[\rm(ii)] $J(\theta_n,u_n)\to c$;
\item[\rm(iii)] $\de_\theta J(\theta_n,u_n)\to 0$;
\item[\rm(iv)] $\de_u J(\theta_n,u_n)\to 0$ strongly in $(\Hr)'$. 
\end{enumerate}
\end{proposition}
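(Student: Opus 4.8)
The plan is to combine the level equality $c=\tilde c$ from Lemma~\ref{lem:3.3} with the quantitative Ekeland-type statement of Lemma~\ref{lem:3.4}, mimicking \cite[Proposition~4.2]{HIT}. First I would fix a minimizing sequence for $\tilde c$: by definition of $\tilde c$ choose, for each $n$, a path $\eta_n=(\theta^{(n)},\gamma^{(n)})\in \Sigma\times\G$ with $\max_{t\in[0,1]}J(\eta_n(t))\le \tilde c+\eps_n$, where $\eps_n\to 0^+$. Applying Lemma~\ref{lem:3.4} with $\eps=\eps_n$ yields a point $(\theta_n,v_n)\in\R\times\Hr$ satisfying ${\rm dist}_{\R\times\Hr}\big((\theta_n,v_n),\eta_n([0,1])\big)\le 2\sqrt{\eps_n}$, $J(\theta_n,v_n)\in[c-\eps_n,c+\eps_n]$, and $\|DJ(\theta_n,v_n)\|_{\R\times(\Hr)'}\le 2\sqrt{\eps_n}$. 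Reading off the components of $DJ$, item (iii) of Lemma~\ref{lem:3.4} immediately gives both $\partial_\theta J(\theta_n,v_n)\to 0$ and $\partial_u J(\theta_n,v_n)\to 0$ in $(\Hr)'$, which are conclusions (iii) and (iv); and item (ii) gives $J(\theta_n,v_n)\to c$, which is conclusion (ii).

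The only conclusion that needs a separate argument is (i), $\theta_n\to 0$. Here I would use the distance bound together with the structure of the paths $\eta_n$. Since $\gamma^{(n)}(0)=0$ and $I(\gamma^{(n)}(1))<0$, while $J(\theta,0)=0$ for every $\theta$ and $J(0,u)=I(u)$, one can exploit the fact that near the mountain pass level the $\theta$-coordinate along any near-optimal path cannot stray far from $0$. Concretely, using $\partial_\theta J(\theta,u)=N J(\theta,u)-e^{(N-2)\theta}\irn \phi\big(\tfrac{u^2+e^{-2\theta}|\n u|^2}{2}\big)|\n u|^2\,dx$ (or the analogous exact expression obtained by differentiating the displayed formula for $J$), combined with $J(\theta_n,v_n)\to c>0$ and $\partial_\theta J(\theta_n,v_n)\to 0$, one deduces that the gradient term $e^{(N-2)\theta_n}\irn \phi(\cdots)|\n v_n|^2\,dx$ stays bounded away from $0$ and bounded above; feeding this back, together with the ellipticity bound \eqref{phi1}, controls $\|\n v_n\|_{L^2}$ and hence, via the $J$-value, $\|v_n\|$. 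Then a compactness/continuity argument along the lines of \cite[Lemma~4.1 and Proposition~4.2]{HIT}, using that $\eta_n$ is a near-optimal path and that on such paths the admissible $\theta$-values concentrate near the point where the maximum is attained, forces $\theta_n\to 0$. Alternatively, and more cleanly, one notes that if $\theta_n\not\to 0$ along a subsequence, the distance estimate would place the corresponding path point $\eta_n(t_n)$ at a $\theta$-coordinate bounded away from $0$ with $J$-value near $c$, and one derives a contradiction with $c=\tilde c$ by reparametrizing (shifting the $\theta$-profile) to produce a strictly cheaper path for $I$.

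I expect the verification of (i) to be the only genuine obstacle; (ii)–(iv) are essentially a direct transcription of Lemma~\ref{lem:3.4}. The delicate point in (i) is making rigorous the heuristic that ``near-optimal mountain pass paths for $J$ have their $\theta$-coordinate close to $0$ at the maximizing time,'' which is exactly the content borrowed from \cite[Proposition~4.2]{HIT}; one must be careful that the one-parameter dilation group $\theta\mapsto u(e^{-\theta}\cdot)$ interacts correctly with the functional, in particular that the map $\theta\mapsto J(\theta,u)$ is, for fixed $u\ne 0$, concave-like enough (strictly concave in $e^\theta$ after the change of variable suggested by \eqref{phi2}) to pin down the location of its maximum. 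Once that structural fact is in place, conclusion (i) follows, and we are done.
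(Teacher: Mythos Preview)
Your derivation of (ii)--(iv) from Lemma~\ref{lem:3.4} is correct and matches the paper. The gap is in (i), and it is not a delicate structural fact about $\theta\mapsto J(\theta,u)$ at all: you have overlooked the one-line trick that makes $\theta_n\to 0$ immediate.

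The point is that you are free to \emph{choose} the near-optimal paths $\eta_n$, and the clean choice is $\theta^{(n)}\equiv 0$. Since $c=\tilde c$ by Lemma~\ref{lem:3.3}, take $\gamma_n\in\G$ with $\max_{t}I(\gamma_n(t))\le c+\eps_n$; then $\eta_n:=(0,\gamma_n)\in\Sigma\times\G$ satisfies $\max_t J(\eta_n(t))=\max_t I(\gamma_n(t))\le \tilde c+\eps_n$. Now the distance bound in Lemma~\ref{lem:3.4}(i) reads
\[
\dist_{\R\times\Hr}\big((\theta_n,v_n),\{0\}\times\gamma_n([0,1])\big)\le 2\sqrt{\eps_n},
\]
and projecting onto the $\R$-factor gives $|\theta_n|\le 2\sqrt{\eps_n}\to 0$. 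This is exactly what \cite[Proposition~4.2]{HIT} does and what the paper invokes.

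By contrast, your two proposed routes to (i) do not close. The first uses $\partial_\theta J(\theta_n,v_n)\to 0$ and $J(\theta_n,v_n)\to c$ to bound $e^{(N-2)\theta_n}\|\n v_n\|_{L^2}^2$ and then $\|v_n\|$, but nothing in that chain forces $\theta_n$ itself to be small: the identity $J(\theta,u)=I(u(e^{-\theta}\cdot))$ makes the pair $(\theta,u)$ and $(0,u(e^{-\theta}\cdot))$ indistinguishable at the level of $J$-value and derivative norms, so no amount of information about $J$, $\partial_\theta J$, $\partial_u J$ alone can pin down $\theta_n$. The second route (``reparametrize to get a cheaper path'') presupposes that the $\theta$-coordinate of a generic near-optimal path is small at the maximizing time, which is precisely what you are trying to prove and is false without the specific choice $\theta^{(n)}\equiv 0$. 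No concavity of $\theta\mapsto J(\theta,u)$ is needed, and indeed \eqref{phi2} does not yield it in any useful form here.
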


Next we recall the Strauss compactness lemma. 
(See \cite[Theorem A.1]{BL}, \cite{Str}.) 
It will be a fundamental tool in our arguments. 

\begin{lemma} \label{lem:3.6}
Let $P$ and $Q:\R\to\R$ be two continuous functions satisfying
\begin{equation*} 
\lim_{|s|\to+\infty}\frac{P(s)}{Q(s)}=0.
\end{equation*}
Suppose that $\{v_n\}$ and $v$ are measurable functions from $\RN$ to $\R$
such that
\begin{align*}
&\sup_{n \in \mathbb{N}} \int_\RN | Q(v_n(x))|\,dx <+\infty \quad \hbox{and} \quad 
P(v_n(x))\to v(x) \:\hbox{a.e. in }\RN. 
\end{align*}
Then $\|(P(v_n)-v)\|_{L^1(B)}\to 0$ for any bounded Borel set $B$.

Moreover, if we have also
\begin{align*}
\lim_{s\to 0}\frac{P(s)}{Q(s)} &=0 \quad \hbox{and} \quad
\lim_{x\to\infty}\sup_{n \in \mathbb{N}} |v_n(x)| = 0, 
\end{align*}
then $\|(P(v_n)-v)\|_{L^1(\RN)}\to 0.$
\end{lemma}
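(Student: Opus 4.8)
The plan is to prove Lemma \ref{lem:3.6} (the Strauss compactness lemma) by adapting the classical argument of \cite[Theorem A.1]{BL}. First I would fix $\eps>0$. From the hypothesis $\lim_{|s|\to+\infty} P(s)/Q(s)=0$, one may choose $M>0$ so large that $|P(s)|\le \eps |Q(s)|$ whenever $|s|\ge M$. On the complementary region $|s|\le M$, continuity of $P$ gives a uniform bound $|P(s)|\le C_M$. The key decomposition is then, for a bounded Borel set $B$,
\[
\int_B |P(v_n(x))-v(x)|\,dx
\le \int_{B\cap\{|v_n|\le M\}} |P(v_n)-v|\,dx
+ \int_{B\cap\{|v_n|>M\}} |P(v_n)|\,dx
+ \int_{B\cap\{|v_n|>M\}} |v|\,dx.
\]
The middle term is controlled by $\eps \sup_n \int_{\RN} |Q(v_n)|\,dx \le C\eps$. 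For the last term, one first notes $v\in L^1(B)$: indeed $P(v_n)\to v$ a.e. together with Fatou and the uniform $L^1$-bound on $Q(v_n)$ (splitting again at $|v_n|\le M$) shows $v$ is integrable on $B$, so the tail integral over the set where $|v_n|$ is large can be made small once we control the measure of that set. The measure of $\{|v_n|>M\}\cap B$ is bounded by $\frac{1}{q(M)}\sup_n\int_{\RN}|Q(v_n)|\,dx$ where $q(M):=\inf_{|s|\ge M}|Q(s)|$; here one should first enlarge $M$, if necessary, using that $P(s)/Q(s)\to 0$ forces $|Q(s)|$ to stay bounded away from $0$ for large $|s|$ (otherwise $P$ would have to vanish there too — a small technical point one handles by possibly modifying $Q$ on a bounded set, which does not affect the hypotheses), so that $q(M)\to\infty$ and the measure is small uniformly in $n$.

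Having fixed such an $M$, the first term is the substantive one. On $B\cap\{|v_n|\le M\}$ the functions $P(v_n)$ are uniformly bounded by $C_M$, and $P(v_n)\to v$ a.e.\ on $B$; since $B$ has finite measure, the bounded convergence theorem (equivalently, Egorov's theorem plus the uniform bound) gives $\int_{B\cap\{|v_n|\le M\}}|P(v_n)-v|\,dx\to 0$ as $n\to\infty$. Combining the three estimates yields $\limsup_{n\to\infty}\int_B|P(v_n)-v|\,dx\le C\eps$, and since $\eps$ was arbitrary this proves $\|P(v_n)-v\|_{L^1(B)}\to 0$.

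For the second, global, statement, the extra hypotheses are $\lim_{s\to 0}P(s)/Q(s)=0$ and $\lim_{|x|\to\infty}\sup_n|v_n(x)|=0$. The point is to control the tail $\int_{|x|>R}|P(v_n)-v|\,dx$ uniformly in $n$. Choose $\delta>0$ so small that $|P(s)|\le\eps|Q(s)|$ whenever $|s|\le\delta$; by the decay hypothesis there is $R>0$ with $|v_n(x)|\le\delta$ for all $|x|>R$ and all $n$. Hence on $|x|>R$ we have $|P(v_n(x))|\le\eps|Q(v_n(x))|$, so $\int_{|x|>R}|P(v_n)|\,dx\le\eps\sup_n\int_{\RN}|Q(v_n)|\,dx\le C\eps$; passing to the a.e.\ limit and using Fatou, $\int_{|x|>R}|v|\,dx\le C\eps$ as well. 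Together with the first part applied to the ball $B=B_R$, this gives $\limsup_n\|P(v_n)-v\|_{L^1(\RN)}\le C\eps$, hence the claim.

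The main obstacle, such as it is, is purely bookkeeping: arranging that one can simultaneously choose a single threshold $M$ (or $\delta$) making $|P|\le\eps|Q|$ on the relevant range \emph{and} making the measure of the exceptional set small, while also knowing $v\in L^1_{\mathrm{loc}}$ a priori. All of this is standard once one observes that $|Q|$ is bounded below away from zero for large arguments (modulo a harmless modification of $Q$ on a bounded set), so I expect no genuine difficulty — the lemma is essentially a quantified version of Vitali's convergence theorem tailored to the growth gap between $P$ and $Q$.
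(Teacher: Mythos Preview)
The paper does not prove Lemma \ref{lem:3.6}; it simply records the result and cites \cite[Theorem A.1]{BL} and \cite{Str}. Your proposal is precisely a sketch of the Berestycki--Lions argument, so in that sense the approaches coincide.

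One genuine technical point, however: your control of $\int_{B\cap\{|v_n|>M\}}|v|\,dx$ relies on the Chebyshev bound $|\{|v_n|>M\}\cap B|\le C/q(M)$ with $q(M)=\inf_{|s|\ge M}|Q(s)|$, and you claim $q(M)\to\infty$ after ``modifying $Q$ on a bounded set''. This does not follow from the hypotheses: for instance $Q(s)=2+\sin s$ and $P(s)=(2+\sin s)/s$ satisfy $P/Q\to 0$ at infinity while $q(M)\equiv 1$. The issue is at infinity, not on a bounded set, so your proposed fix does not apply. The clean way around this --- and the route that makes your closing remark about Vitali literal --- is to bypass the measure bound entirely: the pointwise inequality $|P(v_n)|\le \eps|Q(v_n)|+C_M$ (the second term coming from $\{|v_n|\le M\}$) gives, for any measurable $E\subset B$,
\[
\int_E |P(v_n)|\,dx \le \eps\sup_n\int_{\RN}|Q(v_n)|\,dx + C_M|E|,
\]
which is exactly uniform integrability of $\{P(v_n)\}$ on $B$. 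Together with the a.e.\ convergence $P(v_n)\to v$, Vitali's theorem yields $P(v_n)\to v$ in $L^1(B)$ directly, with no need to estimate $|\{|v_n|>M\}|$. Your argument for the global statement on $\RN$ is fine as written.
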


Now we are ready to prove Theorem \ref{main1}.

\begin{proof}[Proof of Theorem \ref{main1}]
First by Proposition \ref{prop:3.5}, 
there exists a sequence $\{(\theta_n,u_n)\} \subset \R \times \Hr$ such that
\begin{align} \label{eq:3.9}
&e^{N\theta_n} \irn \Phi \left(\frac{u_n^2+e^{-2\theta_n}|\n u_n|^2}{2}\right) \,dx
+e^{N\theta_n} \irn G_2(u_n) \,dx
-e^{N\theta_n}\irn G_1(u_n) \,dx \\
&=c+o_n(1), \notag
\end{align}
\begin{align} \label{eq:3.10}
& Ne^{N\theta_n}\irn \Phi \left(\frac{u_n^2+e^{-2\theta_n}|\n u_n|^2}{2}\right) \,dx
-e^{(N-2)\theta_n}\irn \phi 
\left(\frac{u_n^2+e^{-2\theta_n}|\n u_n|^2}{2}\right)|\n u_n|^2 \,dx \\
&+Ne^{N\theta_n}\irn G_2(u_n) \,dx -Ne^{N\theta_n}\irn G_1(u_n) \,dx =o_n(1), \notag
\end{align}
and, for all $\varphi\in \Hr$,
\begin{align} \label{eq:3.11}
&e^{N\theta_n}\irn \phi \left(\frac{u_n^2+e^{-2\theta_n}|\n u_n|^2}{2}\right)
\big(u_n \varphi +e^{-2\theta_n} \n u_n \cdot \n \varphi \big) \,dx \\
&+e^{N\theta_n}\irn g_2(u_n) \varphi \,dx -e^{N\theta_n} \irn g_1(u_n) \varphi\,dx 
=o_n(1)\| \varphi \|. 
\notag
\end{align}
By \ef{eq:3.9} and \ef{eq:3.10}, we have 
\begin{equation} \label{eq:3.12}
\frac{e^{(N-2)\theta_n}}N\irn \phi 
\left(\frac{u_n^2+e^{-2\theta_n}|\n u_n|^2}{2}\right)|\n u_n|^2 \,dx = c+o_n(1).
\end{equation}
From \eqref{phi1} and since $\theta_n \to 0$ as $n\to +\infty$, 
$\{u_n\}$ is bounded $D^{1,2}(\RN)$, namely 
\begin{equation*} 
\|\n u_n \|_{L^2} \le C \quad \hbox{ for some }C>0 \hbox{ and for all }n\ge 1. 
\end{equation*}
Moreover using \eqref{eq:3.7}, \ef{eq:3.8}, \ef{eq:3.10} and \eqref{phi1}, 
one also has
\begin{align*}
\frac{\phi_0+m }{2}\|u_n\|_{L^2}^2
&\le \irn \Phi \left(\frac{u_n^2+e^{-2\theta_n}|\n u_n|^2}{2}\right) \,dx 
+\irn G_2(u_n) \,dx \\
&=\frac{e^{-2\theta_n}}N\irn \phi \left(\frac{u_n^2+e^{-2\theta_n}|\n u_n|^2}{2}\right)|\n u_n|^2 \,dx
+\irn G_1(u_n) \,dx +o_n(1) \\
& \le \frac{2\phi_1}{N}\|\n u_n\|^2_{L^2}
+\frac \e 2\|u_n\|^2_{L^2}
+\frac{C_\e}{2^*}\|u_n\|_{L^{2^*}}^{2^*} +o_n(1).
\end{align*}
Choosing $\e>0$ sufficiently small so that $\phi_0+m-\e>0$ 
and taking in account the embedding of $D^{1,2}(\RN)$ into $L^{2^*}(\RN)$,
we find that $\{u_n\}$ is bounded also in $L^2(\RN)$ 
and hence 
\begin{equation} \label{eq:3.13}
\| u_n \| \le C \quad \hbox{ for some }C>0 \hbox{ and for all }n\ge 1. 
\end{equation}
This implies the existence of $u\in \Hr$ such that 
\begin{equation} \label{eq:3.14}
u_n\rightharpoonup u\;\hbox{weakly in }\H
\end{equation}
and
\begin{equation} \label{eq:3.15}
u_n(x)\to u(x)\;\hbox{a.e. in }\RN.
\end{equation}
Moreover by the radial lemma (see \cite{BL,Str}), 
it follows that 
\begin{equation} \label{eq:3.16}
|u_n(x)| \le C |x|^{-\frac{N-2}{2}} \| \n u_n \|_{L^2}, \ |x| \ge 1
\quad \hbox{for some} \ C>0 \ \hbox{independent of} \ n\in \N.
\end{equation}

Now from \ef{eq:3.3}, \ef{eq:3.4}, \ef{eq:3.13}, \ef{eq:3.15} and \ef{eq:3.16}, 
we are able to apply Lemma \ref{lem:3.6} 
provided that $P(s)=g_1(s)s$, $Q(s)= s^2+|s|^{2^*}$,
$\{v_n\}=\{u_n\}$, and $v=g_1(u)u$.
Then we deduce that
\begin{equation*}
\irn g_1(u_n)u_n \,dx \to \irn g_1(u)u \,dx \quad \hbox{as} \ n \to +\infty.
\end{equation*}
Moreover from \ef{eq:3.6},
one finds that $g_1(u) \in (\Hr)'$ and hence
\[
\irn g_1(u)(u_n-u) \,dx \to 0 \quad \hbox{as} \ n\to +\infty. \]
Thus we obtain
\begin{align} \label{eq:3.17}
& \irn g_1(u_n)(u_n-u) \,dx \\
&= \irn g_1(u_n)u_n \,dx - \irn g_1(u)u \,dx 
+\irn g_1(u)(u_n-u) \,dx 
\to 0 \quad \hbox{as} \ n\to +\infty. \notag
\end{align}
Next we put 
\[ 
h(s):=g_2(s)-ms \quad \hbox{ for } s\ge 0
\]
and extend it as an odd function for $s<0$.
By \ef{eq:3.5}, we can observe that $h(s)/s\ge 0$ for any $s \neq 0$. 
Thus from \ef{g2} and \ef{eq:3.3}, one has
\begin{equation*}
0 \le \liminf_{s\to 0}\frac{h(s)}{s}
\le \limsup_{s\to 0}\frac{h(s)}{s}
= \limsup_{s\to 0}\frac{g_2(s)-ms}{s}
= \limsup_{s\to 0}\frac{g_1(s)-g(s)-ms}{s}=0
\end{equation*}
and hence
\begin{equation} \label{eq:3.18}
\lim_{s \to 0} \frac{h(s)}{s}=0. 
\end{equation}
Furthermore from \ef{eq:3.2} and \ef{eq:3.4}, we also have
\begin{align} 
\lim_{s \to +\infty} \frac{h(s)}{|s|^{2^*-1}}
&=\lim_{s \to + \infty} \frac{g_1(s)-g(s)-ms}{|s|^{2^*-1}}=0, \label{eq:3.19}
\\
\lim_{s \to -\infty} \frac{h(s)}{|s|^{2^*-1}}
&=\lim_{s \to - \infty} \frac{-g_1(-s)-g(s)-ms}{|s|^{2^*-1}}=0. \label{eq:3.19-2}
\end{align}
By \ef{eq:3.18}, \eqref{eq:3.19} and \ef{eq:3.19-2}, 
we can apply once again Lemma \ref{lem:3.6} to conclude that
\[
\irn h(u_n)u_n \,dx \to \irn h(u)u \,dx \quad \hbox{as} \ n \to +\infty. \]
Repeating previous arguments, it holds that
\begin{equation} \label{eq:3.20}
\irn h(u_n)(u_n-u) \,dx \to 0 \quad \hbox{as} \ n \to +\infty.
\end{equation} 

Now by \ef{eq:3.11}, \ef{eq:3.17} and \ef{eq:3.20}, one finds that
\begin{align*}
&\frac{\de_u J(\theta_n, u_n)[u_n-u]}{e^{N\theta_n}} \\
&=\irn \phi \left(\frac{u_n^2+e^{-2\theta_n}|\n u_n|^2}{2}\right)
\Big\{ u_n(u_n -u)+e^{-2\theta_n}\n u_n \cdot \n (u_n - u) \Big\} \,dx \\
&\quad +m\irn u_n (u_n -u) \,dx +\irn h(u_n)(u_n -u) \,dx 
-\irn g_1(u_n)(u_n -u) \,dx \\
&=\irn \phi \left(\frac{u_n^2+e^{-2\theta_n}|\n u_n|^2}{2}\right)
\Big\{ u_n(u_n -u)+e^{-2\theta_n}\n u_n \cdot \n (u_n - u) \Big\} \,dx \\
&\quad +m\irn u_n (u_n -u) \,dx +o_n(1).
\end{align*}
Thus from \ef{eq:3.13} and since $\de_u J(\theta_n, u_n)\to 0$ in $(\Hr)'$, we deduce that
\begin{align*}
& e^{N\theta_n}\irn \phi \left(\frac{u_n^2+e^{-2\theta_n}|\n u_n|^2}{2}\right)
\Big\{ u_n(u_n -u)+e^{-2\theta_n}\n u_n \cdot \n (u_n - u) \Big\} \,dx \\
&\quad +me^{N\theta_n}\irn u_n (u_n -u) \,dx =o_n(1).
\end{align*}
Hence denoting $v_n=u_n(e^{-\theta_n}\cdot)$ and $z_n=u(e^{-\theta_n}\cdot)$, we obtain 
\begin{equation*} 
\Psi'_m(v_n)[v_n-z_n]\to 0 \quad \hbox{as} \ n \to +\infty.
\end{equation*}
Since $\theta_n \to 0$ as $n\to +\infty$, one has $z_n \to u$ strongly in $\H$ 
because $z_n\weakto u$ weakly in $H^1(\RN)$ and $\|z_n\|\to \|u\|$.
Thus by the boundedness of $\{u_n\}$ (and so also of $\{v_n\}$), we get 
\begin{equation} \label{eq:3.21}
\Psi'_m(v_n)[v_n-u]\to 0 \quad \hbox{as} \ n \to +\infty.
\end{equation}
Then from \ef{eq:3.14} and \ef{eq:3.21}, one can apply Lemma \ref{lem:2.2} to obtain
\begin{equation} \label{eq:3.22}
\n v_n(x)\to \n u (x) \quad \hbox{as} \ n \to +\infty \ \hbox{ a.e. in }\RN.
\end{equation}
Moreover arguing as in \cite[Lemma 3.2]{JR}
or \cite[Proposition 3.1]{Stu}, one has from \eqref{phi2} that
\begin{align*}
&\Psi_m(u)-\Psi_m(v_n)-\Psi_m'(v_n)[u-v_n] \\
&\ge \irn \phi \left( \frac{v_n^2+|\n v_n|^2}{2} \right) 
\Big\{ (u^2+|\n u|^2)^{\frac{1}{2}} (v_n^2+|\n v_n|^2)^{\frac{1}{2}}
-(uv_n + \n u \cdot \n v_n ) \Big\} \,dx \\
&\quad +\frac{m}{2} \irn (u-v_n)^2 \,dx 
\ge 0.
\end{align*}
Thus from \ef{eq:3.21}, we infer that
\begin{equation} \label{eq:3.23}
\limsup_{n \to +\infty} \Psi_m(v_n)\le \Psi_m(u).
\end{equation}
From \ef{eq:3.14}, \ef{eq:3.22} and \ef{eq:3.23}, 
we can apply Lemma \ref{lem:2.3} to conclude that $v_n \to u$ strongly in $\Hr$. 
Hence, using again the fact that $\theta_n \to 0$ as $n\to +\infty$, 
we also have $u_n \to u$ strongly in $\Hr$.
Using \ef{eq:3.11} again, one finds that $I'(u)=0$
and hence $u$ a solution of \eqref{eq:1} by Proposition \ref{prop:3.1}. 
Finally by \eqref{eq:3.12}, passing to the limit, we can see that
\[
\frac{1}{N} \irn \quas |\n u|^2 \,dx =c>0. \]
Then from \eqref{phi1}, it follows that $u$ is non-trivial.
\end{proof}

\section{Regularity and Positivity of solutions for \ef{eq:1}}\label{se4}

In this section, we show that any solution of \ef{eq:1} 
has $C^{1,\sigma}$-regularity and
any non-negative nontrivial solution of \ef{eq:1} is indeed positive everywhere
under \eqref{phi1} and the slightly stronger assumption \eqref{phi3}.

For this purpose, we first observe that 
\ef{eq:1} can be written by the form:
\beq \label{eq:4.1}
\Div {\bm A}(u, \nabla u) + B(u,\nabla u) =0,
\eeq
where for $(u,\bp) \in \R \times \R^N$,
\begin{align*}
{\bm A}(u, \bp) &= \quasi \bp, \quad
A_i(u, \bp) = \quasi p_i, \ i =1,\cdots, N, \\
B(u,\bp) &= - \quasi u +g(u). 
\end{align*}
Then from \eqref{phi1}, one finds that 
\begin{align} 
{\bm A} (u,\bp) \cdot \bp 
&= \quasi | \bp|^2 \ge \phi_0 | \bp|^2, \label{eq:4.2} \\
| \bm{A} (u,\bp) | 
&\le \quasi |\bp| \le \phi_1 | \bp|, \label{eq:4.3}
\end{align}
\begin{equation*}
-\phi_1 |u| +g(u) \le B(u,\bp) \le \phi_1 |u| + g(u).
\end{equation*}
Especially for $|u| \le M$ and $\bp \in \R^N$, 
it follows from \ef{g2} that
\begin{equation} \label{eq:4.4}
| B(u,\bp)| \le K |u| \quad
\hbox{for some} \ K>0 \ \hbox{depending on} \ M.
\end{equation}
Moreover direct calculations yield that
\begin{align*}
\frac{\partial A_i}{\partial p_j} (u,\bp) \xi_i \xi_j
&= \left\{ \phi' \left( \frac{u^2+|\bp|^2}{2} \right) p_ip_j 
+\quasi \delta_{ij} \right\} \xi_i \xi_j, \quad i,j=1,\cdots , N, \\
\frac{\partial A_i}{\partial u} (u,\bp) 
&= \phi' \left( \frac{u^2+|\bp|^2}{2} \right) u p_i.
\end{align*}
From \eqref{phi3}, for $|u| \le M$, one can see that
\begin{align} \label{eq:4.5}
\sum_{i,j=1}^N \frac{\partial A_i}{\partial p_j} \xi_i \xi_j
&= \phi' \left( \frac{u^2+|\bp|^2}{2} \right) ( \bp \cdot \bm{\xi})^2
+\quasi | \bm{\xi}|^2  \\
&\le \left\{ \left| \phi' \left( \frac{u^2+|\bp|^2}{2} \right) \right| |\bp|^2
+\phi \left( \frac{u^2+|\bp|^2}{2} \right) \right\} |\bm{\xi}|^2 \notag \\
&\le \left\{ 2 \left( \frac{u^2+|\bp|^2}{2} \right) 
\left| \phi' \left( \frac{u^2+|\bp|^2}{2} \right)  \right| +\phi_1 \right\} 
|\bm{\xi}|^2 \notag \\
&\le C |\bm{\xi}|^2, \notag
\end{align}
and, for some $K >0$ depending on $M$,
\begin{align} \label{eq:4.6}
&\sum_{i=1}^N \left( \left| \frac{\partial A_i}{\partial u} \right| +|A_i| \right)
(1+|\bp|) + |B|  \\
&\le N \left| \phi' \left( \frac{u^2+|\bp|^2}{2} \right) \right| 
|u| |\bp| (1+|\bp|) +N \quasi |\bp| (1+|\bp|) \notag \\
&\quad +\quasi |u| +|g(u)| \notag \\
&\le N \left( \frac{u^2+|\bp|^2}{2} \right) 
\left| \phi' \left( \frac{u^2+|\bp|^2}{2} \right) \right| (1+|\bp|)
+N\phi_1 (1+|\bp|)^2 +\phi_1 |u|+|g(u)| \notag \\
&\le K(1+|\bp|)^2 . \notag
\end{align}
Moreover if $(u,\bp) \in \R \times \R^N$ satisfies 
$\phi' \left( \frac{u^2+|\bp|^2}{2} \right) \le 0$, 
we have from \eqref{phi3} that 
\begin{align*} 
\sum_{i,j=1}^N \frac{\partial A_i}{\partial p_j} \xi_i \xi_j
&= \phi' \left( \frac{u^2+|\bp|^2}{2} \right) (\bp \cdot \bm{\xi})^2 
+\quasi |\bm{\xi}|^2 \notag \\
&\ge \phi' \left( \frac{u^2+|\bp|^2}{2} \right) |\bp|^2 |\bm{\xi}|^2
+\quasi |\bm{\xi}|^2 \notag \\
&\ge \left\{ 2 \left( \frac{u^2+|\bp|^2}{2} \right) \phi' \left( \frac{u^2+|\bp|^2}{2} \right)
+\quasi \right\} |\bm{\xi}|^2 \notag \\
&\ge \phi_0 |\bm{\xi}|^2.
\end{align*}
In the case $\phi' \left( \frac{u^2+|\bp|^2}{2} \right) \ge 0$, one easily finds that 
\[
\sum_{i,j=1}^N \frac{\partial A_i}{\partial p_j} \xi_i \xi_j
\ge \phi_0 |\bm{\xi}|^2, \]
yielding that 
\begin{equation} \label{eq:4.7}
\sum_{i,j=1}^N \frac{\partial A_i}{\partial p_j} \xi_i \xi_j
\ge \phi_0 |\bm{\xi}|^2
\quad \hbox{for all} \ (u,\bp, \bm{\xi}) \in \R \times \R^N \times \R^N.
\end{equation} 

Under these preparations, we establish the following regularity result.

\begin{proposition} \label{prop:4.1}
Assume \eqref{phi1}, \eqref{phi3} and \ef{g1}-\ef{g3}.
Then any weak solution $u \in H^1(\R^N)$ of \ef{eq:1} belongs to the class
$C^{1,\sigma}(\R^N)$ for some $\sigma \in (0,1)$.
\end{proposition}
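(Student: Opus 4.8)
The plan is to establish $C^{1,\sigma}$-regularity in two stages: first upgrade a weak solution $u \in H^1(\R^N)$ to $L^\infty_{\mathrm{loc}}$ (indeed to a bounded function on all of $\R^N$), then apply the standard regularity theory for quasilinear equations in divergence form to conclude $u \in C^{1,\sigma}_{\mathrm{loc}}$, and finally note that the decay $u(x) \to 0$ together with the local estimates gives $C^{1,\sigma}(\R^N)$. The structural inequalities \eqref{eq:4.2}, \eqref{eq:4.3}, \eqref{eq:4.5}, \eqref{eq:4.6}, \eqref{eq:4.7} derived just above have been arranged precisely so that the operator $\Div \bm{A}(u,\nabla u) + B(u,\nabla u)$ falls into the framework of \cite[Chapters 7--8]{LU} (Ladyzhenskaya--Uraltseva) once we know $u$ is bounded: \eqref{eq:4.2}--\eqref{eq:4.3} give the ellipticity and growth of $\bm A$, and \eqref{eq:4.4} gives the controlled growth of the lower-order term $B$ on the region $|u| \le M$.

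The boundedness step is where the real work is. First I would recall that if $\phi_0 \le \phi$ then a weak solution of \eqref{eq:1} satisfies, by testing against suitable truncations, a Caccioppoli-type inequality; combined with \eqref{g3} (which after the reduction to $\tilde g$ in Section \ref{se3} gives the subcritical-at-infinity growth \eqref{eq:3.2}, or directly $|g(s)| \le \phi_1|s| + C|s|^{2^*-1}$) one runs a Moser/Stampacchia iteration or a Brezis--Kato bootstrap to get $u \in L^q_{\mathrm{loc}}$ for all $q < \infty$, and then $u \in L^\infty_{\mathrm{loc}}(\R^N)$. Since $u(x) \to 0$ as $|x| \to \infty$, we also obtain $u \in L^\infty(\R^N)$, so there is a global $M = \|u\|_\infty$ with $|u| \le M$ everywhere; this is what allows us to use \eqref{eq:4.4}, \eqref{eq:4.5}, \eqref{eq:4.6} with the structural constants depending only on $M$ and on $\phi_0,\phi_1$ and the bound $s|\phi'(s)| \le C$ from \eqref{phi3}. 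Alternatively, since our reduced $g$ already satisfies $|g(s)| \le C(1+|s|)$ for $|s| \le M$ once $u$ is bounded, the lower-order term $B(u,\nabla u)$ in \eqref{eq:4.1} is bounded by $K|u|$ by \eqref{eq:4.4}, which simplifies the iteration; but one still needs the initial $L^\infty_{\mathrm{loc}}$ bound, and that requires the subcritical exponent control \eqref{eq:3.2}. This is the main obstacle: handling the critical Sobolev exponent $2^*$ at the boundary of \eqref{g3} by exploiting that after the truncation of Section \ref{se3} the nonlinearity is strictly subcritical.

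Once $u \in L^\infty(\R^N)$, I would fix any ball $B_{2R}(x_0)$, set $M = \|u\|_{L^\infty}$, and invoke the De Giorgi--Nash--Moser theory together with the interior gradient estimates for quasilinear equations: by \eqref{eq:4.2}, \eqref{eq:4.3}, \eqref{eq:4.4} the function $u$ is a bounded weak solution of an equation to which \cite[Theorem 8.1, Chapter 4]{LU} (local boundedness and Hölder continuity) applies, giving $u \in C^{\sigma_0}_{\mathrm{loc}}$ for some $\sigma_0 \in (0,1)$. Then, with $u$ now Hölder continuous and the coefficient matrix $\partial A_i/\partial p_j$ satisfying the uniform ellipticity \eqref{eq:4.7} and the upper bound \eqref{eq:4.5}, together with the structure condition \eqref{eq:4.6}, one applies the interior $C^{1,\sigma}$-estimates of Ladyzhenskaya--Uraltseva \cite[Chapter 4, \S 6]{LU} (or the DiBenedetto/Lieberman regularity results, e.g.\ \cite{DiB, Lie}) to conclude $u \in C^{1,\sigma}_{\mathrm{loc}}(B_R(x_0))$ for some $\sigma \in (0,1)$ depending only on $N$, $\phi_0$, $\phi_1$, $C$, $M$. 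Since these constants are uniform in $x_0$ and $u \to 0$ at infinity, a standard covering and translation argument yields $u \in C^{1,\sigma}(\R^N)$, completing the proof. I expect the write-up to consist mostly of verifying that the hypotheses of the cited regularity theorems are met — which is exactly what \eqref{eq:4.2}--\eqref{eq:4.7} were set up to do — with the only genuinely delicate point being the $L^\infty$ bootstrap at the critical exponent.
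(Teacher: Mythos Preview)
Your proposal is correct and follows essentially the same route as the paper: a Brezis--Kato type bootstrap to obtain $u\in L^q(\R^N)$ for all $q>2^*$, then Moser iteration for $u\in L^\infty_{\rm loc}$, and finally the Ladyzhenskaya--Ural'tseva / Tolksdorf regularity theory (using exactly \eqref{eq:4.2}--\eqref{eq:4.7}) to reach $C^{1,\sigma}$. One minor remark: the paper does not invoke the truncation $\tilde g$ of Section~\ref{se3} here---since $u$ and the test function $\varphi=u\min\{|u|^{2s},L^2\}$ have the same sign, the one-sided bound $g(s)\le -(m-\eps)s+C_\eps s^{(N+2)/(N-2)}$ coming directly from \eqref{g2}--\eqref{g3} suffices, so the ``critical-exponent obstacle'' you flag is handled by the standard Brezis--Kato device of splitting $\{|u|\ge K\}$ and $\{|u|<K\}$ rather than by appealing to strict subcriticality.
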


\begin{proof}
The proof consists of three steps.

\noindent
{\bf Step 1}: $u \in L^q(\R^N)$ for any $q > \frac{2N}{N-2}$.

This kind of property can be obtained by applying the Brezis-Kato lemma
as in \cite[P. 329]{BL}. 
However since our problem \ef{eq:1} is quasilinear, 
we give the proof for the sake of completeness.

For $L>0$ and $s \ge 0$, we define
\[
\varphi:= u \min \{ |u|^{2s} ,L^2 \} \in H^1(\R^N). \]
Then multiplying \ef{eq:1} by $\varphi$, one has
\begin{align} \label{eq:4.8}
&\irn \quas |\n u|^2 \min \{ |u|^{2s},L^2 \} \,dx \\
&+\frac{s}{2} \int_{ \{ x; |u(x)|^s \le L \}}
\quas \big| \n |u|^2 \big|^2 |u|^{2s-2} \,dx \notag \\
&+\irn \quas u \varphi \,dx = \irn g(u) \varphi \,dx. \notag
\end{align}
By \ef{g1}-\ef{g3}, for any $\eps \in (0,m+\phi_0)$, there exists $C_{\eps}>0$ such that
\[
g(s) \le -(m-\eps) s + C_{\eps} s^{\frac{N+2}{N-2}} \quad \hbox{for} \ s \ge 0. \]
Since $u$ and $\varphi$ have the same sign, it holds that
\[
g(u) \varphi \le -(m-\eps) u \varphi +C_{\eps} |u|^{\frac{4}{N-2}} u \varphi. \]
Then from \ef{eq:4.8} and \eqref{phi1}, we get 
\begin{equation} \label{eq:4.9}
\phi_0 \irn \big| \nabla \big( u \min \{ |u|^s,L\} \big) \big|^2 \,dx
\le C \irn |u|^{\frac{4}{N-2}} u \varphi \,dx.
\end{equation}
For any $K>0$, by the H\"older and the Sobolev inequalities, one has
\begin{align*}
\irn |u|^{\frac{4}{N-2}} u \varphi \,dx 
&= \int_{\{ x; |u(x)| \ge K \}} |u|^{\frac{4}{N-2}} u \varphi \,dx
+\int_{\{ x; |u(x)| \le K \}} |u|^{\frac{4}{N-2}} u \varphi \,dx \\
&\le C \left( \int_{\{ x; |u(x)| \ge K \}} |u|^{\frac{2N}{N-2}} \,dx \right)^{\frac{2}{N}}
\left( \irn \big| u \min \{ |u|^s,L \} \big|^{\frac{2N}{N-2}} \,dx \right)^{\frac{N-2}{N}} \\
&\quad + K^{\frac{4}{N-2}} \irn |u|^2 \min \{ |u|^{2s},L^2 \} \,dx \\
&\le C \left( \int_{\{ x; |u(x)| \ge K \}} |u|^{\frac{2N}{N-2}} \,dx \right)^{\frac{2}{N}} 
\irn \big| \nabla ( u \min \{ |u|^s,L\}) \big|^2 \,dx \\
&\quad +K^{\frac{4}{N-2}} \irn |u|^{2s+2} \,dx.
\end{align*}
Since $u \in L^{\frac{2N}{N-2}}(\RN)$, it follows that
\[
\int_{\{ x; |u(x)| \ge K \}} |u|^{\frac{2N}{N-2}} \,dx \to 0 \quad \hbox{as} \ K \to +\infty.\]
Thus from \ef{eq:4.9}, choosing sufficiently large $K$, we find that
\[
\irn \big| \nabla ( u \min \{ |u|^s,L \} ) \big|^2 \,dx
\le C \| u\|_{L^{2s+2}(\RN)}^{2s+2}. \]
Letting $L \to +\infty$, we conclude that
\begin{equation} \label{eq:4.10}
\big\| \nabla (|u|^{s+1}) \big\|_{L^2(\RN)} \le C \| u \|_{L^{2s+2}(\RN)}^{s+1}.
\end{equation}
Now putting $s_0=0$ and $s_i+1=(s_{i-1}+1)\frac{N}{N-2}$ for $i \ge 1$,
one deduces from \ef{eq:4.10} that
\begin{align*}
u \in L^2 \underset{s_0=0}{\Rightarrow} \nabla |u| \in L^2 &\Rightarrow u \in L^{\frac{2N}{N-2}} 
\underset{s_1=\frac{2}{N-2}}{\Rightarrow} \nabla |u|^{\frac{N}{N-2}} \in L^2 \\
&\Rightarrow u \in L^{2 \left( \frac{N}{N-2} \right)^2} 
\underset{s_2=\left( \frac{N}{N-2} \right)^2-1}{\Rightarrow} 
\nabla |u|^{\left( \frac{N}{N-2} \right)^2} \in L^2 \ \cdots
\end{align*}
and hence 
$u \in L^q(\RN)$ for any $q> \frac{2N}{N-2}$, as desired.

\noindent
{\bf Step 2}: $u \in L^{\infty}_{\rm loc}(\RN)$.

By \eqref{phi1} and \ef{g1}-\ef{g3}, one has
\begin{align*}
B(u,\bp) \,{\rm sign} \,u 
&= - \quasi u \,{\rm sign} \,u +g(u) \,{\rm sign} \,u \\
&\le -(\phi_0 +m-\eps)|u| +C_{\eps} |u|^{\frac{N+2}{N-2}} \\
&\le C_{\eps} |u|^{\frac{4}{N-2}} |u|. 
\end{align*}
Thus by Step 1, we obtain
\begin{equation} \label{eq:4.11}
B(u,\bp) \, {\rm sign} \, u \le a(x) |u|, \quad
a(x)=C_{\eps} |u|^{\frac{4}{N-2}} \in L^r(\RN) \ \hbox{for any} \ r>\frac{N}{2}.
\end{equation}
Then the claim follows from \ef{eq:4.2}, \ef{eq:4.11} and
by the Moser type iteration. 
(See \cite{DT} or \cite[Theorem 7.1, P. 286]{LU}.) 

\noindent
{\bf Step 3}: $u\in C^{1,\sigma}(\RN)$ for some $\sigma \in (0,1)$.

Once we get the $L^{\infty}$-boundedness, together with 
\ef{eq:4.4}, \ef{eq:4.5}, \ef{eq:4.6} and \ef{eq:4.7}, 
we are able to apply the regularity result 
for quasilinear elliptic problems of the divergence form \ef{eq:4.1}
due to \cite[Chapter 4, Theorems 3.1, 5.2 and 6.2]{LU}, \cite{Tol}
to conclude that $u\in C^{1,\sigma}(\RN)$ for some $\sigma \in (0,1)$.
\end{proof}

From \ef{eq:4.2}, \ef{eq:4.3} and \ef{eq:4.4}, we can also apply the
strong maximum principle \cite[Theorem 2.5.1]{PuS}
or the Harnack inequality \cite{Tru}.
Then we obtain the following positivity result.

\begin{proposition} \label{prop:4.2}
Assume \eqref{phi1}, \eqref{phi3} and \ef{g1}-\ef{g3}.
Then any non-negative non-trivial (regular) solution of \ef{eq:1}
is positive on $\RN$.
\end{proposition}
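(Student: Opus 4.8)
The plan is to establish positivity via the strong maximum principle for quasilinear operators of the form \eqref{eq:4.1}. Let $u \in C^{1,\sigma}(\RN)$ be a non-negative non-trivial solution of \eqref{eq:1}, whose regularity is guaranteed by Proposition \ref{prop:4.1}. First I would record that $u$ is a weak (indeed classical, by regularity) solution of $\Div \bm{A}(u,\n u) + B(u,\n u) = 0$, where, by \eqref{eq:4.2} and \eqref{eq:4.3}, the vector field $\bm{A}$ satisfies the ellipticity bound $\bm{A}(u,\bp)\cdot\bp \ge \phi_0|\bp|^2$ and the growth bound $|\bm{A}(u,\bp)| \le \phi_1|\bp|$, and, since $u$ is bounded (Step 2 of Proposition \ref{prop:4.1} together with Step 1, or simply $u \in C^{1,\sigma}(\RN) \cap L^{2^*}$), estimate \eqref{eq:4.4} gives $|B(u,\n u)| \le K|u|$ on $\RN$ for a suitable constant $K$ depending on $\|u\|_{L^\infty}$.

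With these structural conditions in hand, the operator fits into the framework of the strong maximum principle of Pucci--Serrin; specifically one writes the equation as $\Div \bm{A}(u,\n u) \le K u$ (using $B(u,\n u) \ge -K u$ for $u \ge 0$), so that $u \ge 0$ is a supersolution of a suitable quasilinear inequality satisfying the hypotheses of \cite[Theorem 2.5.1]{PuS}. Since $u$ is non-trivial, it is positive somewhere, and the strong maximum principle then forces $u > 0$ on all of $\RN$: if $u$ vanished at some point, it would vanish identically on any connected component, contradicting $u \not\equiv 0$ together with the fact that $\RN$ is connected. Alternatively, one invokes the Harnack inequality of Trudinger \cite{Tru} for non-negative solutions of such quasilinear equations: Harnack gives $\sup_{B_R(x_0)} u \le C \inf_{B_R(x_0)} u$ for every ball, so $u$ cannot vanish at an interior point without vanishing on a neighborhood, and a connectedness/covering argument propagates positivity to all of $\RN$.

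The main obstacle is simply verifying that the structural conditions required by the cited strong maximum principle (or Harnack inequality) are met in the precise normalization used there — i.e., matching \eqref{eq:4.2}, \eqref{eq:4.3}, \eqref{eq:4.4} to the hypotheses of \cite[Theorem 2.5.1]{PuS}, which typically demand a monotonicity or comparison structure in the lower-order term of the form $B(u,\bp) \ge -f(u)$ with $f$ locally Lipschitz (or at least satisfying an Osgood-type condition) and $f(0)=0$. Here $|B(u,\n u)| \le K|u|$ suffices since the linear bound is well within the admissible class. There are no genuinely hard estimates: the a priori bounds have already been done in Section \ref{se4}, and the argument is a direct citation once the hypotheses are checked. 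I would therefore keep the proof short, citing \eqref{eq:4.2}, \eqref{eq:4.3}, \eqref{eq:4.4} and then \cite[Theorem 2.5.1]{PuS} (or \cite{Tru}) to conclude $u > 0$ on $\RN$.
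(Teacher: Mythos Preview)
Your proposal is correct and follows essentially the same route as the paper: the authors simply observe that \eqref{eq:4.2}, \eqref{eq:4.3} and \eqref{eq:4.4} place the operator within the scope of the strong maximum principle \cite[Theorem 2.5.1]{PuS} (or the Harnack inequality \cite{Tru}), and conclude positivity directly. Your write-up is more detailed than theirs, but the content and the cited tools coincide.
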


\section{Existence of a positive solution and a radial ground state solution}\label{se5}

In this section, we prove the existence of a positive solution
and a radial ground state solution of \ef{eq:1}.

\begin{proof}[Proof of Theorem \ref{main2}]
Under \eqref{phi1} and \eqref{phi3}, we know that
any non-negative non-trivial of \ef{eq:1} is of the class $C^{1,\sigma}$ and
positive on $\RN$ by Propositions \ref{prop:4.1} and \ref{prop:4.2}. 
Thus the claim follows from Theorem \ref{main1}.
\end{proof}

In the next lemma, we show that each solution of \eqref{eq:1} 
satisfies a Pohozaev type identity.

\begin{lemma} \label{lem:5.1}
Assume that \eqref{phi1}, \eqref{phi3} and \ef{g1}-\ef{g3}.
Then if $u\in \H$ is a solution of \eqref{eq:1}, 
it satisfies the following type Pohozaev identity:
\begin{equation} \label{eq:5.1}
\irn \Quas \,dx
-\frac 1N \irn \quas |\n u|^2 \,dx 
-\irn G(u) \,dx =0.
\end{equation}
\end{lemma}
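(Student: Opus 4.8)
The plan is to derive the Pohozaev identity \eqref{eq:5.1} by testing equation \eqref{eq:1} against the generator of dilations, $x \cdot \nabla u$. Since Proposition \ref{prop:4.1} gives $u \in C^{1,\sigma}(\RN)$ and the structure conditions \eqref{eq:4.2}--\eqref{eq:4.7} provide the needed growth control, one can justify the formal computation rigorously. Concretely, I would introduce a cutoff $\chi_R(x) = \chi(x/R)$ with $\chi \equiv 1$ on $B_1$, $\chi \equiv 0$ outside $B_2$, test the weak formulation with $\varphi_R = \chi_R \, (x\cdot \nabla u)$, and then let $R \to +\infty$, checking that the error terms produced by $\nabla \chi_R$ vanish. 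The decay estimate \eqref{eq:3.16} together with standard elliptic estimates on $\nabla u$ ensures the tails are controlled, so the cutoff errors are $o_R(1)$.

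The core of the argument is the two integration-by-parts identities. First, for the principal part, write $\quas \nabla u \cdot \nabla(x \cdot \nabla u)$ and use that $\partial_i(x \cdot \nabla u) = \partial_i u + x \cdot \nabla(\partial_i u)$. Noting that $\nabla u \cdot \nabla(\partial_i u) = \tfrac12 \partial_i(|\nabla u|^2)$ and similarly handling the mass-type term $\quas\, u\,(x\cdot\nabla u)$ via $u (x\cdot \nabla u) = \tfrac12 x \cdot \nabla(u^2)$, one recognizes that the combination $\tfrac{u^2+|\nabla u|^2}{2}$ appears as the argument of $\phi$, and $\phi \cdot \nabla(\tfrac{u^2+|\nabla u|^2}{2}) = \nabla\big(\Phi(\tfrac{u^2+|\nabla u|^2}{2})\big)$. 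Integrating by parts the term $x \cdot \nabla\big(\Phi(\tfrac{u^2+|\nabla u|^2}{2})\big)$ produces $-N \irn \Quas\,dx$, while the leftover term $\quas |\nabla u|^2$ (coming from the $\partial_i u$ piece that does not get absorbed into the gradient) contributes $\irn \quas|\nabla u|^2\,dx$ with a sign and factor that, after dividing through appropriately, yields the $-\tfrac1N$ coefficient. Second, for the right-hand side, $g(u)(x\cdot\nabla u) = x \cdot \nabla(G(u))$, which integrates by parts to $-N \irn G(u)\,dx$. Collecting these gives \eqref{eq:5.1}.

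The main obstacle I expect is not the algebra but the rigorous justification of the integrations by parts and the vanishing of the cutoff error terms, since $u$ is only $C^{1,\sigma}$ and a priori $\nabla u$ need not be differentiable in the classical sense — so one cannot naively write $\nabla(\partial_i u)$. The clean way around this is the standard device (as in the proof of the Pohozaev identity for quasilinear equations, e.g. in \cite{DCSG} or via the Pucci--Serrin variational identity) of working with the divergence-structure form \eqref{eq:4.1}: one uses the identity $\dv\big[(x\cdot\bm{A}(u,\nabla u))\,\text{(something)}\big]$ at the level of distributions, or equivalently one first mollifies, applies the smooth computation, and passes to the limit using the $C^{1,\sigma}$ regularity and dominated convergence on $B_{2R}$. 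Alternatively, since the problem has a variational structure with $\bm A = \nabla_{\bm p} F$ and $B = -\partial_u F + g(u)$ for $F(u,\bm p) = \Phi(\tfrac{u^2+|\bm p|^2}{2})$, I would invoke the Pucci--Serrin identity directly, which is designed exactly for $C^1$ solutions of such systems and immediately yields \eqref{eq:5.1} after inserting the specific $F$ and $g$. I would then only need to verify the decay hypotheses, which follow from \eqref{eq:3.16} and the $L^\infty$ bound of Step 2 in Proposition \ref{prop:4.1}.
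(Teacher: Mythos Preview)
Your approach is essentially the paper's: use the $C^{1,\sigma}$ regularity from Proposition~\ref{prop:4.1}, apply a Pucci--Serrin-type identity (the paper invokes \cite{DMS}, with the strict convexity of $(s,\bp)\mapsto\Phi(\tfrac{s^2+|\bp|^2}{2})$ as the structural hypothesis) against cut-off vector fields $h_k(x)=H(x/k)x$, and let $k\to+\infty$.

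One correction: do not appeal to \eqref{eq:3.16} for the tail control. That estimate is the radial lemma and is only available for radial functions, whereas Lemma~\ref{lem:5.1} is stated for arbitrary $u\in H^1(\RN)$. The right justification for passing to the limit (and the one the paper uses) is simply that $\Quas$, $\quas|\nabla u|^2$, and $G(u)$ all lie in $L^1(\RN)$; this follows from \eqref{phi1}, $u\in H^1(\RN)$, and the growth bounds on $g$. Dominated convergence then disposes of the cutoff errors without any pointwise decay information.
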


\begin{proof}
We argue as in \cite{PS}. 
By Proposition \ref{prop:4.1}, 
we know that $u \in C^{1,\sigma}(\RN)$ for some $\sigma \in (0,1)$.
Then, since the function 
$$
{\mathcal L}(s,\bp)=\Phi \left( \frac{s^2+|\bp|^2}{2} \right) $$ 
associated with the differential operator in \eqref{eq:1} is strictly convex for all $s \in \R$, 
we can apply the Pohozaev identity due to \cite{DMS}
by choosing $h(x)=h_k(x)=H(x/k)x \in C_0^1(B_{2k}(0),\R^N)$ for $k \in \N$,
where $H\in C_0^1(\R^N)$ is such that $H(x)=1$ on $|x| \le 1$
and $H(x)=0$ for $|x| \ge 2$.
Letting $k \to +\infty$ and taking into account that 
\[
\Phi \left(\frac{u^2+|\n u|^2}{2}\right)\!,\
\phi \left(\frac{u^2+|\n u|^2}{2}\right)|\n u|^2 \hbox{ and } G(u) \in L^1(\RN),
\]
we obtain \eqref{eq:5.1} as claimed. 
\end{proof}

Next we show the existence of a radial ground state solution of \ef{eq:1}.
Let us define by $\CS$ the set of the nontrivial radial solutions of \eqref{eq:1}, namely
\[
\CS:=\{u\in \Hr\setminus \{0\} \ ; \ I'(u)=0\}.
\]
By Theorem \ref{main1}, we know that $\CS\ne \emptyset$. 

\begin{lemma} \label{lem:5.2}
Assume that \eqref{phi1}, \eqref{phi3} and \ef{g1}-\ef{g4}.
Then it holds that
\begin{equation*} 
\inf_{u\in \CS}\|u\|>0.
\end{equation*}
\end{lemma}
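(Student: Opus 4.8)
The plan is to argue by contradiction: suppose there exists a sequence $\{u_n\} \subset \CS$ with $\|u_n\| \to 0$. Since each $u_n$ is a critical point of $I$, testing $I'(u_n)[u_n]=0$ gives
\[
\irn \quasn (u_n^2 + |\n u_n|^2) \,dx = \irn g(u_n) u_n \,dx.
\]
Using the decomposition $g = g_1 - g_2$ with $g_2(s) = g_1(s) - g(s) \ge ms$, together with \eqref{phi1}, one obtains a lower bound of the form $\phi_0 \| \n u_n\|_{L^2}^2 + (\phi_0 + m)\|u_n\|_{L^2}^2 \le \irn g_1(u_n) u_n\,dx$, which after adding a harmless term and using $\phi_0 + m > 0$ reads $c_0 \|u_n\|^2 \le \irn g_1(u_n)u_n\,dx$ for some $c_0 > 0$.

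Next I would estimate the right-hand side. By \eqref{eq:3.6}, for any $\eps > 0$ there is $C_\eps > 0$ with $g_1(s)s \le \eps s^2 + C_\eps |s|^{2^*}$, hence
\[
\irn g_1(u_n) u_n \,dx \le \eps \|u_n\|_{L^2}^2 + C_\eps \|u_n\|_{L^{2^*}}^{2^*} \le \eps \|u_n\|^2 + C_\eps' \|u_n\|^{2^*},
\]
where in the last step I use the Sobolev embedding $\Hr \hookrightarrow L^{2^*}(\RN)$. Combining the two inequalities and choosing $\eps < c_0$ yields $(c_0 - \eps)\|u_n\|^2 \le C_\eps' \|u_n\|^{2^*}$, i.e. $\|u_n\|^{2^*-2} \ge (c_0-\eps)/C_\eps'$. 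Since $2^* - 2 > 0$, this contradicts $\|u_n\| \to 0$, proving $\inf_{u \in \CS}\|u\| > 0$.

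I do not expect any serious obstacle here; the argument is the standard ``critical points are bounded away from zero'' estimate, and all the ingredients (the decomposition of $g$, the growth bound \eqref{eq:3.6}, the coercivity coming from $\phi_0$ and $\phi_0 + m > 0$, and the Sobolev inequality) are already available in the excerpt. The only point requiring a little care is the sign bookkeeping in passing from $I'(u_n)[u_n]=0$ to the coercive lower bound: one must use $g_2(u_n)u_n \ge m u_n^2$ from \eqref{eq:3.5} on the set where $u_n \ge 0$ and the oddness of $g_2$ elsewhere, exactly as in the proof of Lemma \ref{lem:3.2}(i). Since, by Proposition \ref{prop:3.1}, solutions automatically satisfy $0 \le u \le s_0$, one may even assume $u_n \ge 0$, which makes this step transparent.
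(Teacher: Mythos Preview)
Your argument is correct and is essentially the same as the paper's: both test $I'(u)[u]=0$, use \eqref{phi1} together with $g_2(s)s \ge m s^2$ from \eqref{eq:3.5} to obtain $\phi_0 \|\nabla u\|_{L^2}^2 + (\phi_0+m)\|u\|_{L^2}^2 \le \int g_1(u)u\,dx$, and then invoke \eqref{eq:3.6} and the Sobolev inequality to conclude a uniform lower bound on $\|u\|$. The only cosmetic difference is that the paper derives the bound directly for an arbitrary $u\in\CS$ rather than phrasing it as a contradiction with a sequence $\|u_n\|\to 0$.
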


\begin{proof}
If $u\in \CS$, since $I'(u)[u]=0$, we have
\[
\irn \phi \left(\frac{u^2+|\n u|^2}{2}\right)\left(u^2+|\n u|^2\right) \,dx
+\irn g_2(u)u \,dx -\irn g_1(u)u \,dx =0.
\]
Therefore, by \eqref{eq:3.5}, \ef{eq:3.6} and \eqref{phi1}, we have
\begin{align*}
\phi_0 \|\n u\|^2_{L^2} +(\phi_0 +m)\|u\|^2_{L^2} 
&\le \irn \phi \left(\frac{u^2+|\n u|^2}{2}\right)\left(u^2+|\n u|^2\right) \,dx
+\irn g_2(u)u \,dx \\
&= \irn g_1(u)u \,dx \\
&\le \e \|u\|_{L^2}^2 + C_\e\|u\|_{L^{2^*}}^{2^*}.
\end{align*}
by which, taking $\e>0$ sufficiently so such that 
$\phi_0+m-\e>0$, the conclusion follows immediately.
\end{proof}

\begin{lemma} \label{lem:5.3}
Assume that \eqref{phi1}, \eqref{phi3} and \ef{g1}-\ef{g3}.
Then it follows that
\begin{equation*}
m_{0,\rm rad}:=\inf_{u\in \CS}I(u)>0.
\end{equation*}
\end{lemma}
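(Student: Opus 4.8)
The plan is to show that $I$ is uniformly bounded below by a positive constant on the set $\CS$ of nontrivial radial solutions. The key ingredients are the Pohozaev identity of Lemma \ref{lem:5.1}, the lower bound $\inf_{u \in \CS}\|u\| > 0$ from Lemma \ref{lem:5.2}, and the structural inequality \eqref{phi1} together with the definition of $I$.

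First I would take $u \in \CS$. Since $u$ is a critical point of $I$, it is a weak solution of \eqref{eq:1}, so by Lemma \ref{lem:5.1} it satisfies the Pohozaev identity \eqref{eq:5.1}, which I rewrite as
\[
\irn G(u)\,dx = \irn \Quas\,dx - \frac{1}{N}\irn \quas |\n u|^2\,dx.
\]
Substituting this into the expression $I(u) = \irn \Quas\,dx - \irn G(u)\,dx$, the terms $\irn \Quas\,dx$ cancel and one is left with
\[
I(u) = \frac{1}{N}\irn \quas |\n u|^2\,dx.
\]
(This is precisely the relation that appears at the end of the proof of Theorem \ref{main1}, where the mountain pass level $c$ equals $\frac1N \irn \quas|\n u|^2\,dx$.) By \eqref{phi1}, $\phi(s) \ge \phi_0 > 0$, hence
\[
I(u) \ge \frac{\phi_0}{N}\irn |\n u|^2\,dx = \frac{\phi_0}{N}\|\n u\|_{L^2}^2.
\]

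The final step is to bound $\|\n u\|_{L^2}^2$ away from zero uniformly over $\CS$. From $I'(u)[u] = 0$ and the argument in the proof of Lemma \ref{lem:5.2}, combining \eqref{eq:3.5}, \eqref{eq:3.6} and \eqref{phi1}, one gets
\[
\phi_0 \|\n u\|_{L^2}^2 + (\phi_0 + m)\|u\|_{L^2}^2 \le \irn g_1(u)u\,dx \le \eps\|u\|_{L^2}^2 + C_\eps\|u\|_{L^{2^*}}^{2^*},
\]
and after absorbing the $\eps\|u\|_{L^2}^2$ term (choosing $\eps < \phi_0 + m$) and using the Sobolev embedding $\|u\|_{L^{2^*}}^{2^*} \le C\|\n u\|_{L^2}^{2^*}$, one deduces $\phi_0\|\n u\|_{L^2}^2 \le C\|\n u\|_{L^2}^{2^*}$, so either $u \equiv 0$ (excluded) or $\|\n u\|_{L^2}^2 \ge \delta_0$ for some $\delta_0 > 0$ independent of $u$. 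Alternatively, one can simply invoke Lemma \ref{lem:5.2} together with the Pohozaev identity: since $\inf_{\CS}\|u\| > 0$ and the $L^2$-norm of $u$ is controlled in terms of $\|\n u\|_{L^2}$ along $\CS$ (again by $I'(u)[u]=0$ as above), it follows that $\inf_{\CS}\|\n u\|_{L^2} > 0$ as well. Either way, $I(u) \ge \frac{\phi_0}{N}\delta_0 > 0$ for all $u \in \CS$, which gives $m_{0,\rm rad} > 0$.

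I expect the only mildly delicate point to be making the passage from $\inf_{\CS}\|u\| > 0$ to $\inf_{\CS}\|\n u\|_{L^2} > 0$ fully rigorous — one must rule out the possibility that the norm is bounded below only through its $L^2$-part while $\|\n u\|_{L^2} \to 0$. This is handled exactly by the Nehari-type estimate already established in the proof of Lemma \ref{lem:5.2}: the inequality $\phi_0\|\n u\|_{L^2}^2 \le C\|\n u\|_{L^2}^{2^*}$ forces a gradient lower bound directly, so no separate argument is really needed. Everything else is a short computation chaining the Pohozaev identity, the sign condition \eqref{phi1}, and the Sobolev inequality.
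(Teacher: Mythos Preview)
Your proof is correct and rests on the same key identity as the paper's: combining the Pohozaev relation \eqref{eq:5.1} with the definition of $I$ to obtain $I(u)=\frac{1}{N}\irn \quas |\n u|^2\,dx \ge \frac{\phi_0}{N}\|\n u\|_{L^2}^2$ for every $u\in\CS$. The difference is only in packaging. The paper argues by contradiction: assuming $m_{0,\rm rad}=0$, it takes a minimizing sequence $\{u_n\}\subset\CS$, deduces $\|\n u_n\|_{L^2}\to 0$ from the identity above, and then uses the Pohozaev identity a \emph{second} time (together with \ef{eq:3.7}, \ef{eq:3.8} and Sobolev) to force $\|u_n\|_{L^2}\to 0$ as well, contradicting Lemma~\ref{lem:5.2}. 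You instead give a direct argument: from the Nehari identity $I'(u)[u]=0$ and the estimates of Lemma~\ref{lem:5.2} you extract the sharper inequality $\phi_0\|\n u\|_{L^2}^2 \le C\|\n u\|_{L^2}^{2^*}$, which immediately yields a uniform lower bound on $\|\n u\|_{L^2}$ over $\CS$ without passing through a contradiction or a second use of Pohozaev. Both routes are short; yours has the minor advantage of producing an explicit positive lower bound for $m_{0,\rm rad}$.
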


\begin{proof}
Suppose by contradiction that $m_{0,\rm rad}=0$. 
Then there exists $\{u_n\} \subset \CS$ such that $I(u_n)\to 0$ as $n \to +\infty$. 
Since $u_n$ is a solution of \ef{eq:1}, 
it follows by Lemma \ref{lem:5.1} that
$u_n$ satisfies the following Pohozaev type identity: 
\begin{align*}
&\irn \Phi \left(\frac{u_n^2+|\n u_n|^2}{2}\right) \,dx
-\frac 1N\irn \phi \left(\frac{u_n^2+|\n u_n|^2}{2}\right)|\n u_n|^2 \,dx \\
&\quad +\irn G_2(u_n) \,dx - \irn G_1(u_n) \,dx =0
\end{align*}
and hence
\[
I(u_n)
=\frac 1N\irn \phi \left(\frac{u_n^2+|\n u_n|^2}{2}\right)|\n u_n|^2 \,dx \to 0 
\quad \hbox{as} \ n \to +\infty.
\]
By \eqref{phi1}, this implies that
\begin{equation} \label{eq:5.2}
\|\n u_n\|_{L^2} \to 0 \quad \hbox{as} \ n \to +\infty.
\end{equation} 
Moreover using \eqref{eq:3.7}, \ef{eq:3.8} \eqref{eq:5.2}, \eqref{phi1} 
and the embedding of $\D$ into $L^{2^*}(\RN)$, we have
\begin{align*}
\frac{ \phi_0+m}{2}\|u_n\|_{L^2}^2
&\le \irn \Phi \left(\frac{u_n^2+|\n u_n|^2}{2}\right) \,dx +\irn G_2(u_n) \,dx \\
&=\frac 1N \irn \phi \left(\frac{u_n^2+|\n u_n|^2}{2}\right)|\n u_n|^2 \,dx
+ \irn G_1(u_n) \,dx \\
& \le o_n(1) +\frac \e 2\|u_n\|^2_{L^2} +\frac{C_\e}{2^*}\|u_n\|_{L^{2^*}}^{2^*} \\
& \le o_n(1) +\frac \e 2\|u_n\|^2_{L^2}.
\end{align*}
Choosing sufficiently small $\eps >0$, together with \eqref{eq:4.2}, 
we find that $\|u_n\|\to 0$ reaching a contradiction with Lemma \ref{lem:5.2}.
\end{proof}

By Lemma \ref{lem:5.3}, we are ready to prove the existence of a radial 
ground state solution of \ef{eq:1}.

\begin{proof}[Proof of Theorem \ref{main3}] 
Let $\{u_n\} \subset \CS$ be a minimizing sequence
such that $I'(u_n)=0$ and $I(u_n) \to m_{0,\rm rad}$ as $n \to +\infty$.
Repeating the arguments of the proof of Theorem \ref{main1}, 
we can prove that $\{u_n\}$ is bounded in $\H$. 
This implies the existence of $\bar u\in \Hr$ such that $u_n \weakto \bar u$ in $\H$.
Arguing as in the proof of Theorem \ref{main1}, 
we deduce that $u_n \to \bar u$ strongly in $\H$, 
and therefore $\bar u$ satisfies
\[
I(\bar u)= m_{0,\rm rad}= \min_{u\in \CS}I(u),
\]
namely, $\bar u$ is a radial ground state solution of \eqref{eq:1}. 
The regularity and the positivity of $\bar u$ 
follow by Propositions \ref{prop:4.1} and \ref{prop:4.2}.
\end{proof}

Finally by applying the Pohozaev identity \ef{eq:5.1}, 
we establish the following non-existence result,
which indicates \ef{g3} and \ef{g4} are {\it almost} optimal.

\begin{theorem} \label{thm:5.4}
Assume \eqref{phi1} and \eqref{phi3}
and $\Phi(t^2) \ge t^2 \phi(t^2)$ on $\R$. 
Then \ef{eq:1} has no non-trivial regular solution
if one of the following conditions holds:
\begin{enumerate}
\item[\rm(i)] $g(s)=-ms+|s|^{p-1}s$, with $m\in (-\phi_0,+\infty)$ and $p \ge \frac{N+2}{N-2}$;

\item[\rm(ii)] $G(s) \le \frac{\phi_0}{2} s^2$, for all $s\in \R$.
\end{enumerate}
\end{theorem}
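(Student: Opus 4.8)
The plan is to derive a contradiction from the Pohozaev identity \eqref{eq:5.1} combined with the equation tested against $u$. Suppose $u$ is a non-trivial regular solution. Testing \eqref{eq:1} with $u$ (which is legitimate since $u\in H^1(\RN)$ and, by Proposition \ref{prop:4.1}, $u\in C^{1,\sigma}(\RN)$) gives
\[
\irn \quas\big(u^2+|\n u|^2\big)\,dx = \irn g(u)u\,dx,
\]
while Lemma \ref{lem:5.1} yields
\[
\irn \Quas\,dx-\frac1N\irn\quas|\n u|^2\,dx=\irn G(u)\,dx.
\]
First I would eliminate the term $\irn\quas|\n u|^2\,dx$ between these two relations, obtaining an identity of the form
\[
\frac N2\irn\Big\{\frac{2}{N}\,\quas\,u^2+\Quas\Big\}\,dx
-\frac12\irn g(u)u\,dx-\frac N2\irn G(u)\,dx
\]
equal to an expression that, after regrouping, isolates the mass/nonlinearity contribution against the quasilinear contribution. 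Concretely, multiply the Pohozaev identity by $N$ and add $\tfrac{N-2}{2}$ times the tested-equation identity; the gradient terms combine so that one is left with
\[
\frac{N-2}{2}\irn g(u)u\,dx - N\irn G(u)\,dx
= \frac{N-2}{2}\irn \quas\,u^2\,dx - N\irn\Quas\,dx + \text{(a nonnegative remainder)},
\]
where the assumption $\Phi(t^2)\ge t^2\phi(t^2)$ (equivalently \eqref{stima} with $t^2=\frac{u^2+|\n u|^2}{2}$ applied appropriately, or rather the stated pointwise inequality) together with \eqref{phi1} controls the right-hand side from above by $-\tfrac{N-2}{2}\irn\quas\,u^2\,dx$ plus lower-order, hence is $\le 0$; being careful here is exactly the point where the hypothesis $\Phi(t^2)\ge t^2\phi(t^2)$ enters.

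For case (i), with $g(s)=-ms+|s|^{p-1}s$ one has $G(s)=-\tfrac m2 s^2+\tfrac{1}{p+1}|s|^{p+1}$, so $\tfrac{N-2}{2}g(s)s-NG(s)=\big(\tfrac{N-2}{2}-\tfrac{N}{p+1}\big)|s|^{p+1}$, and the coefficient is $\ge 0$ precisely when $p\ge\frac{N+2}{N-2}$. The mass terms $-m$ cancel in the same combination (both sides carry $\tfrac{N-2}{2}(-m)\irn u^2$ after accounting for the $\quas\,u^2$ grouping, since $\quas\ge\phi_0$ and the mass constant is absorbed consistently), so the identity forces
\[
\Big(\frac{N-2}{2}-\frac{N}{p+1}\Big)\irn|u|^{p+1}\,dx \le 0,
\]
which combined with $p\ge\frac{N+2}{N-2}$ gives $\irn|u|^{p+1}\,dx=0$ when the coefficient is strictly positive, and when $p=\frac{N+2}{N-2}$ one uses instead that the remainder term is strictly positive unless $\phi$ is constant and $\n u$ vanishes, again forcing $u\equiv0$. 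For case (ii), the hypothesis $G(s)\le\tfrac{\phi_0}{2}s^2$ makes $-N\irn G(u)\,dx\ge -\tfrac{N\phi_0}{2}\irn u^2\,dx$, and after the combination one compares against $\irn\Quas\,dx\ge\tfrac{\phi_0}{2}\irn(u^2+|\n u|^2)\,dx$; the inequality $\Phi(t^2)\ge t^2\phi(t^2)$ guarantees the sign works out so that all terms have a definite sign and the only possibility is $u\equiv 0$.

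The main obstacle I anticipate is the bookkeeping in the linear combination of the two identities: one must verify that the gradient terms $\irn\quas|\n u|^2\,dx$ truly cancel (they do, since Pohozaev carries the coefficient $-\tfrac1N$ and the tested equation carries $+1$, so multiplying the former by $N$ matches), and more delicately that the ``mass-like'' terms $\irn\quas\,u^2\,dx$ together with $\irn\Quas\,dx$ assemble, via the pointwise inequality $\Phi(t^2)\ge t^2\phi(t^2)$, into a quantity of definite sign rather than an indefinite one. A clean way to organize this is to write the combined identity as
\[
\irn\Big\{\frac{N-2}{2}g(u)u-NG(u)\Big\}\,dx
=\irn\Big\{N\,\Quas-\frac{N-2}{2}\,\quas\,u^2 - \quas|\n u|^2\Big\}\,dx
\]
and then bound the right-hand integrand using $\Quas\le$ (something) only where needed; this avoids a case split until the very end, where (i) and (ii) each dispatch the left-hand side by its explicit form or its sign hypothesis. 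Once both sides are shown to have opposite (or the same, depending on orientation) strict signs unless $u\equiv0$, the non-existence conclusion is immediate.
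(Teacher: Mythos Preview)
Your overall strategy---combine the Pohozaev identity with the equation tested against $u$---is exactly what the paper does. However, there are a computational error and a genuine gap.

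\medskip
\textbf{Arithmetic.} With the two relations
\[
\irn \quas (u^2+|\n u|^2)\,dx=\irn g(u)u\,dx,
\qquad
\irn \Quas\,dx-\frac1N\irn\quas|\n u|^2\,dx=\irn G(u)\,dx,
\]
the combination $\frac{N-2}{2}\times$(first) $-\,N\times$(second) actually gives
\[
\frac{N-2}{2}\irn g(u)u\,dx-N\irn G(u)\,dx
=-N\irn\Quas\,dx+\frac{N-2}{2}\irn\quas u^2\,dx+\frac{N}{2}\irn\quas|\n u|^2\,dx,
\]
which has the opposite sign to your displayed identity and a coefficient $\frac{N}{2}$ (not $1$) on the gradient term. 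In particular the gradient terms do \emph{not} cancel. Also, in case (i) the mass contributions do not cancel: $\frac{N-2}{2}g(s)s-NG(s)=ms^2+\big(\frac{N-2}{2}-\frac{N}{p+1}\big)|s|^{p+1}$, and the extra $m\irn u^2$ must be paired with the bound $-\phi_0\irn u^2$ coming from the right-hand side (via $\Phi(t^2)\ge t^2\phi(t^2)$ and $\phi\ge\phi_0$) to produce $(m+\phi_0)\irn u^2>0$. With these corrections, your argument for (i) is equivalent to the paper's identity \eqref{eq:5.3}.

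\medskip
\textbf{The real gap: case (ii).} Your plan uses the combined identity for both cases and ``dispatches the left-hand side'' at the end. But hypothesis (ii) bounds only $G(s)$, not $g(s)s$; the term $\frac{N-2}{2}\irn g(u)u\,dx$ has no sign control whatsoever, so the combined identity is useless here. The paper handles (ii) by using the Pohozaev identity \emph{alone}: subtracting $\frac{\phi_0}{2}\irn u^2$ from both sides of \eqref{eq:5.1} yields
\[
\irn\Big\{\Quas-\frac{1}{N}\quas|\n u|^2-\frac{\phi_0}{2}u^2\Big\}\,dx
=\irn\Big(G(u)-\frac{\phi_0}{2}u^2\Big)\,dx\le 0,
\]
whereas the pointwise inequality $\Phi(t^2)\ge t^2\phi(t^2)$ together with $\phi\ge\phi_0$ makes the left-hand integrand $\ge\frac{(N-2)\phi_0}{2N}|\n u|^2$, forcing $u\equiv 0$. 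You should treat the two cases separately, reserving the tested equation for (i) only.
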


\begin{proof}
(i). \ Let $u$ be a solution of \eqref{eq:1}. From $I'(u)[u]=0$, it holds that
\[
\irn \left\{ \quas (u^2+|\n u|^2) +mu^2 \right\} \,dx = \irn |u|^{p+1} \,dx. \]
Combining this equation with \ef{eq:5.1}, one finds that
\begin{align} \label{eq:5.3}
&\irn \left\{ \Quas - \left( \frac{u^2+|\n u|^2}{2} \right) \quas
+\frac{1}{N} \quas u^2 + \frac{m}{N} u^2 \right\} \,dx \\
&=\left( \frac{1}{p+1}-\frac{N-2}{2N} \right) \irn |u|^{p+1} \,dx. \notag
\end{align}
Since $p \ge \frac{N+2}{N-2}$, r.h.s. of \ef{eq:5.3} is non-positive.
On the other hand, 
we have from \eqref{phi1} and by the assumption $\Phi(t^2) \ge t^2 \phi(t^2)$ that
\[
\hbox{l.h.s. of \ef{eq:5.3}} \ \ge \frac{\phi_0+m}{N} \irn u^2 \,dx. \]
Since $0< m+\phi_0$, 
l.h.s. of \ef{eq:5.3} is positive if $u \not\equiv 0$.
This is a contradiction and hence $u \equiv 0$.

(ii). \ Let, again, $u$ be a solution of \eqref{eq:1}. Using \ef{eq:5.1} again, one finds that
\begin{equation} \label{eq:5.4}
 \irn \left\{ \Quas - \frac{1}{N} \quas |\n u|^2 - \frac{\phi_0}{2} u^2 \right\} \,dx \\
= \irn \left( G(u)-\frac{\phi_0}{2}u^2 \right) \,dx.  
\end{equation}
By the assumption, r.h.s. of \ef{eq:5.4} is non-positive,
while \eqref{phi1} and the fact that $\Phi(t^2) \ge t^2 \phi(t^2)$ yield that
\begin{align*}
&\Quas - \frac{1}{N} \quas |\n u|^2 - \frac{\phi_0}{2} u^2 \\
&\ge \left( \frac{u^2+|\n u|^2}{2} \right) \quas 
- \frac{1}{N} \quas |\n u|^2 - \frac{\phi_0}{2} u^2 \\
&= \frac{N-2}{2N} \quas |\n u|^2 +\frac{u^2}{2} \left( \quas - \phi_0 \right) \\
&\ge \frac{(N-2)\phi_0}{2N} | \n u |^2 \quad \hbox{a.e. in} \ \RN,
\end{align*}
showing that l.h.s. of \ef{eq:5.4} is positive if $u \not\equiv 0$.
This is a contradiction again.
\end{proof}

\begin{remark} \label{rem:5.5}
From Theorem \ref{thm:5.4} (ii), 
we see that 
a natural assumption for the existence seems to be
\[
G(\zeta) > \frac{\phi_0}{2} \zeta^2 \quad \hbox{for some} \ \zeta>0 \]
instead of \ef{g4}.
However for the moment, we don't know whether the functional $I$
has the mountain pass geometry as in Lemma \ref{lem:3.2}
under this slightly weaker assumption.
\end{remark}

\section{Existence of a ground state solution}\label{se6}

In this section, we show the existence of a ground state solution of \ef{eq:1}
under more stronger assumption \eqref{phi4}.

For this purpose, we begin with the following lemma.

\begin{lemma} \label{lem:6.1}
Assume \eqref{phi1}, \eqref{phi3} and \eqref{phi4}. 
\begin{enumerate}
\item[\rm(i)] $J_1(s,b):= \quasb b^2$ is increasing and convex
with respect to $b$ for all $(s,b) \in \R \times \R_+$.

\item[\rm(ii)] $J_2(s,b):= \Quasb - \frac{1}{N} \quasb b^2$
is increasing and convex with respect to $b$ for all $(s,b) \in \R \times \R_+$.
\end{enumerate}
\end{lemma}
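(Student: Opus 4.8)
The plan is to establish both items by direct differentiation in the variable $b$, writing $r=r(s,b):=\frac{s^2+b^2}{2}$ so that $\partial_b r=b$, and exploiting the two elementary inequalities $0\le b^2\le 2r$ and $0\le b^4\le 4r^2$, valid for every $(s,b)\in\R\times\R_+$. The only facts I would feed in are: $\phi(r)\ge\phi_0>0$ from \eqref{phi1}; $\phi(r)+2r\phi'(r)\ge\phi_0$ from \eqref{phi3}; and, from \eqref{phi4} together with \eqref{phi'neg}, the three inequalities $\phi'(r)\le 0$, $\ 3\phi'(r)+2r|\phi''(r)|\le 0$ and $\phi(r)+5r\phi'(r)-2r^2|\phi''(r)|\ge 0$.

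For (i) I would first compute $\partial_b J_1=b\bigl(2\phi(r)+\phi'(r)b^2\bigr)$. Since $\phi'(r)\le 0$ and $b^2\le 2r$, the bracket is bounded below by $2\phi(r)+2r\phi'(r)=\phi(r)+\bigl(\phi(r)+2r\phi'(r)\bigr)\ge 2\phi_0>0$, so $J_1$ is increasing in $b$. Differentiating once more gives $\partial_b^2 J_1=\phi''(r)b^4+5\phi'(r)b^2+2\phi(r)$; here I would estimate the first term by $\phi''(r)b^4\ge-|\phi''(r)|b^4\ge-4r^2|\phi''(r)|$ (using $b^4\le 4r^2$) and the second by $5\phi'(r)b^2\ge 10r\phi'(r)$ (using $\phi'(r)\le 0$ and $b^2\le 2r$), which yields $\partial_b^2 J_1\ge 2\bigl(\phi(r)+5r\phi'(r)-2r^2|\phi''(r)|\bigr)\ge 0$ directly by \eqref{phi4}.

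For (ii) I would compute $\partial_b J_2=\frac{b}{N}\bigl((N-2)\phi(r)-\phi'(r)b^2\bigr)$; since $\phi'(r)\le 0$ and $N\ge 3$ the bracket is at least $(N-2)\phi(r)\ge(N-2)\phi_0>0$, so $J_2$ is increasing in $b$. The second derivative is $N\,\partial_b^2 J_2=(N-2)\phi(r)+(N-5)\phi'(r)b^2-\phi''(r)b^4=(N-2)\phi(r)+b^2\bigl((N-5)\phi'(r)-\phi''(r)b^2\bigr)$. I would then bound the inner bracket: $-\phi''(r)b^2\ge-2r|\phi''(r)|\ge 3\phi'(r)$ (the last step by \eqref{phi4}), so $(N-5)\phi'(r)-\phi''(r)b^2\ge(N-2)\phi'(r)$; multiplying by $b^2\ge 0$ and then using $\phi'(r)b^2\ge 2r\phi'(r)$ (as $\phi'(r)\le 0$) gives $N\,\partial_b^2 J_2\ge(N-2)\bigl(\phi(r)+2r\phi'(r)\bigr)\ge(N-2)\phi_0>0$ by \eqref{phi3}.

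The computations are routine; the only genuine choice is how to split the estimates so that the structural inequalities in \eqref{phi3} and \eqref{phi4} become usable verbatim — concretely, applying $b^4\le 4r^2$ (rather than $b^2\le 2r$ twice) to the $\phi''$-term of $\partial_b^2 J_1$, and factoring $b^2$ out of the $\phi'$- and $\phi''$-terms of $\partial_b^2 J_2$ before estimating so that $3\phi'+2s|\phi''|\le 0$ can be invoked. I do not expect a real obstacle beyond sign bookkeeping, bearing in mind that $\phi''$ carries no definite sign while $\phi'\le 0$ is guaranteed; indeed, all of the monotonicity and convexity estimates ultimately reduce to one of the two cornerstone inequalities $\phi(r)+5r\phi'(r)-2r^2|\phi''(r)|\ge 0$ and $\phi(r)+2r\phi'(r)\ge\phi_0$.
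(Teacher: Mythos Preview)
Your proposal is correct and follows essentially the same approach as the paper: direct differentiation in $b$, then reducing every estimate to the structural inequalities in \eqref{phi3} and \eqref{phi4} via $b^2\le 2r$ and $b^4\le 4r^2$. Your handling of $\partial_b^2 J_1$ is in fact slightly slicker than the paper's, which splits into the cases $\phi''\ge 0$ and $\phi''\le 0$, whereas your use of $\phi'' b^4\ge -|\phi''|\,b^4\ge -4r^2|\phi''|$ works uniformly; your regrouping for $\partial_b^2 J_2$ differs cosmetically from the paper's decomposition $(N-2)(\phi+\phi'b^2)-b^2(3\phi'+\phi''b^2)$ but lands on the same two inequalities.
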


\begin{proof}
(i). \ A direct calculation shows that
\begin{align*}
\frac{\partial J_1}{\partial b} 
&= 2 \quasb b + \quasbd b^3, \\
\frac{\partial^2 J_1}{\partial b^2}
&= 2 \quasb + 5\quasbd b^2 + \quasbdd b^4.
\end{align*}
Since $\phi' \le 0$ by \eqref{phi'neg}, 
we have from \eqref{phi1} and \eqref{phi3} that
\begin{align*}
\frac{\partial J_1}{\partial b} 
&\ge 2b \left\{ \quasb + \left( \frac{s^2+b^2}{2} \right) \quasbd \right\} \\
&\ge 2b \left\{ \frac{\phi_0}{2} +\frac 12 \quasb 
+ \left( \frac{s^2+b^2}{2} \right) \quasbd \right\} \\
&\ge 2\phi_0 b>0.
\end{align*}
Moreover from \eqref{phi4}, 
we deduce the following pointwise estimate:
\begin{equation*}
\frac{1}{2} \frac{\partial^2 J_1}{\partial b^2} \ge
\begin{cases}
\quasb + 5 \left( \frac{s^2+b^2}{2} \right) \quasbd  
\ge 0 & \hbox{if} \ \phi'' \ge 0, \\
\quasb + 5 \left( \frac{s^2+b^2}{2} \right) \quasbd
+2 \left( \frac{s^2+b^2}{2} \right)^2 \quasbdd 
\ge 0 & \hbox{if} \ \phi'' \le 0.
\end{cases}
\end{equation*}

(ii). \ First from $\phi' \le 0$ by \eqref{phi'neg} , we observe that
\[
\frac{\partial J_2}{\partial b} 
= \frac{N-2}{N} \quasb b - \frac{1}{N} \quasbd b^3 
\ge \frac{N-2}{N} \phi_0 b >0. \]
Next, by a simple computation, one has
\begin{align*}
\frac{\partial^2 J_2}{\partial b^2} 
&= \frac{N-2}{N} \quasb + \frac{N-5}{N} \quasbd b^2 -\frac{1}{N} \quasbdd b^4 \\
&= \frac{N-2}{N} \left\{ \quasb + \quasbd b^2 \right\}
- \frac{b^2}{N} \left\{ 3 \quasbd + \quasbdd b^2 \right\} \\
&\ge \frac{N-2}{N} \left\{ \quasb + 2 \left( \frac{s^2+b^2}{2} \right) \quasbd \right\} \\
&\quad - \frac{b^2}{N} \left\{ 3 \quasbd + 2 \left( \frac{s^2+b^2}{2} \right) 
\left| \quasbdd \right| \right\}.
\end{align*}
Then by \eqref{phi3} and \eqref{phi4}, it holds that $\frac{\partial^2 J_2}{\partial b^2} \ge 0$.
\end{proof}

Since $J_2(s,b) > J_2(s,0)= \Phi ( \frac{s^2}{2} )$ for all $b>0$ by Lemma \ref{lem:6.1}, 
we find that
\begin{equation} \label{eq:6.1}
\irn \left\{\Quas - \frac{1}{N} \quas |\n u|^2 - \Phi \left( \frac{u^2}{2} \right) \right\} \,dx >0
\quad \hbox{for any} \ u \in \H \setminus \{ 0\}.
\end{equation}

Next for $u \in \H$, we define
\begin{align*}
P(u) &:= \irn \Quas \,dx - \irn \frac{1}{N} \quas |\n u|^2 \, dx- \irn G(u) \,dx, \\
\P &:= \big\{ u \in \H \setminus \{ 0 \} \ ; \ P(u)=0 \big\}.
\end{align*}
From \eqref{phi1}, \eqref{phi3} and \ef{g1}-\eqref{g3}, $P$ is a $C^1$-functional on $\H$.
The equation $P(u)=0$ is exactly the Pohozaev identity 
as established in Lemma \ref{lem:5.1}. 
Especially by Theorem \ref{main2}, it follows that $\P \ne \emptyset$.
The next result shows that the set $\P$ is actually a $C^1$-manifold.

\begin{proposition} \label{prop:6.2}
Assume \eqref{phi1}, \eqref{phi3}, \eqref{phi4} and \ef{g1}-\ef{g4}.
Then the set $\P$ is a co-dimension one manifold,
bounded away from zero.
Moreover $\P$ is a natural constraint for the functional $I$.
\end{proposition}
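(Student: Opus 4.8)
The strategy is the standard one for showing that a Pohozaev set is a natural constraint: verify that $P$ is a submersion on $\P$, so that $\P$ is a $C^1$ co-dimension one manifold, and then show that any critical point of $I|_\P$ is a free critical point of $I$ by exploiting a scaling along which $P$ is monotone. The scaling to use here is the one already in play in Section \ref{se3}, namely $u_\theta(x):=u(e^{-\theta}x)$, under which
\[
I(u_\theta)=e^{N\theta}\irn \Phi\!\left(\frac{u^2+e^{-2\theta}|\n u|^2}{2}\right)dx
-e^{N\theta}\irn G(u)\,dx,
\]
and one computes $\frac{d}{d\theta}I(u_\theta)\big|_{\theta=0}=P(u)$, so that $\P$ is exactly the set of $u\neq 0$ for which $\theta=0$ is a critical point of $\theta\mapsto I(u_\theta)$.

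First I would show $\P$ is bounded away from $0$: if $u\in\P$ then $P(u)=0$, and combining the definition of $P$ with \eqref{eq:6.1} (equivalently Lemma \ref{lem:6.1}(ii)) together with the estimate \eqref{eq:3.7}--\eqref{eq:3.8} on $G$ gives, after replacing $G$ by $G_1-G_2$, an inequality of the form $c\,\|u\|^2\le \e\|u\|_{L^2}^2+C_\e\|u\|_{L^{2^*}}^{2^*}$ with $c>0$; choosing $\e$ small and using Sobolev yields $\|u\|\ge\rho_0>0$ for all $u\in\P$. Next, to see that $\P$ is a $C^1$-manifold of co-dimension one, it suffices to check $P'(u)\neq 0$ on $\P$, and in fact that $P'(u)[u]<0$ there; indeed from the formula for $P$ one computes $P'(u)[u]=\frac{d}{d\theta}P(u_\theta)\big|_{\theta=0}$, and using \eqref{phi3}, \eqref{phi4} and \eqref{g2}--\eqref{g3} one shows this quantity is strictly negative at points of $\P$ — this is where Lemma \ref{lem:6.1} does the real work, since the convexity/monotonicity in $b$ of $J_1$ and $J_2$ controls the sign of the relevant derivatives. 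Equivalently, one shows that along any fixed $u\neq 0$ the map $\theta\mapsto I(u_\theta)$ has a unique critical point, which is a strict maximum, so that $P(u_\theta)$ changes sign from $+$ to $-$ exactly once; the implicit function theorem then gives a unique $\theta(u)$ with $u_{\theta(u)}\in\P$ depending $C^1$ on $u$, and $\P\cong\{u\neq 0\}$ through this retraction.

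For the natural-constraint property, let $u\in\P$ be a critical point of $I|_\P$. By Lagrange multipliers there is $\mu\in\R$ with $I'(u)=\mu P'(u)$. Testing with $u$ and using $I'(u)[u]$ together with the Pohozaev-type identity encoded by $P(u)=0$, one gets a linear relation; testing instead against the generator of the scaling, i.e. evaluating $I'(u)[v]$ with $v:=\frac{d}{d\theta}u_\theta|_{\theta=0}$ (so that $I'(u)[v]=P(u)=0$ and $P'(u)[v]=\frac{d}{d\theta}P(u_\theta)|_{\theta=0}=P'(u)[u]$), yields $0=\mu\,P'(u)[u]$. Since $P'(u)[u]<0$ on $\P$ by the previous step, we conclude $\mu=0$, hence $I'(u)=0$. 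I would then note, using the equivalence with weak solutions (Proposition \ref{prop:3.1} after the $\tilde g$ reduction) and the regularity and positivity results of Section \ref{se4}, that such $u$ is a genuine regular positive solution of \eqref{eq:1}.

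The main obstacle I anticipate is the sign computation $P'(u)[u]<0$ on $\P$: writing out $P'(u)[u]$ produces terms involving $\phi$, $s\phi'$ and $s^2\phi''$ evaluated at $\frac{u^2+|\n u|^2}{2}$, and one must organize them so that the structural inequalities in \eqref{phi3} and \eqref{phi4} — exactly the hypotheses that make $J_1(s,b)$ and $J_2(s,b)$ increasing and convex in $b$ in Lemma \ref{lem:6.1} — force the correct sign, while the contribution of $g$ is handled by \eqref{g2}--\eqref{g3} through the monotone rearrangement into $g_1,g_2$. A secondary point requiring care is that the scaling $u\mapsto u_\theta$ must be shown to act continuously on $\H$ (it does, since $\theta\mapsto u_\theta$ is continuous $\R\times\H\to\H$) and that $\P$ is weakly closed enough for the manifold structure to be global, but these are routine given the uniform lower bound $\|u\|\ge\rho_0$ already established.
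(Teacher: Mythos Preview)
Your plan has a genuine gap in two linked places. First, the identification $P'(u)[u]=\frac{d}{d\theta}P(u_\theta)\big|_{\theta=0}$ is false: the right-hand side is (formally) $P'(u)[v]$ with $v:=\frac{d}{d\theta}u_\theta|_{\theta=0}=-x\cdot\nabla u$, which is \emph{not} the same direction as $u$. If you try to work with $P'(u)[u]$ directly, you produce a term $\irn g(u)u\,dx$ that the Berestycki--Lions hypotheses \eqref{g1}--\eqref{g4} do not allow you to compare with $\irn G(u)\,dx$ (there is no Ambrosetti--Rabinowitz type inequality available), so the claimed sign $P'(u)[u]<0$ on $\P$ cannot be established. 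Lemma~\ref{lem:6.1} does not help here: its monotonicity and convexity are in $b=|\nabla u|$ (i.e.\ along the domain scaling $u_\theta$), not along the amplitude scaling $t\mapsto tu$.

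Second, if you instead test $I'(u)=\mu P'(u)$ against the scaling generator $v=-x\cdot\nabla u$ (which would indeed give, formally, $NP(u)=\mu\,\frac{d}{d\theta}P(u_\theta)\big|_{\theta=0}$ with the right-hand side strictly negative on $\P$ since $\phi'\le 0$), you face the difficulty that $v\notin H^1(\RN)$ in general, so the pairing $I'(u)[v]$ is not defined and the chain rule fails. Making this rigorous is precisely a Pohozaev-type identity for the equation $I'(u)=\mu P'(u)$, and that requires $C^1$ regularity of $u$ as a solution of this new quasilinear PDE. The paper's proof devotes a substantial step (its ``Step~2'') to exactly this: it shows that if $P'(u)=0$ (respectively $I'(u)=\mu P'(u)$) then $u$ solves a divergence-form equation, verifies uniform ellipticity using \eqref{phi3}--\eqref{phi4}, runs the regularity machinery of Section~\ref{se4}, and only then derives a second Pohozaev identity $\tilde P(u)=0$. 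Subtracting $P(u)=0$ from $\tilde P(u)=0$ yields
\[
\irn\left\{(N-2)\quas|\nabla u|^2-\quasd|\nabla u|^4\right\}dx=0,
\]
which, by $\phi'\le 0$ and \eqref{phi1}, forces $u\equiv 0$; this gives both $P'\neq 0$ on $\P$ and $\mu=0$ for constrained critical points. Your outline omits this regularity step, and without it neither the manifold nor the natural-constraint claim goes through.
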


\begin{proof} 
The proof consists of four steps.

\smallskip
\noindent
{\bf Step 1}: \ $\P$ is bounded away from zero.

For this purpose, we put $\eps = \frac{N-2}{2N}(\phi_0+m)$
and $c_0 = \min \left\{ \frac{\phi_0+m}{2}, \phi_0 \right\}$.
Then by \ef{eq:3.7}, \ef{eq:3.8}, \eqref{phi1} and \eqref{stima},
one has
\begin{align*}
P(u) &= \frac{N-2}{N}\irn \Quas \,dx 
+\frac{2}{N} \irn \left\{ \Quas - \quas \frac{| \n u|^2}{2} \right\} \,dx \\
&\quad + \irn G_2(u) \,dx - \irn G_1(u) \,dx \\
&\ge \frac{N-2}{2N} \phi_0 \| u\|^2 
+\frac{2}{N} \irn \left\{ \Quas - \left( \frac{u^2+| \n u|^2}{2} \right) \quas\right\} \,dx \\
&\quad 
+\frac{m}{2} \| u\|_{L^2}^2 - \frac{\eps}{2} \| u\|_{L^2}^2 -C \| u \|_{L^{2^*}}^{2^*} \\
&\ge \frac{N-2}{2N} \phi_0 \| \n u \|_{L^2}^2
+\frac{N-2}{4N}(\phi_0 +m) \| u\|_{L^2}^2 - C \| u \|^{2^*} \\
&\ge \frac{N-2}{2N} c_0 \| u\|^2 -C \| u \|^{2^*}
\end{align*}
for some $C>0$.
This implies that there exists $\delta>0$ such that $P(u)>0$ 
for any $u \in \H$ with $0< \| u \| < \delta$
and hence $\P$ is bounded away from zero.

\smallskip
\noindent
{\bf Step 2}: \ if $P'(u)=0$, then $u$ satisfies 
another Pohozaev type identity $\tilde{P}(u)=0$, where
\begin{align} \label{eq:6.2}
\tilde{P}(u) &:= 
\irn \Quas \,dx - \frac{2N-2}{N^2} \irn \quas | \n u|^2 \,dx \\ 
&\quad + \frac{1}{N^2} \irn \quasd | \n u|^4 \,dx -\irn G(u) \,dx. \notag
\end{align}
We note that $\quasd | \n u |^4 \in L^1(\RN)$ because $s|\phi'(s)| \le C$ for $s\in [0,+\infty)$ by \eqref{phi3}.

By a direct calculation, $P'(u)=0$ implies that
$u$ is a weak solution of the following elliptic equation of the divergence form:
\begin{equation} \label{eq:6.3}
\Div \tilde{\bm{A}} (u,\n u) + \tilde{B}(u,\n u)=0,
\end{equation} 
where for $(u,\bp) \in \R \times \RN$,
\begin{align*}
\tilde{\bm{A}}(u,\bp)
&= \frac{N-2}{N} \quasi \bp - \frac{1}{N} \quasid | \bp|^2 \bp, \quad
\tilde{\bm A} = ( \tilde{A}_i)_{i=1,\cdots,N}, \\
\tilde{B}(u,\bp) &= - \quasi u + \frac{1}{N} \quasid | \bp|^2 u +g(u).
\end{align*}
If we could establish that $u \in C^1(\RN)$, 
then we are able to apply the generalized Pohozaev identity due to \cite{DMS}
as in Lemma \ref{lem:5.1}, completing the proof of Step 2.
Thus it remains to show that any weak solution of \ef{eq:6.3} belongs to
the class $C^1(\RN)$ as in Proposition \ref{prop:4.1}.

To this aim, we investigate uniform ellipticity of the operator
$\Div \tilde{\bm A}+ \tilde{B}$.
First from $\phi' \le 0$, \eqref{phi1} and \eqref{phi3}, one finds that
\begin{align} 
\tilde{\bm A}(u,\bp) \cdot \bp
&= \frac{N-2}{N} \quasi | \bp |^2 - \frac{1}{N} \quasid | \bp |^4 
\ge \frac{N-2}{N} \phi_0 | \bp |^2, \notag 
\\
| \tilde{\bm A}(u,\bp) | 
&\le \frac{N-2}{N} \phi_1 | \bp | + \frac{2}{N} C | \bp |,\nonumber 
\\
| \tilde{B}(u,\bp) | 
&\le K |u| \quad \hbox{for} \ |u| \le M. \nonumber
\end{align}
Next, by a direct computation, we have
\begin{align*}
\sum_{i,j=1}^N \frac{\partial \tilde{A}_i}{\partial p_j} \xi_i \xi_j
&= \frac{N-4}{N} \quasid ( \bp \cdot {\bm \xi} )^2
-\frac{1}{N} \quasidd |\bp|^2 (\bp \cdot {\bm \xi})^2 \\
&\quad + \frac{N-2}{N} \quasi |{\bm \xi}|^2 
-\frac{1}{N} \quasid |\bp|^2 |{\bm \xi}|^2, \\
\frac{\partial \tilde{A}_i}{\partial u} (u,\bp)
&= \frac{N-2}{N} \quasid p_i u - \frac{1}{N} \quasidd |\bp|^2 p_i u.
\end{align*}
By using the assumptions $s | \phi'(s) | \le C$, $s^2 | \phi''(s) | \le C$
and \eqref{phi1}, one gets
\begin{equation*}
\sum_{i,j=1}^N \frac{\partial \tilde{A}_i}{\partial p_j} \xi_i \xi_j
\le C | {\bm \xi} |^2.
\end{equation*}
Moreover applying the Young inequality, we also have
\begin{align*}
\sum_{i=1}^N \left| \frac{\partial \tilde{A}_i}{\partial u} (u,\bp) \right|
&\le (N-2) \left| \quasid \right| |\bp| |u| 
+ \left| \quasidd \right| |\bp|^3 |u| \\
&\le (N-2) \left( \frac{u^2 + |\bp|^2}{2} \right) \left| \quasid \right|
+\left| \quasidd \right| \left( \frac{3}{4} | \bp|^4 + \frac{1}{4} |u|^4 \right) \\
&\le (N-2) \left( \frac{u^2 + |\bp|^2}{2} \right) \left| \quasid \right|
+3 \left( \frac{u^2 + |\bp|^2}{2} \right)^2 \left| \quasidd \right|,
\end{align*}
from which we conclude that
\begin{equation*}
\sum_{i=1}^N \left( \left| \frac{\partial \tilde{A}_i}{\partial u} \right|
+| \tilde{A}_i | \right) (1+|\bp|) +|\tilde{B} |
\le C(1+|\bp|)^2 +K|u| \le C(1+|\bp|)^2
\quad \hbox{for} \ |u| \le M.
\end{equation*}
Moreover by $\phi' \le 0$ by \eqref{phi'neg}, \eqref{phi3} and \eqref{phi4}, it follows that
\begin{align*}
&\sum_{i,j=1}^N \frac{\partial \tilde{A}_i}{\partial p_j} \xi_i \xi_j \\
&= \frac{1}{N} \left\{ (N-2) \quasi |{\bm \xi}|^2
+(N-1) \quasid (\bp \cdot {\bm \xi})^2 
- \quasid |\bp|^2 |{\bm \xi}|^2 \right\} \notag \\
&\quad -\frac{1}{N} \left\{
3 \quasid + \quasidd |\bp|^2 \right\} (\bp \cdot {\bm \xi})^2 \notag \\
&\ge \frac{N-2}{N} \left\{ \quasi 
+2 \left( \frac{u^2 + |\bp|^2}{2} \right) \quasid \right\} |{\bm \xi}|^2 \notag \\
&\quad -\frac{1}{N} \left\{ 3 \quasid + 2 \left( \frac{u^2 + |\bp|^2}{2} \right) 
\left| \quasidd \right| \right\} (\bp \cdot {\bm \xi})^2\notag  \\
&\ge \frac{N-2}{N} \phi_0 |{\bm \xi}|^2. \notag 
\end{align*}
Finally using $\phi' \le 0$, \eqref{phi1} and \ef{g1}-\ef{g3}, we obtain
\begin{equation*} 
\tilde{B}(u,\bp) \,{\rm sign} \,u
\le -( \phi_0+m-\eps) |u| + C_{\eps} |u|^{\frac{N+2}{N-2}}
\le C_{\eps} |u|^{\frac{4}{N-2}} |u|.
\end{equation*}
Under these preparations, we are able to apply the regularity result
as in Proposition \ref{prop:4.1} to obtain $u \in C^{1,\sigma}(\RN)$ 
for some $\sigma \in (0,1)$.

\smallskip
\noindent
{\bf Step 3}: \ $\P$ is a co-dimension one manifold.

For this purpose, we argue as in \cite[Lemma 1.4]{Sha}
and suppose by contradiction that there exists $u \in \P$ such that $P'(u)=0$.
Then by using two Pohozaev type identities \ef{eq:5.1} and \ef{eq:6.2},
we obtain
\[
\irn \left\{ (N-2) \quas |\n u|^2  - \quasd |\n u|^4 \right\} \,dx =0. \]
Since $\phi' \le 0$, it follows by \eqref{phi1} that
\begin{align*}
0 &\ge \irn \quasd | \n u|^4 \,dx
= (N-2) \irn \quas |\n u|^2 \,dx \\
&\ge (N-2)\phi_0 \irn |\n u|^2 \,dx,
\end{align*}
yielding that $u \equiv 0$. 
This contradicts to Step 1 
and hence $P'(u) \ne 0$ for any $u \in \P$.

\smallskip
\noindent
{\bf Step 4}: \ $\P$ is a natural constraint for $I$.

Again, we follow the argument in \cite[Theorem 1.6]{Sha} (see also \cite{PoSe}).
Let $u \in \P$ be a critical point of the functional $I|_{\P}$.
By Step 3, we are able to apply the method of Lagrange multiplier
to obtain the existence of $\mu \in \R$ such that
\[
I'(u)=\mu P'(u). \]
As a consequence, together with Step 2, $u$ satisfies the following identity:
\begin{align*}
P(u) &=\mu \tilde{P}(u) \\
&= \mu \left[ P(u) - \frac{1}{N^2} \irn \left\{ (N-2) \quas |\n u|^2
-\quasd | \n u|^4 \right\} \,dx \right].
\end{align*}
Since $P(u)=0$, this yields that
\[
\mu \irn \left\{ (N-2) \quas |\n u|^2 -\quasd | \n u|^4 \right\} \,dx =0. \]
However as we can see by the proof of Step 3, 
this is possible only if $\mu =0$.
This completes the proof.
\end{proof}

Next, let us denote $m_0$ by the ground state energy level:
\begin{equation*}
m_0 := \inf_{u \in S} I(u), \quad \hbox{where} \
S: = \{ u \in \H \setminus \{ 0\} \ ; \ I'(u)=0 \}. 
\end{equation*}
By Theorem \ref{main1}, it holds that $S \ne \emptyset$.
Moreover since the proofs of Lemmas \ref{lem:5.2} and \ref{lem:5.3}
do not rely on the radial symmetry, 
one can see that $m_0>0$.

\begin{lemma} \label{lem:6.3}
Let 
\[
b := \inf \{ I(u) \ ; \ u \in \P \}. \]
We have that $0< b\le m_0$. Moreover if $b$ is attained, then it holds that $m_0=b$.
\end{lemma}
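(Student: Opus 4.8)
The goal splits into three assertions: $b>0$, $b\le m_0$, and that attainment of $b$ forces $m_0=b$. First I would establish $b>0$. By Proposition \ref{prop:6.2}, $\P$ is bounded away from zero, say $\|u\|\ge\delta>0$ for all $u\in\P$. For $u\in\P$ one has $P(u)=0$, which lets us rewrite $I(u)$ using the Pohozaev relation: since
\[
I(u)=\irn\Quas\,dx+\irn G_2(u)\,dx-\irn G_1(u)\,dx
\]
and $P(u)=0$ gives $\irn G(u)\,dx=\irn\Quas\,dx-\frac1N\irn\quas|\n u|^2\,dx$, subtracting yields
\[
I(u)=\frac1N\irn\quas|\n u|^2\,dx+\Big(\irn\Quas\,dx-\irn G(u)\,dx-\irn\Quas\,dx+\frac1N\irn\quas|\n u|^2\,dx\Big),
\]
but more cleanly, combining $I(u)$ with $P(u)=0$ one gets an expression of the form $I(u)=\frac1N\irn\quas|\n u|^2\,dx+\irn\{\Phi(\tfrac{u^2}2)+\dots\}$; in any case, arguing exactly as in the Step 1 estimate of Proposition \ref{prop:6.2}, one bounds $I(u)$ from below by $c\|u\|^2-C\|u\|^{2^*}$ on $\P$, or alternatively one shows directly $I(u)\ge \frac1N\irn\quas|\n u|^2\,dx>0$ combined with a lower bound on $\|\n u\|_{L^2}$ coming from $\|u\|\ge\delta$ and the bootstrap in Lemmas \ref{lem:5.2}--\ref{lem:5.3}. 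Either route gives $b\ge\alpha_0>0$ for some constant $\alpha_0$ independent of $u\in\P$.

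Next, $b\le m_0$. This is the key structural point and it is the place I expect the only real subtlety. Every $u\in S$ is a weak solution of \ef{eq:1}, hence by Theorem \ref{main2} (regularity) and Lemma \ref{lem:5.1} it satisfies the Pohozaev identity \ef{eq:5.1}, i.e. $P(u)=0$, so $u\in\P$. Therefore $S\subset\P$, and taking infima gives $b=\inf_{\P}I\le\inf_{S}I=m_0$. Strictly speaking one must check $S$ consists of nonzero functions, which holds by definition, and that the regularity needed for Lemma \ref{lem:5.1} is available — it is, under \eqref{phi1} and \eqref{phi3}, which are among the standing hypotheses here.

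Finally, suppose $b$ is attained, say $I(\bar u)=b$ with $\bar u\in\P$. Since $\P$ is a natural constraint for $I$ (Proposition \ref{prop:6.2}, Step 4), the minimizer $\bar u$ is a genuine critical point of $I$, i.e. $I'(\bar u)=0$, so $\bar u\in S$. Hence $m_0=\inf_S I\le I(\bar u)=b$. Combined with $b\le m_0$ from the previous paragraph, this gives $m_0=b$. The main obstacle, such as it is, lies not in this lemma but in having set up Proposition \ref{prop:6.2} correctly so that ``natural constraint'' is genuinely available; here one simply invokes it. I would also remark that attainment of $b$ itself is not claimed in this lemma — it will be proved separately (via the Polya–Szeg\H{o} symmetrization argument and the compactness scheme of Theorem \ref{main1}) — so the statement is exactly the conditional equality $m_0=b$, and the proof above is complete once $b>0$ and $b\le m_0$ are in hand.
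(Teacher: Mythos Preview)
Your approach is correct and follows the paper's own proof almost line for line: show $b>0$ via the identity on $\P$, show $b\le m_0$ via $S\subset\P$ using the Pohozaev identity (Lemma~\ref{lem:5.1}), and use the natural-constraint property (Proposition~\ref{prop:6.2}) for the attainment step. The one place you stumble is the elementary algebra: on $\P$ the identity is simply
\[
I(u)=I(u)-P(u)=\frac{1}{N}\irn\quas|\n u|^2\,dx,
\]
an \emph{equality}, and the paper uses this directly together with \eqref{phi1} and the fact that $\P$ is bounded away from zero; your displayed computation with the bracketed term is incorrect and your ``$c\|u\|^2-C\|u\|^{2^*}$'' alternative confuses the Step~1 estimate for $P$ with an estimate for $I$, but your second alternative (the formula above plus a uniform lower bound on $\|\n u\|_{L^2}$ via the argument of Lemmas~\ref{lem:5.2}--\ref{lem:5.3}) is exactly right --- indeed slightly more careful than the paper, which writes ``hence $b>0$'' leaving that bootstrap implicit.
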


\begin{proof}
First by Proposition \ref{prop:6.2}, we find that
\[
I(u) = \frac{1}{N} \irn \quas |\n u|^2 \,dx \ge \frac{\phi_0}{N} \| \n u \|_{L^2}^2>0, \]
for any $u \in \P$, and hence $b> 0$.

If $I'(u)=0$ and $u \not \equiv 0$, 
Proposition \ref{prop:4.1} and Lemma \ref{lem:5.1} yield that $u \in \P$ and thus
\[
b \le I(u) \quad \hbox{for any} \ u \in S.\]
This implies that $b \le m_0$.

On the other hand if $b$ is attained, then there exists $u \in \P$ such that it is a minimizer of $I|_\P$. 
Then by Proposition \ref{prop:6.2}, we have that
$I'(u)=0$ and $u \not \equiv 0$, from which one concludes that $m_0 \le I(u)=b$.
Thus we obtain $m_0=b$, as claimed.
\end{proof}

\begin{lemma} \label{lem:6.4}
Assume \eqref{phi1}, \eqref{phi3}, \eqref{phi4} and \ef{g1}-\ef{g4}.
For any $u \in \H \setminus \{ 0 \}$ which satisfies
$\irn G(u)-\Phi ( \frac{u^2}{2} ) \,dx>0$, 
there exists $\theta_0>0$ such that 
$u_{\theta_0}(\cdot) = u (\cdot/\theta_0 ) \in \P$.
If further $P(u) \le 0$, then it holds that $0< \theta_0 \le 1$.
\end{lemma}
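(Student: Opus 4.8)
The plan is to run the dilation argument $u_\theta(\cdot):=u(\cdot/\theta)$, $\theta>0$, and reduce the statement to an elementary study of a single real function. Under this dilation the three terms of $P$ scale as $\theta^N$, $\theta^{N-2}$ and $\theta^N$ respectively, with the argument $\frac{u^2+|\n u|^2}{2}$ of $\Phi$ and $\phi$ replaced by $\frac{u^2+\theta^{-2}|\n u|^2}{2}$; collecting these and writing $t=\theta^{-2}$ one obtains $P(u_\theta)=\theta^N H(t)$, where, with $J_2$ the function of Lemma~\ref{lem:6.1}(ii),
\[
H(t):=\irn J_2\big(u,\sqrt{t}\,|\n u|\big)\,dx-\irn G(u)\,dx,\qquad t\ge 0.
\]
Since $u\ne 0$ implies $u_\theta\ne 0$, it suffices to find $t_0>0$ with $H(t_0)=0$ and then set $\theta_0:=t_0^{-1/2}$, which gives $u_{\theta_0}\in\P$.

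I would then establish three properties of $H$ on $[0,+\infty)$. First, \emph{continuity}: on each bounded interval $[0,T]$ the integrands are dominated, via \eqref{phi1}, by the fixed $L^1(\RN)$ functions $\frac{\phi_1}{2}(u^2+T|\n u|^2)$ and $\phi_1 T|\n u|^2$, so dominated convergence applies. Second, \emph{strict monotonicity}: by Lemma~\ref{lem:6.1}(ii), $b\mapsto J_2(s,b)$ is nondecreasing on $\R_+$ and, since $\partial_b J_2(s,b)\ge\frac{N-2}{N}\phi_0 b$, strictly increasing on $(0,+\infty)$; as $u\in\H\setminus\{0\}$ cannot be constant we have $\irn|\n u|^2\,dx>0$, hence $t\mapsto J_2(u(x),\sqrt t\,|\n u(x)|)$ is nondecreasing for a.e.\ $x$ and strictly increasing on the positive-measure set $\{|\n u|>0\}$, so $H$ is strictly increasing. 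Third, the \emph{limits}: $H(0)=\irn\{\Phi(\frac{u^2}{2})-G(u)\}\,dx<0$ by the hypothesis $\irn(G(u)-\Phi(\frac{u^2}{2}))\,dx>0$, while integrating $\partial_b J_2(s,b)\ge\frac{N-2}{N}\phi_0 b$ from $0$ to $\sqrt t\,|\n u|$ yields $J_2(u,\sqrt t\,|\n u|)\ge\Phi(\frac{u^2}{2})+\frac{(N-2)\phi_0}{2N}\,t\,|\n u|^2$, so $H(t)\to+\infty$ as $t\to+\infty$. By the intermediate value theorem there is a unique $t_0>0$ with $H(t_0)=0$, producing $\theta_0=t_0^{-1/2}>0$ with $u_{\theta_0}\in\P$.

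For the last assertion I would only observe that $H(1)=P(u)$ (the case $\theta=1$). If $P(u)\le 0$, then $H(1)\le 0=H(t_0)$, so the strict monotonicity of $H$ forces $t_0\ge 1$, i.e.\ $\theta_0=t_0^{-1/2}\le 1$; and $\theta_0>0$ in any case.

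I do not anticipate a real obstacle: the whole argument is carried by the monotonicity and coercivity of $b\mapsto J_2(s,b)$, which are precisely what Lemma~\ref{lem:6.1}(ii) and the pointwise bound $\partial_b J_2(s,b)\ge\frac{N-2}{N}\phi_0 b$ proved there provide. The only points deserving a line of care are the verification that $\irn|\n u|^2\,dx>0$ (so that $H$ really increases and blows up at $+\infty$) and the domination estimate ensuring continuity of $H$ on bounded $t$-intervals.
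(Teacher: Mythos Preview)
Your argument is correct and closely parallels the paper's, but the execution is somewhat cleaner. The paper proves existence of $\theta_0$ by studying $f(\theta):=I(u_\theta)$: it shows $f(\theta)>0$ for small $\theta$ and $f(\theta)\to-\infty$ as $\theta\to+\infty$, hence $f$ has a positive maximum at some $\theta_0$, and then observes that $f'(\theta_0)=0$ is equivalent to $P(u_{\theta_0})=0$. You instead work directly with $P(u_\theta)=\theta^N H(\theta^{-2})$ and apply the intermediate value theorem to the strictly increasing function $H$; this bypasses the detour through $I$ and, as a bonus, yields \emph{uniqueness} of $\theta_0$, which the paper's critical-point argument does not. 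For the second assertion both proofs are essentially identical: the paper compares $\int J_2(u,\theta_0^{-1}|\n u|)\,dx$ with $\int J_2(u,|\n u|)\,dx$ and invokes the monotonicity of $b\mapsto J_2(s,b)$ from Lemma~\ref{lem:6.1}(ii), which is exactly your observation $H(1)\le H(t_0)$. Your use of the quantitative bound $\partial_b J_2\ge\frac{N-2}{N}\phi_0 b$ (established inside the proof of Lemma~\ref{lem:6.1}(ii)) to get both strict monotonicity and $H(t)\to+\infty$ is a nice consolidation.
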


\begin{proof}
First we define a $C^1$-function $f(\theta)$ on $[0,+\infty)$ by
\[
f(\theta) := I(u_{\theta}) = \theta^N \irn 
\Phi \left( \frac{u^2+\theta^{-2} | \n u|^2}{2} \right) \,dx
-\theta^N \irn G(u) \,dx. \]
Since $\irn G(u)-\Phi ( \frac{u^2}{2} ) \,dx >0$, 
it follows that $f(\theta) \to -\infty$ as $\theta \to +\infty$.
Moreover \eqref{phi1} and \ef{g1}-\ef{g3} imply that
$f(\theta)>0$ for sufficiently small $\theta>0$.
Thus there exists $\theta_0>0$ such that $f'(\theta_0)=0$.
Then by a direct calculation, one finds that $P(u_{\theta_0})=0$.

Next we suppose that $P(u) \le 0$. 
Then from \ef{eq:6.1}, we have
\begin{align*}
0&< \irn \left\{ \Quas - \frac{1}{N} \quas |\n u|^2 - \Phi \left( \frac{u^2}{2} \right) \right\} \,dx \\
&\le \irn G(u)- \Phi \left( \frac{u^2}{2} \right) \,dx, 
\end{align*}
from which we obtain the existence of $\theta_0>0$ so that
$P(u_{\theta_0})=0$.
Now since $P(u_{\theta_0})=0$ and $P(u) \le 0$, one finds that
\begin{align} 
\irn \left\{ \Phi \left( \frac{u^2+\theta_0^{-2} | \n u|^2}{2} \right)
- \frac{1}{N} \phi \left( \frac{u^2+\theta_0^{-2} | \n u|^2}{2} \right)
\theta_0^{-2} | \n u|^2 \right\} \,dx 
&= \irn G(u) \,dx, \label{eq:6.11} \\
\irn \left\{ \Quas - \frac{1}{N} \quas |\n u|^2 \right\} \,dx
&\le \irn G(u) \,dx. \label{eq:6.12}
\end{align}
From \ef{eq:6.11} and \ef{eq:6.12}, it follows that
\[
\irn J_2(u, \theta_0^{-1} | \n u| ) \,dx 
\ge \irn J_2(u, | \n u|) \,dx. \]
Then by Lemma \ref{lem:6.1} (ii), we conclude that $\theta_0 \le 1$.
\end{proof}

Under these preparations, we are ready to prove Theorem \ref{main4}.

\begin{proof}[Proof of Theorem \ref{main4}]
We argue as in \cite{AW, CJS}.
By Lemma \ref{lem:6.3}, it suffices to show that 
there exists $u_0 \in \P$ such that
\[
I(u_0)= b = \min_{u \in \P} I(u). \]
Let $\{ u_n \} \subset \P$ be a minimizing sequence for $b$.
We may assume that $u_n \ge 0$ because $P(|u_n|)=P(u_n)=0$
and $I(|u_n|)=I(u_n)$. 

For all $n\ge 1$, let $u_n^*$ be the Schwarz symmetrization of $u_n $.
Then, by Lemma \ref{lem:6.1}, we are able to apply the 
generalized Polya-Szeg\"{o} inequality (see \cite[Proposition 3.11]{HK}) to obtain
\begin{equation} \label{polya}
\irn J_i ( u_n^*, | \n u_n^*|) \,dx \le \irn J_i( u_n, |\n u_n |) \,dx,
\qquad\hbox{ for }i=1,2. 
\end{equation}
Applying \ef{polya} for $i=2$, since $\irn G(u_n^*) \,dx = \irn G(u_n) \,dx$, 
we find that $P(u_n)=0$ implies $P(u_n^*) \le 0$. 
Thus by Lemma \ref{lem:6.4}, 
there exists $0< \theta_n \le 1$ such that 
$v_n:= (u_n^*)_{\theta_n} = u_n^* ( \cdot/\theta_n ) \in \P$.

Now applying the generalized 
Polya-Szeg\"{o} inequality \eqref{polya} for $i=1$, we find that
\begin{align*}
b+o_n(1) = I(u_n) &= \frac{1}{N} \irn \quasn |\n u_n|^2 \,dx \\
&\ge \frac{1}{N} \irn 
\phi \left( \frac{ (u_n^*)^2 +| \n u_n^*|^2}{2} \right) | \n u_n^*|^2 \,dx \\
&= \frac{ \theta_n^{-N+2}}{N} \irn \phi
\left( \frac{ v_n^2 +\theta_n^2 | \n v_n|^2}{2} \right) | \n v_n|^2 \,dx.
\end{align*}
Since $v_n \in \P$, $\phi' \le 0$ by \eqref{phi'neg} and $\theta_n \le 1$, one obtains
\[
b+o_n(1) \ge \frac{1}{N} \irn \phi \left( \frac{ v_n^2 +| \n v_n|^2}{2} \right) | \n v_n|^2 \,dx
=I(v_n) \ge b, \]
yielding that $\{ v_n \}$ is also a minimizing sequence for $b$.
By the radial symmetry of $v_n$, 
we can argue as in the proof of Theorem \ref{main1} to prove that
$v_n \to u_0$ in $\H$ for some $u_0 \in \P$.
This implies that 
\begin{equation} \label{eq:6.13}
I(u_0)=b=m_0= \min_{u \in S} I(u), 
\end{equation}
as claimed.
Moreover since $v_n \in H_r^1(\RN)$ and $v_n \ge 0$, 
it follows that $u_0$ is radially symmetric with respect to the origin
(up to translation) and non-negative. 
Then by Propositions \ref{prop:4.1} and \ref{prop:4.2}, 
we have $u_0 \in C^{1,\sigma}(\RN)$ for some $\sigma \in (0,1)$ and positive on $\RN$,
finishing the proof.
\if0
Finally, we show that any ground state solution of \ef{eq:1} is 
of the class $C^{1,\sigma}(\RN)$ for some $\sigma \in (0,1)$, 
radially symmetric with respect to some point and with fixed sign on $\RN$.
To this aim, let $w \in \H$ be a ground state solution of \ef{eq:1}.
The regularity of $w$ follows by Proposition \ref{prop:4.1}.
Moreover as we have observed above, 
one has 
\[
0=P(w)=P(|w|) \quad \hbox{and} \quad m_0=I(w)=I(|w|), \]
from which we may assume that $w \ge 0$.
Then by Proposition \ref{prop:4.2}, it follows that $w>0$ in $\RN$.
Finally letting $w^*$ be the Schwarz symmetrization of $w$ and
arguing similarly as above, 
we can see that $\tilde{w}:= w^*(\cdot/\theta_0 )$ for some $0< \theta_0 \le 1$
satisfies $\tilde{w} \in \P$ and
\[
m_0= I(w) \ge I(\tilde{w}) \ge m_0. \]
This implies that $w$ can be assumed to be radially symmetric, with respect to some point.
finishing the proof. \fi
\end{proof}

\begin{remark} \label{rem:6.5}
By the variational characterization \ef{eq:6.13}, the oddness of $g$ in \ef{g1}
and Proposition \ref{prop:4.2}, 
we can see that any ground state solution $w$ of \ef{eq:1} has fixed sign on $\RN$.

One also expects that any ground state solution $w$ of \ef{eq:1} 
is radially symmetric with respect to some point as in \cite{AW, CJS}.
But for the moment, we are not able to prove it.
Actually for this purpose, we first need that the function $J_1(s,b)$ defined in 
Lemma \ref{lem:6.1} is strictly convex with respect to $b$,
which follows by assuming that
\[
0<\phi(s)+5s \phi'(s) -2s^2 |\phi''(s)| \quad \hbox{for all} \ s\in [0,+\infty). \]
Then one can apply the case of equality for the generalized Polya-Szeg\"o inequality to $J_1$
{\rm(}\cite[Theorem 2.11 and Corollary 2.12]{HK}{\rm)}, 
showing that $w=w^*$ a.e. in $\RN$ provided that
\begin{equation} \label{eq:6.14}
{\mathcal L} \{ x \in \RN \ ; \ 0<w^*(x)< {\rm ess\,sup} \, w \quad 
\hbox{and} \quad \nabla w^*(x)=0 \} =0.
\end{equation}
{\rm(}See also \cite[Theorem 1.1]{BZ} and \cite[Corollary 2.33]{Ka}.{\rm)}
Especially if $w$ is analytic, then \ef{eq:6.14} can be established.
We also note that one cannot apply the symmetry result due to \cite{M}
because of the loss of a variational characterization like \ef{eq:4} in our problem.
\end{remark}

\subsection*{Acknowledgment}
The first author is supported by PRIN 2017JPCAPN 
{\em Qualitative and quantitative aspects of nonlinear PDEs}.
The second author is supported by JSPS Grant-in-Aid for Scientific Research (C) (No. 18K03383).

\end{document}